\definecolor{ddorange}{rgb}{1,0.5,0}
\definecolor{ddcyan}{rgb}{0,0.2,1.0}
\newcommand{\BBB}{\color{black}}
\newcommand{\CCC}{\color{black}}
\newcommand{\EEE}{\color{black}}
\newcommand{\C}{{\mathbb C}}
\newcommand{\N}{{\mathbb N}}
\newcommand{\R}{{\mathbb R}}
\newcommand{\EE}{\mathrm{E}}
\newcommand{\En}{E}
\newcommand{\A}{\mathrm{A}}
\newcommand{\tQ}{{\widetilde{Q}}}
\renewcommand{\Cap}{\mathrm{Cap}}
\newcommand{\Rn}{{\R}^n}
\newcommand{\Mb}{{M_b}}
\newcommand{\Mnn}{{\mathbb{M}^{n\times n}_{sym}}}
\newcommand{\MD}{{\mathbb M}^{n{\times}n}_D}
\newcommand{\dx}{{\, \mathrm{d}x}}
\newcommand{\dH}{{\, \mathrm{d}{\mathcal H}^{n-1}}}
\newcommand{\tr}{{\rm tr}}
\newcommand{\sm}{\setminus}
\newcommand{\hn}{{\mathcal H}^{n-1}}
\newcommand{\Ln}{{\mathcal{L}}^n}
\newcommand{\QQ}{{\mathcal Q}}
\newcommand{\LL}{{\mathcal L}}
\newcommand{\PP}{{\mathcal P}}
\newcommand{\HH}{{\mathcal H}}
\newcommand{\WW}{{H^1(\Omega)}}
\newcommand{\da}{{\| \nabla \alpha \|_{2}^{2}}}
\newcommand{\wtos}{\mathrel{\mathop{\rightharpoonup}\limits^*}}
\newcommand{\dom}{\partial \Omega}
\newcommand{\dod}{\partial_D \Omega}
\newcommand{\don}{\partial_N \Omega}
\newcommand{\diver}{\mathrm{div}\,}
\newcommand{\Sn}{\mathbb{S}^{n-1}}
\newcommand{\xy}{^\xi_y}
\newcommand{\MbDd}{{M_b(\Omega \cup \dod; \Mnn)}}
\newcommand{\Mbu}{{M_b(B_1; \Mnn)}}
\newcommand{\Lnn}{{L^2(\Omega; \Mnn)}}
\newcommand{\Lnnu}{{L^2(B_1; \Mnn)}}
\newcommand{\wto}{\rightharpoonup}
\newcommand{\wt}{\widetilde}
\newcommand{\V}{{\mathcal{V} }}
\newcommand{\WWi}{{H^1(\Omega; [0,1])}}
\newcommand{\tki}{t_k^i}
\newcommand{\tkim}{t_k^{i-1}}
\newcommand{\wki}{w_k^i}
\newcommand{\uki}{u_k^i}
\newcommand{\pki}{p_k^i}
\newcommand{\uk}{u_k}
\newcommand{\aki}{\alpha _k^i}
\newcommand{\ot}{{[0, T]}}
\newcommand{\ol}{\overline}
\newcommand{\E}{{\mathcal{E}}}
\newcommand{\mres}{\mathbin{\vrule height 1.6ex depth 0pt width
0.13ex\vrule height 0.13ex depth 0pt width 1.3ex}}
\theoremstyle{plain}
\newtheorem{thm}{Theorem}[section]
\newtheorem{lemma}[thm]{Lemma}
\newtheorem{prop}[thm]{Proposition}
\theoremstyle{definition}
\newtheorem{rem}[thm]{Remark}
\theoremstyle{remark}
\newenvironment{taggedsubequations}[1]
 {%
  \addtocounter{equation}{-1}%
  \begin{subequations}%
  \def\@currentlabel{#1}%
  %
 }
 {\end{subequations}}
\numberwithin{equation}{section}
\begin{document}

\nocite{*}
\title[Energetic solutions for a coupled plasticity--damage model for geomaterials]{Energetic solutions for the coupling of associative
plasticity with damage in geomaterials}
\author{Vito Crismale}
\date{}
\begin{abstract}
\small{We prove existence of globally stable quasistatic evolutions, referred to as energetic solutions, for a model proposed by Marigo and Kazymyrenko in 2019. The behaviour of geomaterials under compression is studied through the coupling of Drucker-Prager plasticity model with a damage term tuning kinematical hardening. This provides a new approach to the modelling of geomaterials, for which non associative plasticity is usually employed. 
The kinematical hardening is null where the damage is complete, so there the behaviour is perfectly plastic. We analyse the model combining tools from the theory of capacity and from the treatment of linearly elastic materials with cracks.
}
\end{abstract}
\maketitle
{\small
\keywords{\noindent {\bf Keywords:}
variational models, quasistatic evolution, energetic solutions,
Drucker-Prager elasto-plasticity, kinematical hardening, damage models, complete damage, brittle fracture.
}
\par
\subjclass{\noindent {\bf 2010 MSC:}
74C05, 
74R05,  
74G65, 
35Q74, 
49J45.  %
}
}
\setcounter{tocdepth}{1} 
 \tableofcontents
 
 \section{Introduction}
 
The theory of elasto-plasticity provides a common framework to model 
 the behaviour of 
materials displaying permanent deformations when a critical stress threshold is reached, even if these deformations could be caused by
many different physical mechanisms:
for instance, for crystalline materials, such as metals, plastic deformation is mainly due to dislocations, atomic defects inside the lattice, while for granular materials
like concrete, rocks, and soils it is mostly the result of relative sliding of the 
microparticles composing them.
Such intrinsic difference between materials
 reflects in the choice
 of the elasto-plastic model to adopt;
in particular, granular materials (we refer to them also as geomaterials) usually undergo permanent volumetric changes, in contrast to crystalline materials. 

The general setting of elasto-plasticity, in small-strain assumptions,
is based on an additive decomposition of the \emph{total strain} $\EE u \in \Mnn$, that is the symmetrized gradient of the \emph{displacement field} $u\colon \Omega\to \Rn$, $\Omega\subset\Rn$ 
 being the \emph{reference domain}, into \emph{elastic strain} and \emph{plastic strain}:
\[
\EE u=e+p.
\]
The \emph{stress tensor} $\sigma\colon \Omega\to \Mnn$ depends linearly on $e$ according to Hooke's law 
\[
\sigma=\C e,
\]
$\C$ being the fourth order positive definite Hooke's tensor, it is constrained to lie in a fixed closed and convex set $K \subset \Mnn$, referred to as \emph{constraint set}, and satisfies equilibrium conditions involving the external loads.
 When $\sigma$ is in the interior of $K$, the deformations are reversible (after a loading cycle the material has the same configuration), so the behaviour is elastic and no plasticity is produced. Conversely, a plastic flow may be produced when $\sigma \in \partial K$ (that is, $\sigma$ reaches the \emph{yield surface}): if plasticity flows in the cone of normal directions to $K$ at $\sigma\in \partial K$, that is $\dot{p}\in N_K(\sigma)$, the plasticity is said \emph{associative}, otherwise \emph{non associative} (e.g., if $\dot{p}\in N_{\widetilde{K}(\sigma)}(\sigma)$, for $\sigma \in \partial K$ and $K,\,\widetilde{K}(\sigma)\subset \Mnn$ different closed convex sets).

Associative plasticity is largely employed for crystalline materials. Moreover, these materials usually do not develop permanent volumetric changes: then 
$\frac{\mathrm{tr}\,p}{n}$, the hydrostatic part of $p$ related to irreversible volumetric deformation, is null and $K$ does not depend on the \emph{mean stress} $\sigma_m:=\frac{\mathrm{tr}\sigma}{n} \in \R$, but only on $\sigma_D:=\sigma-\sigma_m\, \mathrm{Id} \in \MD$  (here $\mathrm{tr}$ denotes the trace of a matrix and $\MD$ the deviatoric matrices, that is the matrices with null trace). For these materials thus $K\subset \MD$, and the corresponding associative models are said
of \emph{Prandtl-Reuss type} (we refer to \cite{Suq81, Mora} for their reference mathematical treatment).

Differently, granular materials display volumetric plastic deformations, depending on the hydrostatic pressure. In fact, an irreversible rearrangement of the microparticles may be caused by  applying a triaxial test, namely a shear compression plus a compression normal to the shear plane; as a result, for geomaterials the volume could increase even in compression, providing an example of \emph{dilatance}. In the shear compression, the volume may increase with that of the internal voids between the microparticles, interpreted as microcracks, due to a less efficient particle organisation; in the normal compression a part of these microcracks may be closed, preventing a free sliding and thus the free relaxation to the initial configuration in unloading.

Most of the models for geomaterials are formulated in the realm of non associative plasticity, see e.g.\ \cite{KM1, KM2, KM3, UllWamAleSamDegFra21}. (We refer the reader to the variational treatment of non associative plasticity in \cite{DMDSSol10, BabFraMor12, FraMor17} and to the related \cite{UllAleWamDegFra20}). The reason to put aside associative plasticity is that, when modifying Prandtl-Reuss plasticity accounting for the dependence of $K$ on $\sigma_m$, too important dilatancy effects appear, regardless of the form of $K$.

A different approach to the modelling of geomaterials
 has been 
 proposed by Marigo and Kazymyrenko in \cite{KazMar18}. Basing on a micromechanical analysis, they consider the coupling of an associative elasto-plastic model with damage, tuning kinematical hardening. 
The \emph{damage variable} $\alpha\colon [0,T]\to [0,1]$ is associated to the distribution density
of closed microcracks, and reflects into an internal blocked energy depending on plastic strain, 
being related
 to irreversible deformations.  
Assuming Coulomb law for the sliding with friction between the lips of closed microcracks, plasticity follows a Drucker–Prager law, that is
\[
K=\{\sigma\in \Mnn \colon \tau \sigma_m + |\sigma_D| -k\leq 0\},
\] 
for $\tau$, $k>0$. Then the plastic dissipation from a plastic strain $q$ is the relaxation of 
\[
\int_\Omega H(p(t)-q) \dx,\qquad H(\xi)=\sup_{\sigma\in K} \sigma \colon \xi
\]
for $p(t)$ 
a 
bounded Radon measure and the
 stored energy at time $t$ is the sum of the elastic energy of the sound material plus the kinematical hardening term (depending on $\alpha$)
\[
\QQ(e(t))+\tQ(\alpha(t),p(t))= \frac{1}{2}\int_{\Omega} \C e(x) : e(x) \dx + \int_\Omega \C_1(\alpha(x)) p(x) \colon p(x) \dx,
\]
for $\C_1$ a fourth-order tensor
 positive definite except for $\alpha=0$, corresponding to the fully damaged material, while formally  $\C_1$ is 
 $+\infty$ for $\alpha=1$, meaning that the sound material does not display plasticity.
The growth of the microcracks is modelled by the terms usually present in gradient damage models, corresponding to Ambrosio-Tortorelli approximation for a fixed thickness parameter
\[
D(\alpha(t))+\|\nabla \alpha(t)\|_{L^2}^2,
\]
cf.\ \cite{CRASII}, \cite[Section~4]{AMV}, \cite[Remark~3.1]{KazMar18}, and the coupling between Coulomb law for the sliding of crack lips with Griffith law for crack propagation in \cite{AndBamMar86, ZhuShaKon11}. Damage is assumed to be an irreversible process, consequently $\alpha$ is nonincreasing in time. The reference experiment is the triaxial test; here the external loading is
represented by a suitable duality between the displacement and a loading term $\LL(t)$. 
Remarkably, the simulations discussed in \cite{KazMar18} reproduce well the phenomena expected for geomaterials, in particular the dilatance.

The main result of the present work is the proof of existence of quasistatic evolutions for the model of \cite{KazMar18}, fulfilling the notion of
 \emph{energetic solutions} \`a la Mielke-Theil (cf.\ \cite{MieThe99MMRI, MieThe04RIHM, MieRou}). This notion is based on a global stability condition (qs1), which prescribes that at each time the current configuration minimises the sum of the total internal energy and the dissipation potential, and on an energy-dissipation balance (qs2) between the total variation in time of the internal energy, the total dissipated energy, and the work of the external loadings.
The precise assumptions and the functional framework of the existence result stated below, are described in Section~\ref{Sec:2}.  
\begin{thm}\label{thm:2305211857}
Assume the conditions \eqref{do}, \eqref{eq:D}, \eqref{eq:Celast}, \eqref{eq:C}, \eqref{0302211716}, \eqref{eq:data}, \eqref{2305211859}, and the definitions in Section~\ref{Sec:2}. Then there exists an evolution $\alpha\colon \ot \to H^1(\Omega)$, $u\colon \ot \to BD(\Omega)$, $e\colon \ot \to \Lnn$, $p\colon \ot \to \MbDd$, such that 
 \begin{itemize}
 \item[(qs0)] $\alpha(s)\leq \alpha(t)$ a.e.\ in $\Omega$ for every $0\leq s \leq t\leq T$;
\item[(qs1)] $(u(t), e(t), p(t))\in \A(w(t))$  for every $t\in \ot$ and
 \begin{equation*}
\begin{split}
\QQ(e(t))& +D(\alpha(t))+\| \nabla \alpha(t)\|_{L^2}^2 + \tQ(\alpha(t),p(t))  - \langle \LL(t), u(t) \rangle \\& \leq  \QQ(\eta)+D(\beta)+\| \nabla \beta\|_{L^2}^2 + \tQ(\beta, q)  +\HH(q-p(t)) - \langle \LL(t), v \rangle
\end{split}
\end{equation*}
for every $\beta \leq \alpha(t)$, $\beta\geq 0$, $(v,\eta,q)\in \A(w(t))$;
\item[(qs2)] $p\colon \ot \to \MbDd$ has bounded variation and for every $t\in\ot$
\begin{equation*}
\begin{split}
\QQ(e(t))& +D(\alpha(t))+\| \nabla \alpha(t)\|_{L^2}^2 + \tQ(\alpha(t),p(t)) + \V_\HH(p;0,t) - \langle \LL(t), u(t) \rangle \\
& = \QQ(e_0) +D(\alpha_0)+\| \nabla \alpha_0\|_{L^2}^2 + \tQ(\alpha_0,p_0) - \langle \LL_0, u_0 \rangle +\int_0^{t} \langle \sigma(s), \EE \dot{w}(s) \rangle \,\mathrm{d}s \\& \hspace{1em} - \int_0^{t} \{ \langle \dot{\LL}(s), u(s) \rangle + \langle \LL(s), \dot{w}(s) \rangle \} \,\mathrm{d}s.
\end{split}
\end{equation*} 
\end{itemize}
Moreover,  for every $t\in\ot$ except at most countable many, it holds that: 
$\alpha$ and $e$ are strongly continuous from $[0,T]$ into $H^1(\Omega)$ and $\Lnn$; $u$ and $p$ are weakly$^*$ continuous in $BD(\Omega)$ and $\MbDd$; $\sqrt{\C_1(\alpha(\cdot))}p(\cdot)$ is strongly continuous in $L^2(\Omega;\Mnn)$.
\end{thm}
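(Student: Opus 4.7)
The approach is the standard Mielke-Theil scheme of time-incremental minimisation, adapted to the degeneracy that $\C_1(\alpha)$ vanishes on $\{\alpha=0\}$, where $p$ is only a bounded measure. The plan is to build piecewise-constant interpolants $(\alpha_k,u_k,e_k,p_k)$ in time from solutions of discrete minimum problems, derive uniform bounds and a discrete energy inequality, pass to the limit via Helly-type selection, and verify (qs0)--(qs2) together with the stated continuity for the limit. For $k\in\N$ I partition $\ot$ into $k$ equal subintervals with nodes $\tki$ and, given $(\alpha_k^{i-1},u_k^{i-1},e_k^{i-1},p_k^{i-1})$, define $(\aki,\uki,\eki,\pki)$ as a minimiser of
\[
(\beta,v,\eta,q) \mapsto \QQ(\eta)+D(\beta)+\|\nabla\beta\|_{L^2}^2 + \tQ(\beta,q) + \HH(q-p_k^{i-1}) - \langle \LL(\tki),v\rangle
\]
over $(v,\eta,q)\in\A(w(\tki))$ and $0\leq \beta\leq \alpha_k^{i-1}$. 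Existence at each step follows by the direct method, once one has lower semicontinuity of $\tQ$ under the appropriate weak convergence. Testing the minimality at step $i$ against the previous state translated by $w(\tki)-w(\tkim)$ and summing over $i$ yields a discrete energy inequality, which provides uniform bounds on $\alpha_k$ in $L^\infty(\ot;\WW)$, on $e_k$ in $L^\infty(\ot;\Lnn)$, on $u_k$ in $L^\infty(\ot;BD(\Omega))$, and on the $\HH$-variation of $p_k$ over $\ot$.

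\textbf{Compactness.} Since $\aki$ is pointwise monotone in $i$ and bounded in $H^1$, Helly's theorem supplies a subsequence (not relabelled) and a limit $\alpha\colon\ot\to\WW$ with $\alpha_k(t)\wto\alpha(t)$ in $H^1(\Omega)$ for every $t\in\ot$; the monotonicity is inherited and gives (qs0). A second Helly-type argument, adapted to measures of bounded $\HH$-variation as in the linearised plasticity literature, yields a pointwise-in-time limit $p\colon\ot\to\MbDd$ with $\V_\HH(p;0,t)\leq\liminf_k \V_\HH(p_k;0,t)$. Ellipticity of the minimum problem in $(v,\eta,q)$ then selects $u(t)\in BD(\Omega)$ and $e(t)\in\Lnn$, and passage to the limit in the admissibility constraint gives $(u(t),e(t),p(t))\in\A(w(t))$.

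\textbf{Stability (qs1).} This is the main obstacle. Given an admissible competitor $(\beta,v,\eta,q)$ at time $t$ with $\beta\leq\alpha(t)$, one must construct a recovery sequence $(\beta_k,v_k,\eta_k,q_k)$ admissible for the discrete problem at some $\tki\to t$. The natural choice $\beta_k=\beta\wedge\aki$ is damage-admissible and converges strongly in $H^1$ to $\beta$ by monotonicity. The delicate point is the transfer of $q$: on $\{\alpha(t)=0\}$, where $q$ is only a measure and $\C_1(\beta)=0$, $q$ may concentrate while the discrete $\C_1(\aki)$ is still positive, so $\tQ(\beta_k,q)$ need not be well defined. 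Following the strategy developed for linearly elastic materials with cracks, I would truncate $\beta$ at level $1/j$, localise $q$ on $\{\beta>1/j\}$ using $H^1$-quasi-continuous representatives, and glue with $\pk$ on the complementary set; a diagonal extraction in $j$ and $k$, combined with Ioffe-type lower semicontinuity for the weighted quadratic $\tQ$, Reshetnyak lower semicontinuity for $\HH$, and strong $H^1$ convergence of $\beta_k$, should then yield the limit stability inequality.

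\textbf{Energy balance and continuity.} The upper bound in (qs2) is obtained by passing to the liminf in the discrete energy inequality and using that the Riemann sums of $\langle\sigma_k,\EE\dot w\rangle$ and of $\langle\dot\LL,u_k\rangle+\langle\LL,\dot w\rangle$ converge to the corresponding time integrals. The matching lower bound follows by the Mielke-Theil chaining argument, testing (qs1) along an arbitrary finite partition of $[0,t]$ and exploiting the convexity of $\QQ+\tQ(\alpha(t),\cdot)$ to absorb the quadratic remainders. The continuity properties at cocountable times, and in particular the strong convergence of $\sqrt{\C_1(\alpha(\cdot))}p(\cdot)$ in $L^2(\Omega;\Mnn)$, then follow from the energy equality, the monotonicity of $\alpha$, and the standard strong-from-weak upgrade at continuity points of the total energy.
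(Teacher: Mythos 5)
Your overall scheme (time‐incremental minimisation, Helly‐type compactness, stability passing, two‐sided energy estimate, strong continuity at cocountable times) is the one the paper follows; the identification of $u(t),e(t)$ via the unique minimiser of the quadratic reduced problem is also correct and is what the paper does. The energy balance and continuity paragraphs are in line with the paper's argument.

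The stability step (qs1) is where your sketch has genuine gaps, and it is precisely the place where the paper's proof is nonstandard.
First, you misidentify the obstacle: with the competitor $\beta_k := \beta\wedge\alpha_k$ one has $\beta_k\leq\beta$, hence $\{\widetilde{\beta_k}>0\}\subset\{\widetilde\beta>0\}$ up to capacity‐null sets, so if $\tQ(\beta,q)<\infty$ then $|q^s|(\{\widetilde{\beta_k}>0\})=0$ and $\tQ(\beta_k,q)$ \emph{is} well defined — your worry about $q$ concentrating where $\C_1(\alpha_k^i)>0$ does not materialise. The paper records exactly this in \eqref{2105211733}.
Second, and more importantly, your construction (truncate at $1/j$, localise $q$ on $\{\beta>1/j\}$, glue with $p_k$) does not control the plastic dissipation term. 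In the discrete stability you must estimate $\HH(q_k-p_k)$ and recover $\HH(q-p(t))$ in the limit; with a gluing you would get $\HH((q-p_k)\chi_{\{\beta>1/j\}})$, which has no useful limit because $\HH$ is only lower, not upper, semicontinuous under weak$^*$ convergence — and in a recovery sequence you need the upper bound, so invoking ``Reshetnyak lower semicontinuity for $\HH$'' is the wrong direction. The paper avoids this entirely by the Mielke--Theil mutual recovery choice $q_k:=q-p_\infty+p_k$ (together with $v_k:=v-u_\infty+u_k$, $\eta_k:=\eta-e_\infty+e_k$), for which $q_k-p_k=q-p_\infty$ is constant in $k$, so $\HH(q_k-p_k)=\HH(q-p_\infty)$ requires no limit at all. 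After this cancellation the surviving technical term is $\langle\C_1(\beta_k)(q-p_\infty+2p_k),q-p_\infty\rangle$, and the paper proves its convergence via Lahti's capacitary estimate combined with $GSBD$ compactness (passing through the sets $E_{\varepsilon,\delta}=\{\beta>\delta\}\setminus D_\varepsilon$); your proposal does not engage with this term.
Third, your invocation of ``Ioffe‐type lower semicontinuity for the weighted quadratic $\tQ$'' cannot be applied directly: the weight $\C_1(\alpha_k)$ is only weakly $H^1$ convergent (hence $L^1$, not uniformly, convergent), it degenerates at $\alpha=0$, and $p_k$ is merely a measure where it degenerates, so the Ioffe--Olech theorem does not apply to the pair $(\alpha_k,p_k)$ as is. This is exactly the content of Theorem~\ref{thm:1505212332}, proven by a blow‐up combined with Lahti's lemma and $GSBD$ lower semicontinuity, plus a slicing argument to exclude concentration on $\{\widetilde\alpha>0\}$; your proposal acknowledges that such a lemma is needed but does not supply or even outline the argument, which is where the real work of the paper resides.
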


We point out that we are exactly in the setting proposed in \cite{KazMar18} for the variational formulation. In particular, $\C_1(0)=0$ and the gradient damage term is taken in the $L^2$ norm. These constitutive choices give weaker regularity than in other coupled damage-plasticity models, which requires a careful mathematical treatment.

The degeneracy of $\C_1$ at the maximal damage state corresponds to complete damage. For this model, it means that in $\{\alpha=0\}$ the material satisfies Drucker-Prager perfect plasticity and $p$ is a bounded Radon measure, since it is controlled only linearly.
 Such assumption is then crucial to observe plastic shear bands, otherwise one would have a kinematical hardening even in $\{\alpha=0\}$ and no concentration of plastic strain. 
Furthermore, the $L^2$ damage gradient term is the one usually adopted in simulations and in mechanical models, e.g.\ \cite{CRASII, AMV, UllWamAleSamDegFra21}; the fact that the damage field is not continuous but only in $H^1$ is a source of analytical issues, since it in general multiplies some functions of the measure $p$, which is in duality with continuous fields.
We remark that it could be also interesting for future applications to deal with the case where the switching between perfect plasticity and plasticity with hardening is governed by the values of a Sobolev field, as in the present framework.

In previous treatments of quasistatic evolutions for coupled small-strain plasticity-damage models \cite{Cri16, Cri17, CriOrl18} and in many elasticity-damage models (see e.g.\ \cite{MieRou06, Tho13, KRZ13a, KneNeg17}) the damage is assumed incomplete: in those works the Hooke's tensor $\C$, depending on $\alpha$, is positive definite 
for any $\alpha\in [0,1]$. 
Here $\C$ is independent of $\alpha$ but the relevant mixed term including the damage variable degenerates. Restricting to elasticity-damage models, in \cite{BouMieRou09, MieRouZem07, Mie11, HeiKra15} some cases of complete damage are addressed, and also in the dynamic plasticity-damage model \cite{DavRouSte19} Hooke's tensor is only positive semidefinite. 
Moreover, in plasticity-damage models the $L^2$ regularisation in the damage gradient is at the moment not enough, unless having the presence of strain gradient \cite{Cri17} or further assumptions on elastic strain \cite{CriOrl20}: an $L^p$ regularisation is required for $p>n$ \cite{Cri16, DavRouSte19} or for $p=n$ \cite{CriOrl18}.

Following De Giorgi's Minimising Movement approach to quasistatic evolutions \cite{Amb95}, time-continuous evolutions are approximated by discrete-time ones, constructed by solving incremental minimisation problems. 
The interaction between damage in $H^1$ and plasticity plays a crucial role in the lower semicontinuity of the kinematical hardening term (see Theorem~\ref{thm:1505212332}), needed in incremental minimisation problems and both in passing to the limit  the discrete stability to get (qs1) and in the lower energy inequality in (qs2); moreover, it enters also in a continuity condition to get (qs1), see Theorem~\ref{thm:1905212052}.

Theorems~\ref{thm:1505212332} and \ref{thm:1905212052} are the main intermediate results needed to prove Theorem~\ref{thm:2305211857}: their proof strategies employ fine properties of Sobolev functions, derived from capacitary arguments, combined with tools from free discontinuity problems in linear elasticity. Indeed, we observe that even the definition of $\tQ$ is well posed since $p$ could concentrate on sets of dimension at least $n{-}1$, and the capacitary precise representative $\widetilde{\alpha}$ of $\alpha$ is well defined up to sets of dimension $s$ for any $s>n{-}2$.

In Theorem~\ref{thm:1505212332} we use a blow-up procedure, at every $x_0 \in \Omega$ with $\alpha(x_0)>0$. If the rescaled damage variables $\beta_k$ converge uniformly to $\alpha(x_0)>0$, then the rescaled plastic strains $q_k$ (for which $\int_{B_1} \C_1(\beta_k)q_k\colon q_k \,\mathrm{d}x$ is bounded) would be bounded in $L^2$ and the semicontinuity would be direct from Ioffe-Olech Theorem. Since $\beta_k$ converge weakly in $H^1$ to $\alpha(x_0)$, the convergence is uniform up to an exceptional set of small $s$-capacity, $s\in (1,2)$. The idea is to combine the fact that the exceptional set could be taken with small perimeter, following Lahti \cite{Lah17}, with the theory of $GSBD$ functions \cite{DM13}, namely those corresponding to displacements for linearly elastic materials outside a $(n{-}1)$-dimensional crack. We see the boundary of the exceptional set of small capacity as a discontinuity surface of the rescaled displacements $v_k$, that are regarded as functions equibounded in $GSBD$, since the absolutely continuous parts of their symmetric gradients are bounded in $L^2$ and their discontinuity sets have bounded surface measure. Using lower semicontinuity results in $GSBD$ we get lower semicontinuity outside the exceptional set.

Furthermore, to exclude that the measure $p$ concentrates on $\{\widetilde{\alpha}>0\}$, we resort to a slicing argument. Also here, the result would be direct under uniform convergence of damage $\alpha_k$ to $\alpha$, using that the sets $\{\alpha>\delta\}$ are open and well approximated by $\{\alpha_k>\delta\}$.
We stress the fact that in non associative plasticity regularity assumptions are needed even to solve the minimisation problems \cite{BabFraMor12} or to prove existence of weak evolutions \cite{FraMor17}: these are obtained through convolutions with fixed kernels. 

The $GSBD$-lower semicontinuity together with Lahti's capacitary estimate is applied also in Theorem~\ref{thm:1905212052}, without blow-up. 
We believe that this strategy could be useful in other problems in plasticity-damage and for other couplings in different contexts.

Another issue arising when following the framework of \cite{KazMar18} is that the triaxial test requires mixed Dirichlet-Neumann boundary conditions. These mixed conditions could be imposed trough a suitable notion of stress-plastic strain duality, well defined when the constraint set $K$ is bounded in the direction of deviatoric matrices, see e.g.\ \cite{Mora}. Conversely, this is a problem when dealing with geomaterials, so that fully Dirichlet boundary conditions are usually considered in mathematical works, see \cite[Introduction]{BabMor15}.
Here, besides safe load condition, we require a further assumption on the external loads; remarkably, this is satisfied in the case of triaxial test. These assumptions guarantee coercivity in $p$ in the incremental minimisation problems and permit us to obtain a weak formulation of evolution accounting for Neumann conditions in an integral form.
In the last part of the paper we show that, under regularity assumptions on the evolution, one recovers the differential properties in \cite{KazMar18}, to which we refer for the full set of conditions and for further details on their mechanical interpretation.

We notice that we follow \cite{KazMar18}, but with the same techniques one could treat the case of a 
$L^\gamma$ gradient damage regularisation, $\gamma>1$, in place of $L^2$. Moreover, taking $\C_1(0)$ positive definite or $\C_1(1)<+\infty$ simplifies the analysis. 

One could also repeat the same arguments for the existence of evolutions when adding a plastic strain gradient term of the type $\|\nabla p\|_{L^2}^2$ in the total energy (starting by minimising the functional in \eqref{1005211029} plus $\|\nabla p\|_{L^2}^2$); actually, this term simplifies the analysis since it improves the regularity of $p$.
Moreover, a strain gradient regularisation seems also suitable to improve time regularity.
In the current setting, quasistatic evolutions
are not proven to be absolutely continuous in time, as usual in coupled plasticity-damage models. Different notions of evolutions have been proposed to deal with time discontinuity, such as the one of $BV$ solutions \cite{MieRosSav13}, obtained trough a vanishing-viscosity technique à la Efendiev-Mielke \cite{EfeMie06} (see \cite{CriLaz16, CriRos21} for plasticity-damage models).  We plan to address these variants of the model 
in future works.


%
%

 \section{Preliminaries}\label{Sec:2}

 \medskip

\noindent {\bf{Mathematical preliminaries.}}
For every $x\in \Rn$ and $\varrho>0$, let $B_\varrho(x) \subset \Rn$ be the open ball with center $x$ and radius $\varrho$, and let $Q_\varrho(x) = x+(-\varrho, \varrho)^n$, $Q_\varrho^\pm(x) = Q_\varrho(x) \cap \{x \in \Rn \colon \pm x_1 >0\}$. For $\nu \in \Sn:=\{x \in \Rn \colon |x|=1\}$, we let also $Q_\varrho^\nu(x)$ the cube with ``center'' $x$, sidelength $\varrho$ and with a face in a plane orthogonal to $\nu$. We omit to write the dependence on $x$ when $x=0$.  (For $x$, $y\in \Rn$, we use the notation $x\cdot y$ for the scalar product and $|x|$ for the  Euclidean  norm.)   By ${\mathbb{M}^{n\times n}}$, ${\mathbb{M}^{n\times n}_{\rm sym}}$, and ${\mathbb{M}^{n\times n}_{\rm skew}}$ we denote the set of $n\times n$ matrices, symmetric matrices, and skew-symmetric matrices, respectively.   We write $\chi_E$ for the indicator function of any $E\subset \R^n$, which is 1 on $E$ and 0 otherwise.  If $E$ is a set of finite perimeter, we denote its essential boundary by $\partial^* E$, 
see \cite[Definition 3.60]{AFP}.   
 We indicate the minimum  and maximum  value between $a, b \in \R$ by  $a \wedge b$  and $a \vee b$, respectively. 
 
 The Lebesgue measure on $\Rn$ is denoted by $\mathcal{L}^n$ and the $(n{-}1)$-dimensional Hausdorff measure by $\mathcal{H}^{n-1}$.  The space of bounded $X$-valued Radon measures on $B$ is denoted by $M_b(B; X)$, for a locally compact subset $B$ of $\Rn$ and a finite dimensional Hilbert space $X$. The indication of the space $X$ is omitted when $X=\R$, and we write 
$M^+_b(B)$ for the subspace of positive measures of $M_b(B)$.  The space $M_b(B; X)$ is endowed with the norm $\| \mu \| _1 := \vert \mu \vert(B)$, where $\vert \mu \vert \in M_b(B)$ is the variation of the measure $\mu$, and it is identified with the dual of $C_0(B; X)$, the space of continuous functions $ \varphi \colon B \to X$ such that $\{\vert \varphi \vert \geq \varepsilon\}$ is compact for every $\varepsilon >0$, by the Riesz Representation Theorem (see, e.g., \cite[Theorem 6.19]{Rud}).
The weak$^*$ topology of $M_b(B; X)$ is defined using this duality.
 
 The space $L^1(B; X)$ of $X$-valued $\mathcal{L}^n$-integrable functions is regarded as a subspace of $M_b(B; X)$, with the induced norm. The $L^p$ norm, $1 \leq p \leq \infty$ is denoted by $\| \cdot \|_p$, while the brackets $\langle \cdot, \cdot \rangle$ denote the duality product between conjugate $L^p$ spaces.
 
The space 
 $\Mnn$
 is endowed with the Euclidean scalar product $\xi \colon \eta:= \sum_{ij} \xi_{ij} \eta_{ij}$ and with the corresponding Euclidean norm $\vert\xi \vert:=(\xi \colon \xi)^{1/2}$.
The symbol for the space of trace free matrices in $\Mnn$ is $\MD$.
For every $\xi\in\Mnn$ the orthogonal projection 
of $\xi$ on $\R I$ is $\frac{1}{n} \text{tr }(\xi)I$. Therefore the 
orthogonal projection on $\MD$, called the {\it deviator\/} of $\xi$, is
\[\xi_D:=\xi-\frac{1}{n}(\tr \,\xi) I\,.\]
\CCC We denote also
\[
\xi_m:=\frac{1}{n}(\tr \,\xi)\,.
\] \EEE
The \emph{symmetrized tensor product $a\odot b$} of two vectors $a,\,b \in \Rn$ is the symmetric matrix with entries $(a_ib_j+a_jb_i)/2$. If $X_1,\,X_2$ are Banach spaces, $Lin(X_1;X_2)$ is the space of linear operators from $X_1$ into $X_2$, endowed with the usual operator norm.

For every $u \in L^1(U; \Rn)$, with $U$ open in $\Rn$, let $Eu$ be the $\Mnn$-valued distribution on $U$ whose components are defined by $E_{ij}u=\frac{1}{2}(D_ju_i+D_iu_j)$. The space $BD(U)$ of functions with \emph{bounded deformation} is the space of all $u \in L^1(U; \Rn)$ such that $Eu \in \Mb(U; \Mnn)$. It is easy to see that $BD(U)$ is a Banach space with the norm $\|u\|_1 + \|Eu\|_1$. It is possible to prove that $BD(U)$ is the dual of a normed space (see \cite {TemStr} and \cite{MSC}), and this defines the weak$^*$ topology of $BD(U)$.
 A sequence $u_k$ converges to $u$ weakly$^*$ in $BD(U)$ if and only if $u_k \rightarrow u$ strongly in $L^1(U;\Rn)$ and $Eu_k \wto Eu$ weakly$^*$ in $\Mb(U; \Mnn)$. 
If $U$ is a bounded open set with Lipschitz boundary, for every function $u \in BD(U)$ the trace of $u$ on $\partial U$ belongs to $L^1( \partial U; \Rn)$.  It will always be denoted by the same symbol $u$.
If $u_k$, $u\in BD(U)$, $u_k\to u$ strongly in $L^1(U;\Rn)$, and $\|Eu_k\|_1 \to \|Eu\|_1$, then $u_k\to u$ strongly in $L^1(\partial U;\Rn)$ (see \cite[Chapter~II, Theorem~3.1]{Tem}). Moreover  (see \cite[Proposition~2.4 and Remark~2.5]{Tem}), there exists a constant $C>0$, 
depending on $U$, such that
\begin{equation}
\label{seminorm}
\|u\|_{1,U} \le C\, \|u\|_{1,\partial U}+ 
C\, \|Eu\|_{1,U}\,,
\end{equation} 
$\|\cdot \|_{p,B}$ being the $\emph{L}^p$ norm of a function with  domain a Borel set $B$. 

It is well known (see \cite{ACD97, Tem}) that for $v\in BD(U)$  the jump set  $J_v$ is countably $(\hn, n{-}1)$ rectifiable, and that
\begin{equation*}
\mathrm{E}v=\mathrm{E}^a v+ \mathrm{E}^c v + \mathrm{E}^j v,
\end{equation*}
where $\mathrm{E}^a v$ is absolutely continuous with respect to $\Ln$, $\mathrm{E}^c v$ is singular with respect to $\Ln$ and such that $|\mathrm{E}^c v|(B)=0$ if $\hn(B)<\infty$, while $\mathrm{E}^j v$ is concentrated on $J_v$. The density of $\mathrm{E}^a v$ with respect to $\Ln$ is denoted by $\E(v)$.

The space $SBD(U)$ is the subspace of all functions $v\in BD(U)$ such that $\mathrm{E}^c v=0$. For $p\in (1,\infty)$, we define
$SBD^p(U):=\{v\in SBD(U)\colon \E(v)\in L^p(U;\Mnn),\, \hn(J_v)<\infty\}$.
For a complete treatment of $BD$ and $SBD$ functions, we refer to 
to  \cite{ACD97,  Tem}.

The spaces $GBD(U)$ of \emph{generalized functions of bounded deformation} and $GSBD(U)\subset GBD(U)$  of \emph{generalized special functions of bounded deformation} have been introduced in \cite{DM13} (cf.\ \cite[Definitions~4.1 and 4.2]{DM13}). We recall that
every $v\in GBD(U)$ has an \emph{approximate symmetric gradient} $\E(v)\in L^1(U;\Mnn)$ and an \emph{approximate jump set} $J_v$ which is still countably $(\hn,n{-}1)$ rectifiable  (cf.~\cite[Theorem~9.1,  Theorem~6.2]{DM13}).

The notation for $\E(v)$ and $J_v$, which is the same as that one in the $SBD$ case, is consistent: in fact, if $v$  lies in  $SBD(U)$, the objects coincide (up to  negligible sets of points with respect to $\Ln$ and $\hn$, respectively). 
For $1 < p < \infty$, the   space $GSBD^p(U)$ is  given by  
\begin{equation*}
GSBD^p(U):=\{v\in GSBD(U)\colon \E(v)\in L^p(U;\Mnn),\, \hn(J_v)<\infty\}.
\end{equation*}
The theory of $GSBD$ functions has been developed with many contributions in recent years, we refer e.g.\ to \cite[Section~3]{FriPerSol21} for a general picture; in this paper we employ compactness in $GSBD$ from \cite{CC21JEMS}, see also \cite[Theorem~11.3]{DM13}.

 \medskip

\noindent {\bf{The reference configuration.}} Throughout the paper the \emph{reference configuration} $\Omega$ is a \emph{bounded connected open set} in $\Rn, \, n \geq 2$, with \emph{Lipschitz boundary}. 
We assume that Dirichlet and possibly Neumann boundary conditions are imposed, corresponding to two subsets $\dod \neq \emptyset$ and $\don$ of $\dom$ satisfying
\begin{taggedsubequations}{BC}\label{do}
\begin{equation}\label{2405211931}
\dom=\dod \cup \don,\quad \dod \cap \don=\emptyset\,,
\end{equation}
$\dod$ being the part of $\dom$ where the displacement is prescribed, while  traction forces are applied on $\don$.
Here $\dod$ and $\don$ are open (in the relative topology), with the same boundary $\Gamma$ such that 
\begin{equation}
\mathcal{H}^{n-2}(\Gamma)<+\infty\,.
\end{equation}
\end{taggedsubequations} \EEE
Along the paper we deal with the general case, intending that the case of pure Dirichlet boundary conditions corresponds to formally consider $\dod=\dom$ in \eqref{2405211931}.
 \medskip


\noindent {\bf{The elastic and plastic strains.}}  In our problem $u\in BD(\Omega;\Rn)$ represents the {\it displacement\/} of an elasto-plastic body and $Eu$ is the corresponding linearised {\it strain\/}. We now introduce the coupled elasto-plastic damage model. As for modelling plasticity, we follow \cite{Mora} and use the corresponding notations.

Given a displacement $u\in BD(\Omega; \Rn)$ and a 
boundary datum $w\in H^1(\Rn;\Rn)$, the {\it elastic\/} and {\it plastic strains\/} $e\in 
L^2(\Omega;\Mnn)$ and $p\in \MbDd$ satisfy the equations (weak kinematic compatibility conditions)

\begin{equation}\label{200}
\EE u=e+p \quad \hbox{a.e.\ in }\Omega\,, 
\end{equation}

Given $w\in H^1(\Rn;\Rn)$, the {\it set of admissible displacements and strains\/} for the boundary datum $w$ on $\dod$ is defined, with the same meaning and notation of \cite{Mora}, as 
\begin{equation}\label{1605210915}
\begin{split}
\A(w):=\{(u,e,p) &\in BD(\Omega; \Rn) \times L^2(\Omega;\Mnn) \times \MbDd \colon \,\eqref{200} \,\text{holds} \\ & \text{and } p=(w-u) \odot \nu \hn \mres \partial_D \Omega\, \}\,.
\end{split}
\end{equation}
Differently from \cite{Mora}, here $p$ has not null trace in general, so that $w-u$ could also have a non null normal component on $\dod$. If $\don=\emptyset$, \eqref{1605210915} reduces to the space of admissible configurations defined in \cite[Section~6]{BabMor15}.

 \medskip

\noindent {\bf{The damage variable and the associated dissipation.}} In addition to $u,\, e$, and $p$, we consider an internal variable $ \alpha \colon \Omega \to [0,1]$, which represents the damage state of the body.
At a given point $x \in \Omega$, as $\alpha(x)$ decreases from $1$ to $0$, the material point $x$ passes from a sound state to a fully damaged one.
\CCC During the evolution, the damage variable is forced to decrease. \EEE


In the total energy we consider a term which accounts for the energy dissipated by the body during the damage process. The \emph{dissipation term} is a functional
\begin{equation}\tag{D}
D\colon L^1(\Omega) \to \R^+\cup\{0\} \,\text{ strongly continuous}\,.\label{eq:D}
\end{equation}
We do not require that $D$ is nonincreasing or positively one-homogeneous, because it is not needed to prove our result.
However, such assumptions would be natural, since $D$ represents a dissipation. \CCC In \cite{KazMar18} $D$ is linear in $\alpha$. \EEE

\CCC The total energy includes also a damage gradient  term $\| \nabla \alpha \|_{2}^{2}$ on the damage variable, cf. \eqref{1005211029}.
In particular, whenever the enegy is finite the damage variable will be in $H^1(\Omega)$. The introduction of damage gradient term is recurrent in the study of evolution for damage models trough variational methods (see for instance \cite{CRASII} and \cite[Section~4]{AMV}).  \EEE
\medskip

\paragraph*{\textit{Capacity.}} For the notion of capacity we refer, e.g., to \cite{EvaGar, HeiKilMar}. We recall here the definition and some properties.  

Let $1 \leq \gamma < +\infty$ and let $\Omega$ be a bounded, open subset of $\Rn$. For every subset $B \subset \Omega$, the \emph{$\gamma$-capacity} of $E$ in $\Omega$ is defined by
\[
\Cap_\gamma(E,\Omega) := \inf\Big\{ \int\limits_\Omega |\nabla v|^\gamma \dx  \colon v \in W^{1,\gamma}_0(\Omega), \ v \geq 1 \text{ a.e.\ in a neighbourhood of } E \Big\} \, .
\]
A set $E \subset \Omega$ has \emph{$\gamma$-capacity zero} if $\Cap_\gamma(E, \Omega) = 0$ (actually, the definition does not depend on the open set $\Omega$ containing $E$). A property is said to hold \emph{$\Cap_\gamma$-quasi everywhere} (abbreviated as $\Cap_\gamma$-q.e.) if it holds for a set of $\gamma$-capacity zero.

If $1 < \gamma \leq n$ and $E$ has $\gamma$-capacity zero, then $\HH^s(E) = 0$ for every $s > n-\gamma$.

A function $\alpha \colon \Omega \to \R$ is $\Cap_\gamma$-{\em quasicontinuous} if for every $\varepsilon > 0$ there exists a set $E_\varepsilon \subset \Omega$ with $\Cap_\gamma(E_\varepsilon, \Omega) < \varepsilon$ such that the restriction $\alpha|_{\Omega \sm E_\varepsilon}$ is continuous. Note that if $\gamma > n$, a function $\alpha$ is $\Cap_\gamma$-quasicontinous if and only if it is continuous.  

Every function $\alpha \in W^{1,\gamma}(\Omega)$ admits a $\Cap_\gamma$-{\em quasicontinuous representative} $\tilde \alpha$, i.e., a $\Cap_\gamma$-quasicontinuous function $\tilde{\alpha}$ such that $\tilde \alpha = \alpha$ $\Ln$-a.e.\  in $\Omega$. The $\Cap_\gamma$-quasicontinuous representative is essentially unique, that is, if $\tilde \beta$ is another $\Cap_\gamma$-quasicontinuous representative of $\alpha$, then $\tilde \beta = \tilde{\alpha}$ $\Cap_\gamma$-q.e.\  in~$\Omega$.  It satisfies (see \cite[Theorem~4.8.1]{EvaGar})
\begin{equation} \label{eq:precise representative}
    \lim_{\rho \to 0} \frac{1}{|B_\rho(x_0)|} \int\limits_{B_\rho(x_0)} \! |\alpha(x) - \tilde \alpha(x_0)| \dx = 0 \, \quad \text{for $\Cap_\gamma$-q.e.\ $x_0 \in \Omega$} \, .
\end{equation}
Recalling \cite[Theorem~7]{Eva90}, if $\alpha_k \wto \alpha$ in $W^{1,\gamma}(\Omega)$, up to passing to a subsequence we have that for every $\wt\gamma \in [1,\gamma)$ and every $\varepsilon>0$ there exists a relatively open set $E_\varepsilon\subset \Omega$ such that
\begin{equation}\label{1205211105'}
\wt\alpha_k \to \wt\alpha \quad\text{uniformly on } \Omega \sm E_\varepsilon,\qquad \mathrm{Cap}_{\wt\gamma}(E_\varepsilon, \Omega) < \varepsilon.
\end{equation}
 \medskip

\noindent {\bf{The stored energy.}} The stored energy is the sum of two contributions: besides the classical expression of the elastic energy depending only on the elastic strain through Hooke's tensor, the model devised in \cite{KazMar18} displays a further term depending both on the damage variable and the plastic strain.

 \medskip

\paragraph{\textit{The elastic energy of the sound material.}} \CCC For every $e \in L^2(\Omega, \Mnn)$, when the damage variable assumes constant value 0,
the {\it elastic energy\/} is given by
\begin{equation*}
\QQ(e):=\frac{1}{2}\int_{\Omega} \C e(x) : e(x) \dx = \frac{1}{2}\langle \C e, e\rangle _{L^2(\Omega; \Mnn)},
\end{equation*}
with $\C$ the Hooke's tensor, or any fourth order tensor satisfying the symmetry conditions
\begin{taggedsubequations}{$\C$}\label{eq:Celast}
\begin{equation}\label{0402210844}
\C_{ijkl}=\C_{klij}=\C_{jikl}\quad\text{for all }i,j,k,l \in \{1,\dots, n\}
\end{equation} 
and the coercivity-continuity assumption
\EEE
\begin{equation}\label{0302211656}
  \gamma_1 |\xi |^2  \leq \C \xi : \xi \leq \gamma_2 |\xi |^2 \quad \text{for every} \, \xi \in \Mnn,
\end{equation}
\end{taggedsubequations}
 where $\gamma_1,\,\gamma_2$ are positive constants.
 In particular, this implies
 \begin{equation}
 |\C \xi| \leq 2 \gamma_2 |\xi|. \label{C4}
 \end{equation}
 
 \medskip

\noindent {\textit{The kinematical hardening term.} } 
The latter term in the elastic energy has the form
\begin{equation}\label{1205211245}
\int_\Omega \C_1(\alpha(x)) p(x) \colon p(x) \dx, \quad \BBB p=0 \text{ in }\{\alpha=1\}\EEE
\end{equation}
with $\C_1$ a fourth order tensor satisfying 
\begin{taggedsubequations}{$\C_1$}
\label{eq:C}
\begin{gather}
\BBB \C_1 \in C([0,1); Lin(\Mnn;\Mnn)),\label{C1} \EEE
 \\ 
  \gamma(\alpha) |\xi|^2  \leq \C_1(\alpha) \xi : \xi  \quad \text{for every} \, \alpha \in [0,1], \xi \in \Mnn, \label{C2}\\
   \C_1(\alpha)=0 \quad \text{if and only if }\alpha=0, \label{C2'}
\\
\alpha  \mapsto \C_1(\alpha) \xi : \xi \quad \text{is nondecreasing for every}\,\xi \in \Mnn, \label{C3} 
\\
\BBB \lim_{\alpha\to 1^-} \C_1(\alpha)\xi\cdot \xi=+\infty \quad\text{uniformly w.r.t.\ }\xi \in \Mnn,  \EEE \label{C5} 
\end{gather}
so that we may assume
\begin{equation}\label{propgamma}
\gamma\colon [0,1]\to \R^+ \text{ nondecreasing with }\gamma(\alpha)=0 \text{ if and only if }\alpha=0.
\end{equation}
\end{taggedsubequations}
\CCC This represents the energy blocked by the contact with friction of the lips of the cracks, and it results in a hardening behaviour. Its derivation is based on a micro-mechanical approach. 
Observe that $\C_1$ is not coercive uniformly with respect to $\alpha\in [0,1]$ since $\C_1(0)=0$, in contrast to the assumptions  on $\C(\alpha)$ in the incomplete damage models.

The functional in \eqref{1205211245} is well defined if $\alpha \in L^1(\Omega; [0,1])$ and $p\in \Lnn$. Since in our model the natural space for $p$ is 
$\MbDd$, we extend the definition to $\alpha \in H^1(\Omega;[0,1])$ and to those $p\in\MbDd$ for which there exist $u\in BD(\Omega)$, $e \in L^2(\Omega)$, $w \in H^1(\Omega)$ satisfying \eqref{200}, by setting
\BBB \begin{equation*}
\tQ(\alpha,p):=
\begin{dcases}
\int_{\Omega\sm\{\alpha=1\}} \hspace{-3em}\C_1(\alpha(x)) p(x) \colon p(x) \dx \quad\text{if }|p^s|(\{ x \in \Omega\cup \dod\colon\wt\alpha(x)>0\})=0, \, p \,\chi_{\{\alpha=1\}}=0,\\
+\infty \quad\text{otherwise,}
\end{dcases}
\end{equation*} \EEE
where $\wt \alpha$ is the quasicontinuous representative of $\alpha \in H^1(\Omega)$ and $\wt\alpha$ is defined in $\dom$ through the trace of $\alpha$ (we could consider any $H^1$ extension of $\alpha$ in an open set $\Omega' \supset \ol\Omega$).
Notice that $\wt \alpha$ is well defined up to a negligible set with respect to $\mathrm{Cap}_2$; by \eqref{200} and properties of $BD$ functions, $p$ may concentrate up to sets of nonnegative $\hn$ measure, and not on $\mathrm{Cap}_\delta$-negligible sets, for any $\delta>1$. Then it is meaningful to write $|p^s|(\{\wt\alpha>0\})$.
\BBB Observe also that $p \in L^1(\{\wt\alpha>0\})$ has to be 0 in $\{\alpha=1\}$. This formally corresponds to take $\C_1(1)=+\infty$, as requested in \cite[Section~3.3]{KazMar18}. \EEE
\EEE
 \medskip


\noindent {\bf{The constraint set and its support function.}} 
%
\CCC Let $K$ be a closed convex set in $\Mnn$ containing a ball of radius $r_H>0$, that is
\begin{equation}\label{0302211716}\tag{K}
\{ \sigma \in \Mnn \colon |\sigma|\leq r_H\} \subset K\,.
\end{equation} 
 Let us consider the \CCC support \EEE function $H\colon \Mnn \to \R^+\cup\{0\}$ 
\begin{equation*}
H(\xi):=\sup_{\sigma\in K} \sigma \colon \xi\,, 
\end{equation*} 
which, by \eqref{0302211716}, satisfies $H(p) \geq r_H |p|$ for all $p\in \Mnn$.
We remark that in the applications considered in \cite{KazMar18}, namely the Drucker-Prager criterion (or some variants like Mohr-Coulomb or Hoek-Brown criteria) with kinematical hardening coupled with damage, $K$ is unbounded in the direction of negative hydrostatic matrices: it has the form
\begin{equation*}
K:=\{\sigma\in \Mnn \colon \tau \sigma_m + \kappa(\sigma_D) -k\leq 0\}\,,
\end{equation*}
with $\kappa\colon \MD \to [0,+\infty)$ convex and positively 1-homogeneous, $\kappa(0)=0$, and $\tau>0$, $k>0$ two constants.
\medskip

\noindent {\bf{The plastic potential.}} 
Basing on the theory of convex functions of measures developed in \cite{GofSer}, we define the non-negative Borel measure
\begin{equation*}
\textbf{H}(p):= \int_{\Omega\cup \dod} H\biggl(\frac{dp}{d|p|}(x) \biggr)\,d|p|(x)\,,\quad\text{for }B\subset \Omega\cup \dod \text{ Borel.}
\end{equation*} 
If $\textbf{H}(p)$ has finite mass, namely it is a bounded Radon measure, we introduce the
\emph{plastic potential} $\HH \colon \MbDd \to \R$ by
\begin{equation}\label{HH}
\HH(p):=\textbf{H}(p)(\Omega\cup \dod)\,.
\end{equation}  
In this case, the results in \cite{DemTem84, DemTem86} (see in particular \cite[Theorem~2.1]{DemTem86}) give that $\textbf{H}$ is expressed through duality formulas. For a bounded smooth open set $\Omega'$ such that $\Omega\subset \Omega'$ and $\Omega' \cap \dom=\dod$, extending any $p$ by 0 on $\Omega'\sm \Omega$, it holds that   \CCC
\begin{equation}\label{2607210847} 
\int_{\Omega'}\varphi \,\mathrm{d}[\textbf{H}(p)]=\sup_{\sigma\in C_c^\infty(\Omega';K)}\int_{\Omega'} \varphi\, \sigma \colon \mathrm{d}p =\sup_{\sigma\in C_c(\Omega';K)}\int_{\Omega'} \varphi\, \sigma \colon \mathrm{d}p =\sup_{\sigma\in L^1(\Omega',|p|+\LL^n;K)}\int_{\Omega'} \varphi\, \sigma \colon \mathrm{d}p 
\end{equation}
for any $\varphi \in C_c(\Omega')$, $\varphi\geq 0$, and
\begin{equation}\label{2607210848} 
\HH(p)=\sup_{\sigma\in C_c^\infty(\Omega';K)} \int_{\Omega'} \sigma \colon \mathrm{d}p=\sup_{\sigma\in C_c(\Omega';K)} \int_{\Omega'} \sigma \colon \mathrm{d}p=\sup_{\sigma\in L^1(\Omega',|p|+\LL^n;K)}\int_{\Omega'} \sigma \colon \mathrm{d}p,
\end{equation}
where $L^1(\Omega',|p|+\LL^n;K)$ denotes the space of integrable functions with respect to $|p|+\LL^n$ with values in $K$.
  \EEE
Moreover, Reshetnyak Theorem (see \cite[Theorem 2.38]{AFP}) implies that $\HH$ is sequentially weakly$^*$-lower semicontinuous in $\MbDd$.
  \medskip

\noindent {\bf{The plastic dissipation.}}
We introduce now a term which represents the plastic dissipation in a given time interval.

A function $p\colon [0,T]\to \MbDd$ will be regarded as a function defined on the time interval $[0,T]$ 
with values in the dual of the separable Banach space $C(\Omega\cup \dod;\Mnn)$, that can be identified with the space of functions in $C(\ol\Omega;\Mnn)$ vanishing on $\overline{\don}$.
For every $s,t\in[0,T]$ with 
$s\leq t$ the total variation of $p$ on $[s,t]$ is defined by
\[
\V(p;s,t)=\sup\biggl\{\sum_{j=1}^N\|p(t_j)-p(t_{j-1})\|_1\,\Big|\, s=t_0< 
t_1< \dots< t_N=t, \, N\in\N
\biggr\}\,.
\]
\CCC For every partition $\PP$ of $[s,t]$, namely $\PP:=\{t_i\}_{0\leq i \leq N}$ with $s=t_0< 
t_1< \dots < t_N=t$, we define
\[ \V^\PP_\HH(p;s,t):=\sum_{i=1}^N \HH(p(t_i)-p(t_{i-1}))\,. \]
The \emph{$\HH$-variation of $p$} on $[s,t]$
is denoted by $\V_\HH(p;s,t)$ and is defined through
\begin{equation}\label{diss}
\begin{split}
\V_\HH(p;s,t):=\sup&\biggl\{ \sum_{j=1}^N \HH(p(t_j)-p(t_{j-1}))\,\Big|\, 
s=t_0< t_1< \dots< t_N=t, \,
N\in\N \biggr\}\\=\sup &\bigl\{\V_\HH^{\PP}(p;s,t)|\,\PP \text{ partition of $[s,t]$}\bigr\}.
\end{split}
\end{equation}
We recall that this notion has been introduced in \cite[Appendix]{Mora}. 
\EEE

 \medskip

\noindent {\bf{The prescribed boundary displacement and the external loading.}} 
We assume that the prescribed boundary displacement $w$ depends on time and satisfies the regularity assumption 
\begin{taggedsubequations}{Load}\label{eq:data}
\begin{equation}
w \in AC(\ot; H^1(\Rn;\Rn)), \label{w1}
\end{equation} 
so that the 
time derivative
$t\mapsto\dot w(t)$ belongs to $L^1([0,T]; H^1(\Rn;\Rn))$ and 
its strain $t\mapsto E\dot w(t)$ belongs to
$L^1([0,T];L^2(\Rn;\Mnn))$. The prescribed boundary value will be 
the trace on $\dod$ of $w$ (again denoted by $w$).
\CCC As for the volume force $f$, we assume that
\begin{equation}\label{0302212216}
f \in AC(\ot; L^n(\Rn;\Rn)).
\end{equation}
In the presence of Neumann boundary conditions, the traction is a function
\begin{equation}\label{1707212200}
g \in AC(\ot; L^\infty(\don)).
\end{equation}
We define the \emph{total load} $\LL \in AC(\ot; BD(\Omega)')$ by
\begin{equation}\label{0905211246}
\langle \LL(t) , v \rangle := \int_\Omega f(t) \cdot v \dx + \int_{\don} g(t)\cdot  v \dH,
\end{equation}
for every $v \in BD(\Omega)$. Notice that $v \in L^{1^*}(\Omega;\Rn)$ and $v \in L^1(\dom; \Rn)$, by embedding properties of $BD$ functions.
If we assume only Dirichlet boundary conditions, \eqref{0905211246} reads as $\langle \LL(t) , v \rangle := \int_\Omega f(t) \cdot v \dx$.
\EEE 
 For the main properties of absolutely continuous functions with values in reflexive Banach spaces we refer to \cite[Appendix]{Bre}.
 
We assume the following \emph{safe load} condition: there exists $\varrho \in AC(\ot; L^n(\Omega;\Rn))$ and a constant $\tau_0 >0$ such that for every $t\in \ot$
\begin{equation}\label{0905211302}
\begin{dcases}
-\diver \varrho(t)=f(t)\quad\text{a.e.\ in }\Omega\,,\qquad [\varrho(t) \nu]=g(t) \quad\hn\text{-a.e.\ in }\don,\\
 \varrho(t) + \tau \in K \quad\text{for every }|\tau|\leq \tau_0.
\end{dcases}
\end{equation}
This is a standard assumption to study existence results for quasistatic evolutions in plasticity. It results in coercivity for the plastic strain in the incremental minimisation problem.
In the condition $[\varrho(t) \nu]=g(t) \quad\hn\text{-a.e.\ in }\don$, $[\varrho(t) \nu]$ is the distribution on $\dom$ defined by
\begin{equation*}
\langle [\varrho(t) \nu],\psi\rangle:= \int_\Omega \diver \varrho(t) \psi \dx + \int_\Omega \varrho(t) \EE \psi \dx\,.
\end{equation*}
Notice that when $\diver \varrho\in L^q(\Omega;\Rn)$, $\varrho \in L^q(\Omega;\Mnn)$, then $[\varrho \nu]$ is in the dual space of $W^{q', 1-q'}(\dom)$. 

When dealing with Neumann boundary conditions in the present case, where the constraint set is unbounded in the deviatoric matrices, we enforce a further condition. We have to require that
\begin{equation}\label{2607210753}
|\varrho(t)\colon p| \leq C_\varrho|p|\qquad\text{in }\MbDd,
\end{equation}
\end{taggedsubequations}
for $C_\varrho>0$ independent of $t\in [0,T]$. \EEE
It is not clear in general how to determine the fields $\varrho$ satisfying the safe load condition. We notice that in the case of null forces the safe load condition is satisfied for $\varrho\equiv 0$, 
and when only a constant pressure $\mathrm{pr}(t)$ is applied on $\don$ with some Dirichlet conditions on $\dod$ (with null volume loadings), as in the case of the triaxial test, $\varrho(t)$ could be taken constant for every $t$, so that \eqref{2607210753} is satisfied if $\mathrm{pr}(t)$ is bounded uniformly in time. \EEE 
In the case of Dirichlet boundary conditions, we drop the condition on $[\varrho(t) \nu]$ in \eqref{0905211302}, besides \eqref{2607210753}.

By \eqref{0905211302} and \eqref{2607210753}, 
we obtain
 for every $t\in \ot$ and $(u,e,p) \in \A(w(t))$ the following integration by parts formula 
\begin{equation}\label{1605211221}
\langle \mathcal{L}(t), u \rangle= \langle \varrho(t), e - \EE w(t)  \rangle_{L^2} + \int_{\Omega\cup \dod}\varrho(t)\colon \mathrm{d} p
+ \langle \mathcal{L}(t), w(t) \rangle.
\end{equation}


\medskip

\noindent {\bf{The initial data.}}
The initial configuration for the evolution $\alpha_0$, $u_0$, $e_0$, $p_0$ is such that $\alpha_0\in H^1(\Omega;[0,1])$, $(u_0, e_0, p_0) \in \A(w(0))$ and
\begin{equation}\label{2305211859}\tag{IC}
\begin{split}
\QQ(e_0) &+ D(\alpha_0) +\|\nabla \alpha_0\|_2^2 +\tQ(\alpha_0, p_0) -\langle \LL(0), u_0 \rangle  \\& \leq
\QQ(\eta)+D(\beta)+\| \nabla \beta\|_2^2 + \tQ(\beta, q)  +\HH(q-p_0) - \langle \LL_k(t), v \rangle
\end{split}
\end{equation}
for every $\beta \leq \alpha_0$, $\beta\geq 0$, $(v,\eta,q)\in \A(w(0))$.

  \section{The minimisation problem and the discrete time solutions}
  
In this section we study the minimisation problems entering the time discrete approximation of quasistatic evolutions for the present model. This approximation follows the general scheme of minimising movements.

At each incremental time step, the updated approximate solution is obtained by minimising, among the admissible configurations for the updated external loading, the sum of the internal energy terms, of the loading, and of the dissipation from the approximate solution at the previous time step.

More precisely, for every $k\in \N$ a sequence of subdivisions $(\tki)_{0\leq i\leq k}$ of the 
interval $[0,T]$ is introduced, with
\begin{eqnarray*}
& 0=t_k^0<t_k^1<\dots<t_k^{k-1}<t_k^{k}=T\,,\\
&\displaystyle
\lim_{k\to\infty}\,
\max_{1\le i\le k} (\tki-\tkim)= 0\,.
\end{eqnarray*}

Starting from $(\alpha_k^0, u_k^0,e_k^0, p_k^0):=(\alpha_0, u_0,e_0,p_0) \in \WWi \times \A(w(0))$, given $\wki:=w(\tki)$, $\LL_k^i:=\LL(\tki)$, for $i=1,\ldots,k$ we 
define $(\aki, \uki,e_k^i,\pki)$ as a solution to the incremental problem
\begin{equation}\label{1005211029}
\min_{0\leq\alpha\leq \alpha_k^{i-1},\, (u,e,p) \in \A(\wki)} \Big\{ \QQ(e)+D(\alpha)+\da + \tQ(\alpha,p) + \HH(p-p_k^{i-1}) - \langle \LL_k^i, u \rangle  \Big\}.
\end{equation}
We now discuss existence of solutions to the problem above.

First we notice that, assuming the problem at time-step $i-1$ admits minimisers, the minimising functional (at time-step $i$) is finite on $(\alpha_k^{i-1}, u_k^{i-1}+ \wki-w_k^{i-1}, e_k^{i-1} + E(\wki-w_k^{i-1}), p_k^{i-1})$, so the infimum is not $+\infty$. 

Moreover, we prove an estimate which will enforce coerciveness in $p$, see also Proposition~\ref{prop:1505212300} later on. For any fixed $\varepsilon>0$, letting $E_\varepsilon\subset \Omega'$ such that $|p|(\Omega'\sm E_\varepsilon)<\varepsilon$ and $\psi_\varepsilon \in C_c^\infty(\R^n)$ a cutoff function such that $\psi_\varepsilon=1$ on $E_\varepsilon$ and $\psi_\varepsilon=0$ on $\Rn\sm \Omega'$, by \eqref{0905211302} and \eqref{2607210753}, recalling \eqref{2607210848},  we have that
%
\begin{equation}\label{1505212325}
\begin{split}
\HH(p)  - \int_{\Omega\cup \dod}\varrho(t)\colon \mathrm{d} p
\geq 
 & \sup \Big\{ \int_{\Omega\cup \dod}  \big(\sigma - \psi_\varepsilon\varrho(t)\big) \colon \mathrm{d}p \ \colon \sigma \in C_c(\Omega';K) \Big\} \\& - \int_{\Omega\cup \dod} (1-\psi_\varepsilon) \varrho(t)\colon \mathrm{d}p 
\\& \geq \sup \Big\{\int_{\Omega\cup \dod} \widehat{\sigma}\colon \mathrm{d}p \colon \widehat{\sigma}\in C_c(\Omega'; B_{\tau_0}(0)) \Big\}- C_\varrho \, \varepsilon
\\& 
 \geq \tau_0 |p|(\Omega\cup \dod) - C_\varrho \, \varepsilon,
\end{split}
\end{equation}
using density arguments and the fact that $\psi \, \varrho(t) + \tau \in K$ for every $\psi \in [0,1]$, $t\in [0,T]$, and $|\tau|\leq \tau_0$, which follows from the convexity of $K_{\tau_0}:=\{\xi \in \Mnn \colon \xi + \tau \in K \text{ for all }|\tau|<\tau_0\}$ and since 0 is an interior point of $K_{\tau_0}$ (in view of \eqref{0905211302} and of the assumptions on $K$). 
Notice that we may exploit the finiteness of $\HH(p)$ as in \cite[Proposition~6.1]{BabMor15}, being the force term bounded and since we work on minimising sequences.
The semicontinuity of the plastic dissipation plus the external loading is obtained arguing as in \cite[Theorem~3.3]{Mora}, see also Proposition~\ref{prop:1505212300}.
At this stage, the only term whose semicontinuity is not directly ensured is $\tQ(\alpha,p)$. This is proven in the following result.

\begin{thm}\label{thm:1505212332}
Let $\alpha_k \wto \alpha$ in $H^1(\Omega;[0,1])$,
$(u_k, e_k, p_k) \in \A(w_k)$, $(u,e,p) \in \A(w)$ with $w_k\to w$ in $H^1(\Omega;\Rn)$, $p_k \wtos p$ in $\MbDd$, $e_k \wto e$ in $\Lnn$. 
Then
\begin{equation}\label{1105210945}
\tQ(\alpha,p) \leq \liminf_{k\to +\infty} \tQ(\alpha_k, p_k).
\end{equation}
\end{thm}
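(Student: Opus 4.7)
The plan is to assume $L:=\liminf_{k\to\infty}\tQ(\alpha_k,p_k)$ is finite and, after extracting a subsequence, realise the $\liminf$ as a limit; in particular $p_k\chi_{\{\alpha_k=1\}}=0$, $|p_k^s|(\{\wt\alpha_k>0\})=0$, and $\sqrt{\C_1(\alpha_k)}\,p_k^a$ is bounded in $\Lnn$, where $p_k^a$ denotes the density of the absolutely continuous part of $p_k$. By Rellich and \eqref{1205211105'} applied with $\wt\gamma\in(1,2)$, I may further assume $\alpha_k\to\alpha$ $\Ln$-a.e.\ in $\Omega$ and that, for every $\varepsilon>0$, there exists $E_\varepsilon\subset\Omega$ with $\Cap_{\wt\gamma}(E_\varepsilon,\Omega)<\varepsilon$ and $\wt\alpha_k\to\wt\alpha$ uniformly on $\Omega\setminus E_\varepsilon$. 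The argument then splits into checking that $(\alpha,p)$ lies in the finiteness domain of $\tQ$, and into proving lower semicontinuity of the integrand.

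First I would verify the admissibility conditions. That $p\chi_{\{\alpha=1\}}=0$ follows from Fatou applied to $\C_1(\alpha_k)\,p_k^a:p_k^a$ on $\{\alpha=1\}$: since $\alpha_k\to 1$ $\Ln$-a.e.\ there, \eqref{C5} forces $\C_1(\alpha_k)\xi:\xi\to+\infty$ pointwise for fixed $\xi\neq 0$, and an $M$-truncation of $p^a$ combined with the energy bound rules out a non-zero absolutely continuous part; the singular mass of $p$ sitting on $\{\alpha=1\}\subset\{\wt\alpha>0\}$ is then eliminated by the next step. To exclude $|p^s|(\{\wt\alpha>0\})>0$ I would slice along lines: for every $\nu\in\Sn$ and $\hn$-a.e.\ $y\in\nu^\perp$, the one-dimensional slice $\alpha^\nu_y$ lies in $H^1(\R)\hookrightarrow C^0(\R)$ so $\{\alpha^\nu_y>\delta\}$ is open; the $BD$ slicing formulas identify the sliced plastic strain with a $BV$-derivative, the vanishing $|p_k^s|\mres\{\wt\alpha_k>0\}=0$ descends to slices, and combining it with weak$^*$ convergence of the slices of $p_k$ and continuity of the sliced damage limit gives $|(p^{\nu,\nu}_y)^s|(\{\alpha^\nu_y>\delta\})=0$. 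Fubini integration, letting $\delta\to 0^+$, and arbitrariness of $\nu$ then yield the claim.

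The core step is the lower semicontinuity of the quadratic integrand via blow-up and $GSBD$ compactness. Fix $\delta>0$ and pick $x_0\in\{\wt\alpha>\delta\}$ which is simultaneously a Lebesgue point of $\alpha$ and of $p^a$, at which blow-ups of $p$ recover $p^a(x_0)$, and of density $1$ for $\{\wt\alpha>\delta\}$; choose $\rho_k\to 0$ so slowly that $\beta_k(y):=\alpha_k(x_0+\rho_k y)$ satisfies $\beta_k\to\alpha(x_0)$ in $L^2(B_1)$ and $\int_{B_1}|\nabla\beta_k|^2\,\mathrm{d}y$ stays bounded. Applying \eqref{1205211105'} together with the capacity-to-perimeter refinement of Lahti \cite{Lah17}, extract for each $\varepsilon>0$ a set $F_\varepsilon\subset B_1$ with $\Cap_{\wt\gamma}(F_\varepsilon,B_1)<\varepsilon$, $\hn(\partial^* F_\varepsilon)<\varepsilon$, and $\wt\beta_k\to\alpha(x_0)$ uniformly on $B_1\setminus F_\varepsilon$. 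Correspondingly rescale $(u_k,e_k,p_k^a)$ to $(v_k,\eta_k,q_k)$: by \eqref{C2}, $q_k$ is bounded in $L^2(B_1\setminus F_\varepsilon;\Mnn)$, hence $\E(v_k)=\eta_k+q_k$ is bounded in $L^2(B_1\setminus F_\varepsilon;\Mnn)$. Regarding each $v_k$ as an element of $GSBD^2(B_1)$ with jump set contained in $\partial^*F_\varepsilon$ of uniformly bounded $\hn$-measure, the $GSBD^2$ compactness from \cite{CC21JEMS} together with Ioffe-Olech applied with the uniformly coercive weights $\C_1(\beta_k)\xi:\xi\ge\gamma(\alpha(x_0)/2)|\xi|^2$ yields
\[
\int_{B_1\setminus F_\varepsilon}\!\C_1(\alpha(x_0))\,q:q\,\mathrm{d}y\le\liminf_{k\to\infty}\int_{B_1\setminus F_\varepsilon}\!\C_1(\beta_k)\,q_k:q_k\,\mathrm{d}y,
\]
where $q=p^a(x_0)$ a.e.\ on $B_1\setminus F_\varepsilon$ by the Lebesgue-point choice. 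Sending $\varepsilon\to 0$, summing over a Vitali cover of $\{\wt\alpha>\delta\}$, and finally $\delta\to 0^+$ (using the first step to discard $p^s$) gives \eqref{1105210945}.

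The main obstacle is the $GSBD$ step: one must upgrade the capacitary exceptional set to one of small perimeter via \cite{Lah17}, treat $\partial^* F_\varepsilon$ as a genuine jump set of the rescaled displacements, and invoke the $GSBD^2$ compactness of \cite{CC21JEMS} uniformly in $k$, all while choosing $\rho_k$ fine enough to linearise $\alpha$ but coarse enough to keep the Dirichlet integral bounded. A parallel delicate point is matching the quasi-continuous representative $\wt\alpha$, unique only up to $\Cap_2$-null sets, with $|p^s|$, which may concentrate on $(n{-}1)$-dimensional sets; this is precisely why the slicing argument in the first step cannot be replaced by a direct open-set approximation of $\{\alpha>\delta\}$ from the weak $H^1$ convergence of $\alpha_k$.
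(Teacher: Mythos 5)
Your strategy follows the paper's proof in all its essential ingredients: the Evans capacitary uniform-convergence estimate, Lahti's refinement to an exceptional set of small perimeter, interpretation of the truncated rescaled displacements as $GSBD^2$ functions with jump set in $\partial^* F_\varepsilon$, $GSBD^2$ compactness from \cite{CC21JEMS} to identify the weak $L^2$-limit of the rescaled absolutely continuous plastic strains, Ioffe--Olech for the local lower bound, and the one-dimensional slicing argument to kill $|p^s|$ on $\{\wt\alpha>0\}$. The paper assembles the local estimates via the Besicovitch derivation theorem applied to the weak$^*$ limit $\mu$ of the measures $\mu_k$ (which cleanly converts the blow-up inequality into $\frac{\mathrm{d}\mu}{\mathrm{d}\Ln}(x_0)\geq \C_1(\alpha(x_0))p(x_0):p(x_0)$), whereas you sum over a Vitali cover of $\{\wt\alpha>\delta\}$ and send $\delta\to 0$; both assemble correctly, but the Besicovitch route is cleaner because it avoids re-deriving almost-additivity of the local quantities. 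Your phrase ``choose $\rho_k\to 0$ so slowly'' is misleading; what is actually needed is the paper's diagonal argument across $\rho\to 0$ and $k\to\infty$, exploiting that $x_0$ is a Lebesgue point of $\nabla\alpha$ so that the Dirichlet integral of the blown-up limit vanishes.

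There is, however, a genuine gap in your treatment of the constraint $p\,\chi_{\{\alpha=1\}}=0$. Fatou applied to $\C_1(\alpha_k)p_k^a:p_k^a$ on $\{\alpha=1\}$ gives $\int_{\{\alpha=1\}}\liminf_k \C_1(\alpha_k)p_k^a:p_k^a\,\mathrm{d}x\leq C$, but the integrand $\liminf_k \C_1(\alpha_k)p_k^a:p_k^a$ is computed along the sequence $p_k^a$, which need not converge to $p^a$ pointwise (nor weakly in $L^1$ without further equi-integrability): the energy bound is perfectly compatible with $p_k^a\equiv 0$ on $\{\alpha=1\}$ even when $p^a\neq 0$ there. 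An ``$M$-truncation of $p^a$'' cannot bridge this, since it operates on the limit rather than on the sequence. The paper closes exactly this hole by re-running the blow-up and $GSBD^2$ compactness at points $x_0$ with $\alpha(x_0)=1$ (see its proof of \eqref{1008211732}): the capacitary uniform convergence yields $\beta_k\geq 1-\delta$ off a small-perimeter set, so $q_k\wto p(x_0)$ in $L^2$ outside it, and \eqref{C5} then forces $p(x_0)=0$. You already have all this machinery in your ``core step''; you simply need to run it at $\{\alpha=1\}$ instead of the insufficient Fatou argument.
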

\begin{proof}
First, we notice that it is not restrictive to argue in an open set in place of $\Omega\cup \dod$. In fact, given an open set $\Omega' \supset \Omega \cup \dod$ with $\Omega'\cap \dom=\dod$, we may define 
\begin{equation*}
\check{Q}(\check{\beta},\check{q})=
\begin{dcases}
\int_{\Omega'\sm \{ \check{\beta}=1 \}} \C_1(\check{\beta}) \check{q} \colon \check{q}\dx,\quad &\text{if }|\check{q}^s|(\{\check{\beta}>0\})=0,\ \check{q}\, \chi_{\{\check{\beta}=1\}}=0,\\
+\infty \quad&\text{otherwise,}
\end{dcases}
\end{equation*}
for $\check{\beta}$ an extension of $\beta$ to $H^1(\Omega')$ \BBB (we can always assume $\check{\beta} \in H^1(\Omega';[0,1])$) \EEE chosen in such a way to ensure the weak-$H^1$ convergence in passing from $\Omega$ to $\Omega'$, and 
\begin{equation*}
\check{q}:=
\begin{dcases}
q \quad&\text{in } \Omega\cup \dod,\\
\BBB 0 \EEE \quad&\text{in }\Omega' \sm (\Omega\cup \dod).
\end{dcases}
\end{equation*}
\BBB Observe that $\check{q}$ is such that $(\check{v}, \check{\eta}, \check{q})\in \A(w)$, for $\check{v}$ the extension of $v$ with $w$ in $\Omega'\sm \Omega$ and $\check{\eta}$ the extension of $\eta$ with $\EE w$ in $\Omega'\sm \Omega$. 
Thus $\check{Q}(\check{\alpha}_k,\check{p}_k) =\tQ(\alpha_k,p_k)$, $\check{Q}(\check{\alpha},\check{p}) =\tQ(\alpha,p)$, and
\EEE
\eqref{1105210945} is equivalent to $\check{Q}(\check{\alpha},\check{p}) \leq \liminf_k \check{Q}(\check{\alpha}_k,\check{p}_k)$, so that we may assume to work with the restriction on $\Omega$ for $\tQ$.

Up to a subsequence, we may assume that
\begin{equation*}
\liminf_{k\to +\infty} \tQ(\alpha_k, p_k)=\lim_{k\to +\infty} \tQ(\alpha_k, p_k) < +\infty
\end{equation*}
and that the measures $\mu_k \in \mathcal{M}^+_b(\Omega)$ 
defined by 
\begin{equation*}
\mu_k(A):=\int_{A \sm\BBB \{\alpha_k=1\}\EEE} \C_1(\alpha_k) p_k \colon p_k \dx \quad\text{for }A\subset \Omega \text{ Borel, }
\end{equation*} 
\BBB with $p_k=0$ in $\{\alpha_k=1\}$, \EEE are such that
\begin{equation*}
\mu_k \wtos \mu \quad\text{in }\mathcal{M}^+_b(\Omega)\,.
\end{equation*}
By the Besicovitch derivation theorem and the Radon-Nikodym decomposition for $\mu$ (cf.\ \cite[Theorem~2.22]{AFP}), the result will follow from the \BBB three \EEE
 estimates
\begin{equation}\label{1105211028}
\frac{\mathrm{d} \mu}{\mathrm{d} \Ln}(x_0) \geq \C_1(\alpha(x_0))p(x_0) \colon p(x_0) \quad\text{for $\Ln$- a.e.\ } x_0 \in \Omega \sm \{\alpha=1\},
\end{equation}
\begin{equation}\label{1505211152}
|p^s|(\{\wt\alpha>0\})=0,
\end{equation}
\begin{equation}\label{1008211732}
p=0 \quad \Ln\text{-a.e.\ in }  \{\alpha=1\}.
\end{equation}
Let us then prove these estimates.
\medskip
\paragraph*{\textbf{Proof of \eqref{1105211028}.}}
\medskip
\paragraph*{\textit{Step~1: Choice of the blow up point $x_0$.}}
We choose $x_0$ in a subset of \BBB $\Omega\sm\{\alpha=1\}$ \EEE of full $\Ln$-measure, satisfying the following conditions:
\begin{itemize}
\item[(i)] in $x_0$ there exists the Radon-Nikodym derivative of $\mu$ with respect to $\Ln$
\begin{equation}\label{1105211313}
\frac{\mathrm{d}\mu}{\mathrm{d}\Ln}(x_0)=\lim_{\varrho\to 0^+} \frac{\mu(B_\varrho(x_0))}{\omega_n \varrho^n}\,.
\end{equation}
\item[(ii)] $x_0$ is a Lebesgue point for $\nabla \alpha$. \\
This gives that $\alpha^{x_0}_\varrho(y):= \alpha(x_0+\varrho y)$, $y \in B_1$ is such that
\begin{equation}\label{1105211113}
\alpha^{x_0}_\varrho \to \alpha(x_0)\BBB <1\EEE\quad\text{in }H^1(B_1)\,,
\end{equation} 
since $\nabla \alpha^{x_0}_\varrho(y)= \varrho \nabla \alpha(x_0+\varrho y)$, and then
\begin{equation*}
\int_{B_1} |\nabla \alpha^{x_0}_\varrho|^2 \, \mathrm{d}y= \varrho^2 \int_{B_1}|\nabla \alpha(x_0+\varrho y)|^2 \, \mathrm{d}y= \varrho^2 \frac{\int_{B_\varrho(x)} |\nabla \alpha|^2 \dx}{\varrho^n}
\end{equation*}
which tends to 0 as $\varrho\to 0^+$ since $\frac{\int_{B_\varrho(x)} |\nabla \alpha|^2 \dx}{\omega_n\varrho^n} \to |\nabla \alpha(x_0)|$.
\item[(iii)] $u$ is approximately differentiable in $x_0$. \\
Then
\begin{equation*}
u^{x_0}_\varrho(y):= \frac{u(x_0+\varrho y) - u(x_0)}{\varrho},\quad y\in B_1
\end{equation*}
is such that
\begin{equation}\label{1105211148}
u^{x_0}_\varrho \to \ol u_0\quad\text{in } L^1(B_1)\,, \quad \text{for } \ol u_0(y):= \nabla u (x_0) y\,,\quad y\in B_1\,.
\end{equation}
Being $u \in BD(\Omega)$, it holds that $u^{x_0}_\varrho \in BD(B_1)$ and
\begin{equation}\label{1105212109}
\EE u^{x_0}_\varrho \wtos \EE u(x_0) \quad\text{in } \Mbu
\end{equation}
\item[(iv)] $x_0$ is a Lebesgue point for $e \in \Lnn$.\\
Then $e^{x_0}_\varrho(y):=e(x_0+\varrho y)$, $y \in B_1$ is such that
\begin{equation}\label{1105212103}
e^{x_0}_\varrho \to e(x_0) \quad\text{in }\Lnnu\,.
\end{equation}
\item[(v)] in $x_0$ there exists the Radon-Nikodym derivative of $p$ (and then of $\EE u$, in view of (iv)) with respect to $\Ln$
\begin{equation*}
p(x_0)=\frac{\mathrm{d}p}{\mathrm{d}\Ln}(x_0)=\lim_{\varrho\to 0^+} \frac{p (B_\varrho(x_0))}{\omega_n \varrho^n} \in \Mnn
\end{equation*}
and 
\begin{equation}\label{1205210939}
\EE u(x_0)=e(x_0) + p(x_0)\,.
\end{equation}
\end{itemize}
In fact, conditions (i)--(v) are satisfied in a subset of \BBB $\Omega\sm \{\alpha=1\}$ \EEE of full $\Ln$-measure (see \cite{ACD97} for (iii)).
\medskip
\paragraph*{\textit{Step~2: Blow up argument: change of variables.}}
For $x_0$ fixed as in Step~1, we perform a blow up procedure. Let us fix a sequence $(\varrho_h)_h$ converging to 0 such that $\mu\big(B_{\varrho_h}(x_0)\big)=0$ for every $h$ (notice that this holds for all $\varrho$ except countable many). Then, by \eqref{1105211313} we have that 
\begin{equation}\label{0203201239}
\begin{split}
\omega_n\,\frac{\mathrm{d} \mu}{\mathrm{d} \Ln}(x_0)&= \lim_{h\to \infty}\lim_{k\to \infty}\frac{\mu_k(B_{\varrho_h}(x_0))}{\varrho_h^n} = \lim_{h\to \infty}\lim_{k\to \infty}\frac{1}{\varrho_h^n}\int_{B_{\varrho_h}(x_0) \BBB \sm\{\alpha_k=1\}  \EEE} \hspace{-2em}\C_1(\alpha_k) p_k \colon p_k \dx  \,.
\end{split}
\end{equation} 
Consider the rescaling function $\lambda^{x_0,\varrho}\colon B_\varrho(x_0) \to B_1$ defined by $\lambda^{x_0,\varrho}(x):= \frac{x-x_0}{\varrho}$. We define in correspondence the measures
\begin{equation}\label{1105212024}
\EE_\varrho^k:= \frac{1}{\varrho^n}\lambda^{x_0,\varrho}_{\#} \EE u_k,\qquad p_\varrho^k:= \frac{1}{\varrho^n}\lambda^{x_0,\varrho}_{\#} p_k, \qquad \tilde{e}_\varrho^k:= \frac{1}{\varrho^n}\lambda^{x_0,\varrho}_{\#} e_k,
\end{equation}
obtained from $\EE u_k$, $e_k$, $p_k$ through the push-forward of $\lambda^{x_0, \varrho}$, denoted by $\lambda^{x_0,\varrho}_{\#}$.
A straightforward calculation shows that 
\begin{equation*}
\EE_\varrho^k=\EE u_\varrho^k, \quad \tilde{e}_\varrho^k = e_\varrho^k \Ln
\end{equation*}
for $u_\varrho^k\in BD(B_1)$, $e_\varrho^k \in L^2(B_1;\Mnn)$ given by
\begin{equation*}
u_\varrho^k(y):=\frac{u_k(x_0+\varrho y)-u_k(x_0)}{\varrho},\qquad e_\varrho^k(y):=e_k(x_0+\varrho y)\,.
\end{equation*}
\CCC By the finiteness of $\tQ(\alpha_k, p_k)$, we deduce that $|(p_\varrho^k)^s|(\{\wt\alpha_\varrho^k >0\})=0$. \EEE
We observe also that
\begin{equation}\label{1205211012}
\nabla u_\varrho^k(y)= \nabla u_k (x_0+\varrho y)\quad\text{ for $\Ln$-a.e.\ }y\in B_1.
\end{equation}
In fact, this holds in the approximate differentiability points of $u$, by a change of variable in the very definition of approximate differential.
Moreover, $|p_\varrho^k|(B_1)=\frac{|p_k(B_\varrho(x))|}{\varrho^n}$ for every $\varrho>0$. In particular, due to \eqref{0203201239}, $|p_{\varrho_h}^k|(B_1)$ is bounded uniformly in $h$, $k$ along the sequence $(\varrho_h)_h$.
Notice also that for every $\varrho>0$ we have that
\begin{equation}\label{1205210921}
\EE u_\varrho^k=e_\varrho^k+p_\varrho^k,\qquad \EE u_\varrho^k \wtos \EE u^{x_0}_\varrho\quad\text{in }\Mbu, \qquad e_\varrho^k \wto e^{x_0}_\varrho\quad\text{in }\Lnnu,
\end{equation}
and that, by our assumption that $\alpha_k \wto \alpha$ in $H^1(\Omega)$, the functions $\alpha_\varrho^k\in H^1(B_1)$ defined by $\alpha_\varrho^k(y):=\alpha_k(x_0+\varrho y)$ are such that
\begin{equation*}
\alpha_\varrho^k \wto \alpha^{x_0}_\varrho \quad \text{in }H^1(B_1).
\end{equation*}
Collecting all the conditions above, we can use a diagonal argument to find a subsequence $(\varrho_{h_k})_k$ such that, for
\begin{equation*}
v_k:=u_{\varrho_{h_k}}^k,\quad q_k:=p_{\varrho_{h_k}}^k, \quad \eta_k:=e_{\varrho_{h_k}}^k, \quad \beta_k:=\alpha_{\varrho_{h_k}}^k,
\end{equation*}
the following holds:
\begin{equation}\label{1205210932}
\begin{split}
& \EE v_k= q_k + \eta_k,\\ 
v_k \wtos \ol u_0 \quad \text{in }BD(B_1), \quad  q_k \wtos p(x_0)  \quad\text{in }&\Mbu,\quad  \eta_k \wto e(x_0) \quad\text{in }\Lnnu, \\
& \beta_k \wto \alpha(x_0) \quad \text{in }H^1(B_1),\\
\omega_n\,\frac{\mathrm{d} \mu}{\mathrm{d} \Ln}(x_0)=\lim_{k\to \infty}\frac{1}{\varrho_{h_k}^n}\int_{B_{\varrho_{h_k}}(x_0)\BBB \sm\{\alpha_k=1\}  \EEE} & \hspace{-4em}\C_1(\alpha_k) p_k \colon p_k \dx = \lim_{k\to \infty}\int_{B_1\BBB \sm\{\beta_k=1\}  \EEE} \hspace{-3em}\C_1(\beta_k) q_k \colon q_k \,\mathrm{d}y.
\end{split}
\end{equation}
We notice that the last equation above follows from a change of variable, recalling \eqref{1205211012}.
Moreover, we have
\begin{equation}\label{1205211303} 
|(q_k)^s|(\{\wt\beta_k>0\})=0\quad \BBB \text{and}\quad q_k=0 \text{ in }\{\beta_k=1\}. \EEE
\end{equation}
\medskip
\paragraph*{\textit{Step~3: Blow up argument: semicontinuity.}}
Since $\mu$ is a nonnegative measure, \eqref{1105211028} is satisfied if $\alpha(x_0)=0$, by \eqref{C2'}. Let us then fix $x_0$, satisfying (i)--(v) in Step~1, such that $\alpha(x_0)>0$.

By \cite[Theorem~7]{Eva90}, up to passing to a subsequence we have that for every $\delta \in [1,2)$ and every $\varepsilon>0$ there exists a relatively open set $\tilde{A}_\varepsilon\subset B_1$ such that
\begin{equation}\label{1205211105}
\wt\beta_k \to \alpha(x_0) \quad\text{uniformly on } B_1 \sm \tilde{A}_\varepsilon,\qquad \mathrm{Cap}_\delta(\tilde{A}_\varepsilon, B_1) < \varepsilon.
\end{equation}
By \cite[Lemma~3.1]{Lah17} (notice that in subsets of $\Rn$ the two notions of capacity in \cite{Eva90} and \cite{Lah17} are equivalent, namely they are the same up to constants) there exists a constant $C>0$, depending only on $n$, and a set $A_\varepsilon \supset \tilde{A}_\varepsilon$ with (the notion of capacity below are all relative to $B_1$)
\begin{equation}\label{1205211138}
\mathrm{Cap}_1(A_\varepsilon) \leq C \,\mathrm{Cap}_1(\tilde{A}_\varepsilon) \quad\text{and}\quad \mathrm{Per}(A_\varepsilon, B_1) \leq C \,\mathrm{Cap}_1(\tilde{A}_\varepsilon).
\end{equation}
Collecting \eqref{1205211105} and \eqref{1205211138} we have that
\begin{equation}\label{1205211141}
\wt\beta_k \to \alpha(x_0) \quad\text{uniformly on } B_1 \sm A_\varepsilon,\qquad \mathrm{Per}(A_\varepsilon, B_1) < C\, \varepsilon.
\end{equation}
In particular, 
\begin{equation}\label{1205211309}
\wt\beta_k \geq \frac{\alpha(x_0)}{2}>0 \quad \text{in }B_1 \sm A_\varepsilon,
\end{equation} 
for $k$ large enough. By the last condition in \eqref{1205210932} and \eqref{eq:C} we get that $q_k$ is uniformly bounded in $L^2(B_1 \sm A_\varepsilon;\Mnn)$. 

For fixed $\varepsilon>0$ consider the function
\begin{equation*}
\check{v}_k:= v_k \, \chi_{B_1 \sm A_\varepsilon}.
\end{equation*}
It is immediate that $\check{v}_k$ are in $GSBD^2(\Omega)$ and that $\E(\check{v}_k)= \E(v_k) \, \chi_{B_1 \sm A_\varepsilon}$, $J_{\check{v}_k} \subset J_{v_k} \cup (\partial^* A_\varepsilon \cap B_1)$ (recall that $\E(v)$ denotes the approximate symmetric gradient of a function $v \in G(S)BD$).
By \eqref{1205211303} and \eqref{1205211309} we get
\begin{equation*}
J_{\check{v}_k} \subset \partial^* A_\varepsilon \cap B_1, \qquad \E(\check{v}_k)= \eta_k \, \chi_{B_1 \sm A_\varepsilon} + q_k \, \chi_{B_1 \sm A_\varepsilon}\in \Lnnu.
\end{equation*}  
so that, by \eqref{1205211141}, the functions $\check{v}_k$ are bounded in $GSBD^2(B_1)$. 
From the fact that $\check{v}_k \to \ol u_0 \, \chi_{B_1 \sm A_\varepsilon} $ in $L^1(B_1; \Rn)$, applying \cite[Theorem~11.3]{DM13} or \cite[Theorem~1.1]{CC21JEMS} we obtain that $\E(\check{v}_k) \wto \E(u)(x_0)$ in $L^2(B_1 \sm A_\varepsilon; \Mnn)$ and then (recall the second line in \eqref{1205210932})
\begin{equation}\label{1205211403}
q_k \wto p(x_0) \quad\text{in } L^2(B_1 \sm A_\varepsilon; \Mnn)\,.
\end{equation}
Being $\tQ$ \BBB lower semicontinuous and also convex in \EEE
the second variable, by Ioffe-Olech Lower Semicontinuity Theorem (see \cite[Theorem~2.3.1]{But89})
\begin{equation*}
\begin{split}
|B_1 \sm A_\varepsilon|\, & \C_1(\alpha(x_0)) p(x_0) \colon p(x_0)= \int_{B_1 \sm A_\varepsilon}\C_1(\alpha(x_0)) p(x_0) \colon p(x_0) \,\mathrm{d}y \\&\leq \lim_{k\to \infty}\int_{B_1\BBB \sm\{\beta_k=1\}  \EEE} \hspace{-3em} \C_1(\beta_k) q_k \colon q_k \,\mathrm{d}y=\omega_n \,\frac{\mathrm{d} \mu}{\mathrm{d} \Ln}(x_0)\,.
\end{split}
\end{equation*}
By \eqref{1205211141} it holds that $\lim_{\varepsilon\to 0}\Ln(A_\varepsilon)=0$. Therefore, the above inequality gives \eqref{1105211028} by the arbitrariness of $\varepsilon$.
\medskip
\paragraph*{\textbf{Proof of \eqref{1505211152}.}} 
In this step we use a slicing procedure. We recall
the basic notation: fixed $\xi \in \Sn$, we let
\begin{equation*}\label{eq: vxiy2}
\Pi^\xi:=\{y\in \Rn\colon y\cdot \xi=0\},\quad B^\xi_y:=\{t\in \R\colon y+t\xi \in B\} \ \ \ \text{ for any $y\in \Rn$ and $B\subset \Rn$},
\end{equation*}
and for every function $v\colon B\to  \Rn$ and $t\in B^\xi_y$, let
\begin{equation*}\label{eq: vxiy}
v^\xi_y(t):=v(y+t\xi),\qquad \widehat{v}^\xi_y(t):=v^\xi_y(t)\cdot \xi.
\end{equation*}
By Fubini Theorem it holds that for every $\xi \in \Sn$
\begin{equation}\label{1505211220}
(\alpha_k)\xy \to \alpha\xy,\quad (\hat{u}_k)\xy \to \hat{u}\xy\quad \text{in }L^1(\Omega\xy) \quad\text{for }\hn\text{-a.e.\ }y \in \Pi_\xi.
\end{equation}
Recalling \eqref{C2}, we have that
\begin{equation*}
\begin{split}
C &\geq \liminf_{k\to +\infty} \bigg\{\int_\Omega \Big\{\gamma(\alpha_k) |p_k|^2 + |e_k|^2 + |\nabla \alpha_k|^2 \Big\} \dx + |\EE u_k|(\Omega) \bigg\} \\
& \geq \liminf_{k\to +\infty} \bigg\{\int_\Omega \Big\{\gamma(\alpha_k) |p_k \xi \cdot \xi|^2 + |e_k \xi \cdot \xi|^2 + |\nabla \alpha_k \cdot \xi|^2 \Big\} \dx + |\EE u_k \xi \cdot \xi|(\Omega) \bigg\}\\
& = \liminf_{k\to +\infty}  \int_{\Pi_\xi} (\mathrm{I}_k)\xy \dH(y) \geq  \int_{\Pi_\xi} \liminf_{k\to +\infty}(\mathrm{I}_k)\xy \dH(y)
\end{split}
\end{equation*}
by Fatou's Lemma, where
\begin{equation}\label{1505211227}
(\mathrm{I}_k)\xy:=\int_{\Omega\xy} \Big\{\gamma((\alpha_k)\xy) |(p_k \xi \cdot \xi)\xy|^2 + |(e_k \xi \cdot \xi)\xy|^2 + |\nabla (\alpha_k)\xy|^2 \Big\} \,\mathrm{d}t + |\mathrm{D}(\hat{u}_k)\xy|(\Omega\xy)
\end{equation}
Therefore, we may fix $y$ in a set of full $\hn$-measure of $\Pi_\xi$ and find, in correspondence to $y$, a subsequence $k_j$ (possibly depending on $y$) such that 
\begin{equation*}
\lim_{j\to +\infty} (\mathrm{I}_{k_j})\xy=\liminf_{k\to +\infty}(\mathrm{I}_k)\xy,\quad (\alpha_{k_j})\xy \wto \alpha\xy \quad\text{in }H^1(\Omega\xy),\quad (\hat{u}_{k_j})\xy \wtos \hat{u}\xy \quad\text{in }BV(\Omega\xy).
\end{equation*}
Moreover
\begin{equation*}
(\wt\alpha_{k_j})\xy \wto \wt\alpha\xy \quad\text{uniformly in }\Omega\xy
\end{equation*} 
passing to the continuous representatives of the slices (which are the slices of $\wt \alpha_k$, $\wt \alpha$, the quasicontinuous representatives of $\alpha_k$, $\alpha$).

Let us fix an open set $I$ compactly contained in $\{\wt\alpha\xy >0\}$. By the uniform convergence stated above, we have that $I$ is compactly contained in $\{(\wt\alpha_{k_j})\xy >0\}$, provided $j$ is large enough.
Being $(I_{k_j})\xy$ uniformly bounded in $j$ (and recalling \eqref{propgamma}), this implies that $p_{k_j}\xi \cdot \xi$ are equibounded in $L^2(I)$ with respect to $j$. Then $(\hat{u}_{k_j})\xy$ is equibounded in $H^1(I)$ with respect to $j$ and, by \eqref{1505211220},
\begin{equation*}
(\hat{u}_{k_j})\xy \wto \hat{u}\xy \quad\text{in }H^1(I).
\end{equation*}
Therefore $|\mathrm{D}^s \hat{u}\xy|(I)= |(p^s \xi \cdot \xi)\xy|(I)=0$. By the arbitrariness of $I$, we have $|\mathrm{D}^s \hat{u}\xy|(\{\wt\alpha\xy >0\})= |(p^s \xi \cdot \xi)\xy|(\{\wt\alpha\xy >0\})=0$.

We notice that, setting $B:=\{\wt\alpha>0\}$, the sets $\{\wt\alpha\xy >0\}$ are the slices $B\xy$ of $B$ for $\hn$-a.e.\ $y\in \Pi_\xi$. By the structure theorem for $BD$ functions proven in \cite[Theorem~4.5]{ACD97} we deduce that
\begin{equation*}
|\EE^s u \xi \cdot \xi|(\{\wt\alpha>0\})=|p^s \xi \cdot \xi|(\{\wt\alpha>0\})=0.
\end{equation*}
By the arbitrariness of $\xi \in \Sn$ we conclude \eqref{1505211152}.
\medskip
\paragraph*{\textbf{Proof of \eqref{1008211732}.}} We argue exactly as in the proof of \eqref{1105211028},
 with the only difference that now $\alpha(x_0)=1$. At this stage, for every fixed $\delta>0$ 
\begin{equation}\label{1108211233}
\beta_k \geq \alpha(x_0)-\delta=1-\delta \quad\text{in }B_1\sm A_\varepsilon
\end{equation}
for $k$ large enough.
We have also in this case that 
\begin{equation*}
q_k \wto p(x_0) \quad\text{in } L^2(B_1 \sm A_\varepsilon; \Mnn)\,,
\end{equation*}
and since $\int_{B_1 \sm\{\beta_k=1\}} \C_1(\beta_k) q_k \colon q_k \,\mathrm{d}y$ are equibounded (with $q_k=0$ in $\{\beta_k=1\}$) and $p(x_0)\in \Mnn$ is constant, by \eqref{1108211233} and \eqref{C5} we conclude that $p(x_0)=0$.
\end{proof}
\begin{rem}\label{rem:1708211308}
The proof goes exactly in the same way if a $L^\gamma$ gradient damage term is present, $\gamma>1$, in place of a $L^2$ term. In fact, the weak $W^{1,\gamma}$ convergence implies \eqref{1205211105} as well.
\end{rem}
\begin{rem}\label{rem:1108211011}
Consider the case where \eqref{C1} and \eqref{C5} are replaced by 
\begin{equation}\label{C1'}
 \C_1 \in C([0,1]; Lin(\Mnn;\Mnn)).
\end{equation}
In this case Theorem~\ref{thm:1505212332} still holds. The proof goes as above, without imposing that plastic variables are null where damage variables are 1, and without proving \eqref{1008211732}.
If $\C_1(0) \neq 0$ and $\gamma(0)>0$ in \eqref{eq:C}, then Theorem~\ref{thm:1505212332} follows directly by applying Ioffe-Olech Lower Semicontinuity Theorem (see \cite[Theorem~2.3.1]{But89}) for every $\alpha_k\wto \alpha$ in $H^1(\Omega;[0,1])$ and every $p_k\wto p$ in $\MbDd$.
\end{rem}
  
 \begin{prop}\label{prop:1505212300} 
 The minimisation problem \eqref{1005211029} admits solutions.
 \end{prop}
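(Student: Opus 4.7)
The plan is to apply the direct method of the calculus of variations. The infimum in \eqref{1005211029} is finite, as noted in the paragraph preceding the statement (the functional is finite at the admissible configuration obtained by updating the previous-step data with the increment of the boundary datum); fix a minimising sequence $(\alpha_j,u_j,e_j,p_j)$.

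\smallskip

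The first task is to derive uniform a priori bounds. The constraint $0\leq \alpha_j\leq \alpha_k^{i-1}\leq 1$ together with the gradient term $\|\nabla \alpha_j\|_2^2$ yields $\alpha_j$ bounded in $H^1(\Omega)$. The coercivity \eqref{0302211656} of $\C$ gives $e_j$ bounded in $\Lnn$. For $p_j$, rewrite $-\langle \LL_k^i,u_j\rangle$ via \eqref{1605211221} as $-\langle \varrho(\tki),e_j-\EE\wki\rangle_{L^2}-\int_{\Omega\cup\dod}\varrho(\tki)\colon \mathrm{d}p_j$ up to a constant; combining with the safe-load estimate \eqref{1505212325} applied to $p_j-p_k^{i-1}$ produces a controlling term $\tau_0|p_j-p_k^{i-1}|(\Omega\cup\dod)$, while the pairing with $e_j$ is absorbed into a fraction of $\QQ(e_j)$ via Young's inequality. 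Hence $p_j$ is bounded in $\MbDd$. Finally, $\EE u_j=e_j+p_j$ in $\Omega$ and $p_j\mres \dod=(\wki-u_j)\odot \nu\hn$ control $\EE u_j$ and the trace of $u_j$ on $\dod$ in $L^1$, so \eqref{seminorm} yields $u_j$ bounded in $BD(\Omega)$.

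\smallskip

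Extract subsequences so that $\alpha_j\wto \alpha$ in $H^1(\Omega)$ (hence strongly in $L^2(\Omega)$ and pointwise a.e.), $e_j\wto e$ in $\Lnn$, $p_j\wtos p$ in $\MbDd$, and $u_j\wtos u$ in $BD(\Omega)$. Admissibility passes to the limit: $0\leq \alpha\leq \alpha_k^{i-1}$ holds pointwise a.e., and $(u,e,p)\in \A(\wki)$ by passing to the limit in $\EE u_j=e_j+p_j$ and in the trace relation on $\dod$, after extending by $\wki$ to a neighbourhood $\Omega'\supset\overline\Omega$ with $\Omega'\cap\dom=\dod$. Lower semicontinuity of each term then follows: $\QQ(e)$ by convexity; $D(\alpha)$ by its assumed strong $L^1$ continuity together with the strong $L^1$ convergence of $\alpha_j$; $\|\nabla \alpha\|_2^2$ by weak $L^2$ lower semicontinuity; $\HH(p-p_k^{i-1})$ by Reshetnyak's theorem; and $\tQ(\alpha,p)$ precisely by Theorem~\ref{thm:1505212332}. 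For the load term, rewriting $\langle \LL_k^i,u_j\rangle$ via \eqref{1605211221}, the elastic pairing $\langle \varrho(\tki),e_j-\EE\wki\rangle_{L^2}$ passes to the limit by weak $L^2$ convergence of $e_j$ (since $\varrho(\tki)\in L^n\subset L^2$), while the measure pairing $\int \varrho(\tki)\colon \mathrm{d}p_j$ is handled by approximating $\varrho(\tki)$ by continuous fields and exploiting \eqref{2607210847}--\eqref{2607210848} together with \eqref{2607210753}.

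\smallskip

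The main technical obstacle — lower semicontinuity of the mixed hardening term when $p$ may concentrate on $(n{-}1)$-dimensional sets while $\alpha$ is only in $H^1$ — has already been overcome by Theorem~\ref{thm:1505212332}; the remaining steps are standard direct-method ingredients. The only further delicate point is the joint passage to the limit of plastic dissipation and external load when the safe load $\varrho$ is merely $L^n$ and $K$ is unbounded in the deviatoric direction, which is precisely the reason for imposing the additional hypothesis \eqref{2607210753}.
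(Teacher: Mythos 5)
Your overall strategy -- direct method, a priori bounds from the coercivity of $\QQ$ and the safe-load estimate \eqref{1505212325}, and Theorem~\ref{thm:1505212332} for the lower semicontinuity of the hardening term -- is exactly the paper's route, and most of the proposal is correct.

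The one place where the argument as written does not close is the treatment of the plastic dissipation and the measure pairing with the safe load. You list ``$\HH(p-p_k^{i-1})$ by Reshetnyak's theorem'' as a separate lower semicontinuous item, and then claim that the measure pairing $\int_{\Omega\cup\dod}\varrho(\tki)\colon\mathrm{d}p_j$ is ``handled by approximating $\varrho(\tki)$ by continuous fields and exploiting \eqref{2607210847}--\eqref{2607210848} together with \eqref{2607210753}.'' Taken at face value, this does not work: $\varrho(\tki)$ is only in $L^n$, not $L^\infty$, so $\|\varrho(\tki)-\sigma\|_\infty$ cannot be made small by $\sigma$ continuous, and the approximation error is not controlled by the masses $|p_j|(\Omega\cup\dod)$. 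In fact $\int\varrho\colon\mathrm{d}p_j$ is in general neither upper nor lower semicontinuous under weak$^*$ convergence of $p_j$. Moreover the duality formulas \eqref{2607210847}--\eqref{2607210848} express $\HH$, not the pairing with $\varrho$. The correct step -- which the paper makes explicit by invoking the last part of \cite[Theorem~3.3]{Mora} -- is to treat
\[
\HH(p-p_k^{i-1})-\int_{\Omega\cup\dod}\varrho_k^i\colon\mathrm{d}(p-p_k^{i-1})
\]
as a single functional: writing it via \eqref{2607210848} as a supremum involving $\sigma-\psi_\varepsilon\varrho_k^i$ (with $\sigma\in C_c(\Omega';K)$) and using \eqref{0905211302} and \eqref{2607210753} to handle the residual term, one proves lower semicontinuity of the combined quantity, never of the two pieces separately. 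Your closing paragraph does refer to ``the joint passage to the limit of plastic dissipation and external load,'' so you seem aware of this, but the semicontinuity list in the middle paragraph should be rewritten to make the coupling explicit rather than presenting $\HH$ and the load pairing as independently convergent/semicontinuous terms.
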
 
 \begin{proof}
 By the integration by parts formula \eqref{1605211221}
 we get
 that \eqref{1005211029} is equivalent to
 \begin{equation*}
 \begin{split}
 \min_{0\leq \alpha\leq \alpha_k^{i-1},\, (u,e,p) \in \A(\wki)} \Big\{ &\QQ(e) -  \langle \varrho_k^i, e \rangle +D(\alpha)+\da + \tQ(\alpha,p) 
 \\&+ \HH(p-p_k^{i-1}) - \int_{\Omega\cup \dod} \varrho_k^i \colon \mathrm{d}( p -p_k^{i-1})
 \Big\},
 \end{split}
 \end{equation*}
 since the functionals to minimise differ by constant terms.
 
 By \eqref{0302211656} and Cauchy Inequality
 \begin{equation*}
 \QQ(e) -  \langle \varrho_k^i, e \rangle \geq \frac{\gamma_1}{2} \| e\|_2^2  - \frac{1}{2 \gamma_1} \|\varrho_k^i\|_2^2.
 \end{equation*}
 With \eqref{1505212325} and the constitutive assumptions in Section~\ref{Sec:2}, we deduce that every minimising sequence $(\beta_k, v_k, \eta_k, q_k)$ is bounded in $H^1(\Omega) \times BD(\Omega) \times L^2(\Omega) \times \MbDd$. 
Arguing as in the last part of \cite[Theorem~3.3]{Mora} we see the lower semicontinuity of 
$\HH(p-p_k^{i-1}) -\int_{\Omega\cup \dod} \varrho_k^i\colon \mathrm{d} (p -p_k^{i-1})$ 
with respect to the weak$^*$ convergence of $p \in \MbDd$. The existence of solutions follows from the Direct Method of Calculus of Variations: it is enough to apply Theorem~\ref{thm:1505212332} and the fact that the remaining terms are directly weakly lower semicontinuous in the target spaces of the respective variables.
 \end{proof}
 
From the solutions to the incremental minimisation problems, we define their interpolations in time by setting for every $t\in \ot$
\begin{equation} \label{interp}
\begin{split}
\alpha_k(t):= \aki,\, \,\, \uk(t):=\uki,\, \,\, e_k(t):=e_k^i, \,\,\,    p_k(t):=p_k^i 
\\
\sigma_k(t):= \C(\aki) e_k^i, \,\,\, \LL_k(t):=\LL_k^i, \,\,\,w_k(t):= \wki\,.
\end{split}
\end{equation}
where $i$ is the largest integer such that $\tki\le t$, that is $i$ is the integer part of $\frac{t}{T}k$. 

By definition $t \mapsto \alpha_k(t)$ is nonincreasing, $( u_k(t),e_k(t),p_k(t))\in \A(w_k(t))$ for every $t\in\ot$, and, by \eqref{1005211029} together with the triangle inequality for $\HH$, we obtain the discrete time stability condition
\begin{equation}\label{1905210948}\tag{ST$_k$}
\begin{split}
\QQ(e_k(t))& +D(\alpha_k(t))+\| \nabla \alpha_k(t)\|_2^2 + \tQ(\alpha_k(t),p_k(t))  - \langle \LL_k(t), u_k(t) \rangle \\& \leq  \QQ(\eta)+D(\beta)+\| \nabla \beta\|_2^2 + \tQ(\beta, q)  +\HH(q-p_k(t)) - \langle \LL_k(t), v \rangle
\end{split}
\end{equation}
for every $\beta \leq \alpha_k(t)$, $\beta\geq 0$, $(v,\eta,q) \in \A(w_k(t))$.

Moreover, testing the minimisation problem \eqref{1005211029} with $\alpha_k^{i-1}$, $\big(u_k^{i-1}+(w_k^i-w_k^{i-1}), e_k^{i-1} + \EE(w_k^i-w_k^{i-1}),p_k^{i-1}\big) \in \A(w_k^i)$ we get
\begin{equation}\label{1905211046}
\begin{split}
\QQ(e_k^i)&+D(\alpha_k^i)+ \| \nabla \alpha_k^i\|_2^2 + \tQ(\alpha_k^i,p_k^i) + \HH(p_k^i-p_k^{i-1}) - \langle \LL_k^i, u_k^i \rangle 
\\&\leq  \QQ(e_k^{i-1}) + \langle \sigma_k^i, \EE(w_k^i-w_k^{i-1})\rangle +\gamma_2\| \EE(w_k^i-w_k^{i-1})\|_2^2 + D(\alpha_k^{i-1})+ \| \nabla \alpha_k^{i-1}\|_2^2 
\\& \hspace{1em} + \tQ(\alpha_k^{i-1},p_k^{i-1}) - \langle \LL_k^i, u_k^{i-1} + (w_k^i-w_k^{i-1}) \rangle.
\end{split}
\end{equation}
We manipulate the last term in the inequality above as
\begin{equation}\label{1905211037}
\begin{split}
\langle \LL_k^i, u_k^{i-1} + (w_k^i-w_k^{i-1}) \rangle= &\langle \LL_k^{i-1}, u_k^{i-1} \rangle + \langle \LL_k^i, w_k^i \rangle -\langle \LL_k^{i-1}, w_k^{i-1} \rangle\\&
\hspace{1em} + \int_{t_k^{i-1}}^{t_k^i} \langle \dot{\LL}(s), u_k(s) - w_k(s) \rangle \,\mathrm{d}s,
\end{split}
\end{equation}
since, by our assumptions on $\varrho$, $\dot{\LL}(t)\in BD(\Omega)'$ for a.e.\ $t\in \ot$ and every $t \mapsto \langle \dot{\LL}(t), v(t) \rangle$ is in $L^1(0,T)$ for $v \in L^\infty(0,T;BD(\Omega))$ (cf.\ \cite[Remark~4.1]{Mora}).
Combining \eqref{1905211037} and \eqref{1905211046}, and summing on $i \in \{0,\dots, k\}$, 
we obtain the discrete time energy inequality
\begin{equation}\label{1905211048}\tag{EI$_k$}
\begin{split}
\QQ(e_k(t))& +D(\alpha_k(t))+\| \nabla \alpha_k(t)\|_2^2 + \tQ(\alpha_k(t),p_k(t)) + \V_\HH(p_k;0,t) - \langle \LL_k(t), u_k(t) -w_k(t) \rangle \\
& \leq \QQ(e_0) +D(\alpha_0)+\| \nabla \alpha_0\|_2^2 + \tQ(\alpha_0,p_0) - \langle \LL_0, u_0 -w_0 \rangle +\int_0^{t_k^i} \langle \sigma_k(s), \EE \dot{w}(s) \rangle \,\mathrm{d}s \\& \hspace{1em} - \int_0^{t_k^i} \langle \dot{\LL}(s), u_k(s) - w_k(s) \rangle \,\mathrm{d}s + \delta_k,
\end{split}
\end{equation}
where 
\begin{equation}\label{2305211506}
\delta_k:=\gamma_2 \bigg(\max_{1\leq r \leq k} \int_{t_k^{r-1}}^{t_k^r} \|\EE \dot{w}(s)\|_2 \,\mathrm{d}s \bigg)  \int_{0}^{t_k^i} \|\EE \dot{w}(s)\|_2 \,\mathrm{d}s  \to 0\quad\text{as }k\to +\infty.
\end{equation}
Notice that in \eqref{1905210948} we 
exploit the fact that $\V_\HH(p_k;0,t)=\sum_{j=1}^i \HH(p_k^j-p_k^{j-1})$, $p_k$ being piecewise constant in time.
By the integration by parts formula in \eqref{1905211046}, we have 
\begin{equation*}
\begin{split}
\langle \LL_k^i, u_k^i -u_k^{i-1} - (w_k^i-w_k^{i-1}) \rangle &= \int_{\Omega\cup\dod}\varrho_k^i \colon \mathrm{d}(p_k^i-p_k^{i-1})  + \langle \varrho_k^i, e_k^i - \EE w_k^i \rangle \\&  \hspace{1em} - \langle \varrho_k^{i-1}, e_k^{i-1}- \EE w_k^{i-1} \rangle + \int_{t_k^{i-1}}^{t_k^i} \langle \dot{\varrho}(s), e_k(s) - \EE w_k(s) \rangle \,\mathrm{d}s,
\end{split}
\end{equation*}
so that, summing \eqref{1905211046} over $i$, we may recast \eqref{1905211048} in
 \begin{equation}\label{1905211737}
\begin{split}
\QQ(e_k(t))& - \langle \varrho_k(t), e_k(t)-\EE w_k(t) \rangle +D(\alpha_k(t))+\| \nabla \alpha_k(t)\|_2^2 + \tQ(\alpha_k(t),p_k(t)) \\&
\hspace{1em}+ \sum_{1\leq i \leq k} \big\{ \HH(p_k^i-p_k^{i-1})-\int_{\Omega\cup\dod}\varrho_k^i \colon \mathrm{d}(p_k^i-p_k^{i-1})  \big\} - \langle \LL_k(t), u_k(t) \rangle \\
& \leq \QQ(e_0) - \langle \varrho_0, e_0-\EE w_0 \rangle +D(\alpha_0)+\| \nabla \alpha_0\|_2^2 + \tQ(\alpha_0,p_0) - \langle \LL_0, u_0 \rangle  \\& \hspace{1em} +\int_0^{t_k^i} \langle \sigma_k(s), \EE \dot{w}(s) \rangle \,\mathrm{d}s - \int_0^{t_k^i} \langle \dot{\varrho}(s), e_k(s) - \EE w_k(s) \rangle \,\mathrm{d}s + \delta_k.
\end{split}
\end{equation}
Expressing the discrete time energy inequality as above, it is now readily seen that, arguing as done in Proposition~\ref{prop:1505212300} to prove coercivity (and using the integrability assumptions on $w$, $\varrho$), the a priori bounds
\begin{equation}\label{1905211744}
\sup_{t\in \ot} \Big(\|e_k(t)\|_2 + \|\alpha_k(t)\|_{H^1}\Big)\leq C,\qquad \V(p_k; 0,T) \leq C
\end{equation}
hold, for $C>0$ independent of $k$.
 \section{Passage to the continuous time limit}

This section contains the proof of existence of quasistatic evolutions, namely of Theorem~\ref{thm:2305211857}, and of some properties of these evolutions.
We divide the exposition into subsections, to ease the reading.

\subsection{Compactness}

In view of the a priori bound in \eqref{1905211744} on the variations in time of $p_k$, by a generalized version of the Helly theorem (cf.\ \cite[Lemma~7.2]{Mora}), there are a (not relabeled) subsequence and a function $p\colon \ot \to \MbDd$ with bounded variation on $\ot$ such that
\begin{equation*}
p_k(t) \wtos p(t) \quad\text{in }\MbDd \quad\text{for every }t\in \ot.
\end{equation*}
Moreover, since the functions $\alpha_k\colon \ot \to H^1(\Omega)$ are nonincreasing in time and $\alpha_k(t)$ are all valued in $[0,1]$, by a Helly-type theorem (cf.\ \cite{MalDuc} and \cite[Subsection~4.4]{Cri16}) there exist a (not relabeled) subsequence and a function $\alpha \colon \ot \to H^1(\Omega)$ nonincreasing in time such that
\begin{equation*}
\alpha_k(t) \wto \alpha(t) \quad\text{in }H^1(\Omega) \quad\text{for every }t\in \ot.
\end{equation*}
By \eqref{1905211744}, for every fixed $t\in \ot$ there exists a subsequence $k_j$, possibly depending on $t$, such that
\begin{equation}\label{1208210858}
e_{k_j}(t) \wto \ol e \quad\text{in }\Lnn,\qquad u_{k_j}(t) \wtos \ol u \quad\text{in }BD(\Omega).
\end{equation}

\subsection{Stability condition}

By the following theorem, we deduce that the discrete time stability condition \eqref{1905210948} passes to the limit under the available convergences.
\begin{thm}\label{thm:1905212052}
Let $w_k \in H^1(\Omega;\Rn)$
$\LL_k \in BD(\Omega)'$, 
$\alpha_k\in H^1(\Omega;[0,1])$, 
$(u_k, e_k, p_k)\in \A(w_k)$ such that $w_k \to w_\infty$ in $H^1(\Omega;\Rn)$, $\LL_k \to \LL_\infty$ in $BD(\Omega)'$, $\alpha_k \wto \alpha_\infty$ in $H^1(\Omega)$, $u_k\wtos u_\infty$ in $BD(\Omega)$, $e_k\wto e_\infty$ in $\Lnn$, $p_k\wtos p_\infty \in \MbDd$. Then $(u_\infty, e_\infty, p_\infty)\in \A(w_\infty)$ and, if
\begin{equation}\label{1905212057}
\begin{split}
\QQ(e_k)& +D(\alpha_k)+\| \nabla \alpha_k\|_2^2 + \tQ(\alpha_k,p_k)  - \langle \LL_k, u_k \rangle \\& \leq  \QQ(\eta)+D(\beta)+\| \nabla \beta\|_2^2 + \tQ(\beta, q)  +\HH(q-p_k) - \langle \LL_k, v \rangle
\end{split}
\end{equation}
for every $k$ and $\beta \leq \alpha_k$, $\beta\geq 0$, $(v,\eta,q) \in \A(w_k)$, then
\begin{equation}\label{1905212058}
\begin{split}
\QQ(e_\infty)& +D(\alpha_\infty)+\| \nabla \alpha_\infty\|_2^2 + \tQ(\alpha_\infty,p_\infty)  - \langle \LL_\infty, u_\infty \rangle \\& \leq  \QQ(\eta)+D(\beta)+\| \nabla \beta\|_2^2 + \tQ(\beta, q)  +\HH(q-p_\infty) - \langle \LL_\infty, v \rangle
\end{split}
\end{equation}
for every $\beta \leq \alpha_\infty$, $\beta\geq 0$, $(v,\eta,q) \in \A(w_\infty)$.
\end{thm}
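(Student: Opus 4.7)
The proof of admissibility $(u_\infty, e_\infty, p_\infty) \in \A(w_\infty)$ is routine: the equation $\EE u_k = e_k + p_k$ passes to the limit distributionally, and since $u_k \to u_\infty$ in $L^1(\Omega;\Rn)$ and $\|Eu_k\|_1$ stays bounded, the traces of $u_k$ on $\partial\Omega$ converge to that of $u_\infty$ in a way compatible with $(w_k-u_k)\odot\nu\hn\mres\dod \wtos (w_\infty-u_\infty)\odot\nu\hn\mres\dod$.

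For (1905212058), given a competitor $(\beta,v,\eta,q)$ with $\beta\le\alpha_\infty$, $\beta\ge 0$, $(v,\eta,q)\in\A(w_\infty)$, I may assume the right-hand side is finite, hence $\tQ(\beta,q)<+\infty$ and $\HH(q-p_\infty)<+\infty$; applying Theorem~\ref{thm:1505212332} to (1905212057) also yields $\tQ(\alpha_\infty,p_\infty)<+\infty$. I construct the mutual recovery sequence
\begin{equation*}
\beta_k:=\beta\wedge\alpha_k,\qquad v_k:=v+u_k-u_\infty,\qquad \eta_k:=\eta+e_k-e_\infty,\qquad q_k:=q+p_k-p_\infty,
\end{equation*}
which directly satisfies $(v_k,\eta_k,q_k)\in\A(w_k)$ and $0\le\beta_k\le\alpha_k$. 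The finiteness of $\tQ(\beta_k,q_k)$ follows from the inclusions $\{\wt\beta_k>0\}\subset\{\wt\beta>0\}\cap\{\wt\alpha_k>0\}$ (so the singular parts of $q_k$ vanish there, using $|p_\infty^s|(\{\wt\beta>0\})\le|p_\infty^s|(\{\wt\alpha_\infty>0\})=0$) and $\{\beta_k=1\}\subset\{\beta=1\}\cap\{\alpha_k=1\}$ (so $q_k$ vanishes on $\{\beta_k=1\}$).

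The passage to the limit in (1905212057) tested against $(\beta_k,v_k,\eta_k,q_k)$ handles most terms by soft arguments. The algebraic identity $\QQ(\eta_k)-\QQ(e_k)=\QQ(\eta)-\QQ(e_\infty)+\langle\C(e_k-e_\infty),\eta-e_\infty\rangle$ converges thanks to weak $L^2$ convergence of $e_k$; the term $\langle\LL_k,u_k-v_k\rangle=\langle\LL_k,u_\infty-v\rangle$ converges since $\LL_k\to\LL_\infty$ in $BD(\Omega)'$; and $\HH(q_k-p_k)=\HH(q-p_\infty)$ is constant. Rellich's theorem gives $\alpha_k\to\alpha_\infty$ and $\beta_k\to\beta$ in $L^2$, whence strong continuity of $D$ gives $D(\alpha_k)-D(\beta_k)\to D(\alpha_\infty)-D(\beta)$. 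For the damage gradient, writing $\|\nabla\alpha_k\|_2^2-\|\nabla\beta_k\|_2^2=\int_{\{\beta\le\alpha_k\}}(|\nabla\alpha_k|^2-|\nabla\beta|^2)$ and noting that $\chi_{\{\beta\le\alpha_k\}}\to 1$ a.e.\ (hence in every $L^q$, $q<\infty$), I deduce $\chi_{\{\beta\le\alpha_k\}}\nabla\alpha_k\wto\nabla\alpha_\infty$ in $L^2$, and lower semicontinuity of the $L^2$-norm squared gives the required inequality for the liminf.

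The main obstacle is the hardening term $\tQ(\alpha_k,p_k)-\tQ(\beta_k,q_k)$. The liminf bound $\liminf_k\tQ(\alpha_k,p_k)\ge\tQ(\alpha_\infty,p_\infty)$ comes from Theorem~\ref{thm:1505212332}; the matching $\limsup_k\tQ(\beta_k,q_k)\le\tQ(\beta,q)$ is the real work. Here I would proceed exactly as announced in the introduction, reusing the capacitary-plus-$GSBD$ machinery of Theorem~\ref{thm:1505212332} but without rescaling. Fixing $\varepsilon>0$, Evans' compactness \eqref{1205211105'} combined with Lahti's perimeter refinement gives a set $A_\varepsilon$ with $\mathrm{Per}(A_\varepsilon,\Omega)<C\varepsilon$ on whose complement $\wt\alpha_k\to\wt\alpha_\infty$ uniformly; this forces $\wt\beta_k\to\beta$ uniformly outside $A_\varepsilon\cup\{\wt\beta=0\}$, so that the plastic densities $p_k^a,p_\infty^a$ become equibounded in $L^2$ on $\{\wt\beta\ge\delta\}\setminus A_\varepsilon$ for each $\delta>0$, and the measure-theoretic traces picked up across $\partial^*A_\varepsilon$ are controlled via $GSBD$-compactness applied to $v_k\,\chi_{\Omega\setminus A_\varepsilon}$. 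The upper bound on $\tQ(\beta_k,q_k)$ then follows from $\C_1(\beta_k)\le\C_1(\beta)$, dominated convergence in the regime $\wt\beta_k\to\beta$, and letting $\varepsilon\to 0$; the delicate coupling between the $H^1$-weak convergence of $\alpha_k$, the weak$^*$ convergence of $p_k$, and the uniformity afforded by the capacitary estimate is the crux of the argument.
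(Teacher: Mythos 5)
Your construction of the mutual recovery sequence $\beta_k=\beta\wedge\alpha_k$, $v_k=v+u_k-u_\infty$, $\eta_k=\eta+e_k-e_\infty$, $q_k=q+p_k-p_\infty$ is exactly the one used in the paper, and your treatment of the elastic term, the loading, the dissipation $D$, and the plastic potential $\HH(q_k-p_k)=\HH(q-p_\infty)$ is fine. For the gradient term, the paper uses the modularity identity $\|\nabla\alpha_k\|_2^2-\|\nabla\beta_k\|_2^2=\|\nabla(\beta\vee\alpha_k)\|_2^2-\|\nabla\beta\|_2^2$ together with $\beta\vee\alpha_k\wto\alpha_\infty$ in $H^1(\Omega)$ and weak lower semicontinuity of the norm; your pointwise argument $\chi_{\{\beta\le\alpha_k\}}\to 1$ a.e.\ is not quite justified on $\{\beta=\alpha_\infty\}$, but that is a minor issue.

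The serious gap is in your treatment of the hardening term. You split it as $\liminf_k\tQ(\alpha_k,p_k)\ge\tQ(\alpha_\infty,p_\infty)$ plus the ``matching'' bound $\limsup_k\tQ(\beta_k,q_k)\le\tQ(\beta,q)$, and you then try to prove the $\limsup$ bound by the capacitary/\,$GSBD$ machinery. But this upper bound is simply false in general: $q_k=q-p_\infty+p_k$ contains a copy of $p_k$, which converges only weakly, and $\tQ(\beta_k,\cdot)$ is a convex quadratic functional, hence only lower semicontinuous under weak convergence. A concrete obstruction: take $\beta$ constant in $(0,1)$, $\alpha_k=\alpha_\infty\ge\beta$, $q=p_\infty=0$, and $p_k\wto 0$ in $L^2$ with $\|p_k\|_2$ bounded away from zero. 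Then $\tQ(\beta_k,q_k)=\tQ(\beta,p_k)\not\to 0=\tQ(\beta,q)$. No amount of uniform control on $\beta_k$ away from a small-capacity set changes this: the failure is in the quadratic dependence on the weakly convergent $p_k$, not in the coefficient. The $GSBD$/capacity argument can only yield weak $L^2$ convergence of $p_k$ on good sets, which gives \emph{lower} semicontinuity of the quadratic form, not the upper bound you need.

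What the paper does instead is an algebraic reorganization that you are missing. After inserting the test functions and subtracting $\tQ(\beta_k,p_k)$ from both sides of \eqref{1905212057}, the identity
\begin{equation*}
\tQ(\alpha,p_1)-\tQ(\alpha,p_2)=\langle\C_1(\alpha)(p_1+p_2),p_1-p_2\rangle
\end{equation*}
moves the quadratic-in-$p_k$ content entirely into the functional
\begin{equation*}
\widehat\QQ(\alpha_k,\beta_k,p_k):=\tQ(\alpha_k,p_k)-\tQ(\beta_k,p_k)
\end{equation*}
on the left-hand side, which is \emph{nonnegative} (by monotonicity \eqref{C3}) and \emph{jointly lower semicontinuous} (by the same blow-up/\,$GSBD$ argument as in Theorem~\ref{thm:1505212332}). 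What remains on the right-hand side is $\langle\C_1(\beta_k)(q-p_\infty+2p_k),q-p_\infty\rangle$, which is \emph{linear} in $p_k$; its convergence to $\tQ(\beta,q)-\tQ(\beta,p_\infty)$ is what the capacitary estimate \eqref{2005212331} plus equi-integrability plus the $GSBD$ weak convergence $p_k\chi_{E_{\varepsilon,\delta}}\wto p_\infty\chi_{E_{\varepsilon,\delta}}$ is actually designed to achieve. So the machinery you invoke is correct, but it must be applied to the linearized remainder, not to $\tQ(\beta_k,q_k)$ directly; without the reorganization via \eqref{diffquad} and the nonnegativity of $\widehat\QQ$, your split cannot close.
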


\begin{proof}
By the very same argument of \cite[Lemma~2.1]{Mora}, it holds that $(u_\infty,e_\infty,p_\infty)\in \A(w_\infty)$.

First we fix the test functions in the limit stability problem: $0\leq\beta \leq \alpha_\infty$, and $(v, \eta, q)\in \A(w_\infty)$. Then we consider the following test functions for \eqref{1905212057}:
\begin{align*}
 \beta_k := \beta \wedge \alpha_k,\quad v_k :=v-u_\infty +u_k, \quad \eta_k := \eta-e_\infty +e_k, \quad  q_k := q-p_\infty+p_k.
\end{align*}
It holds that $\beta_k \wto \beta$ and $ \beta \vee \alpha_k \wto \alpha_\infty$ 
in $H^1(\Omega)$,  $v_k\wtos v$ 
in $BD(\Omega)$, $\eta_k\wto \eta$ 
in $\Lnn$, $q_k\wtos q$ 
in $\MbDd$.

We remark that we may assume that all the terms in $\tQ$ in \eqref{1905212058} and that one in the left handside of \eqref{1905212057} are finite: in fact, the left handside in \eqref{1905212057} is finite since the minimum problem has a solution, see Proposition~\ref{prop:1505212300}; the left handside in \eqref{1905212058} is finite by Theorem~\ref{thm:1505212332}; if the right handside in \eqref{1905212058} is infinity there is nothing to prove. Therefore
 \begin{equation}\label{2105211733}
 \begin{split}
 q \in L^2(\{\beta>0\};\Mnn)& \subset L^2(\{\beta_k>0\};\Mnn), \quad q=0\text{ in } \{\beta=1\}\supset \{\beta_k=1\}, \\
  p_\infty  \in L^2(\{\alpha_\infty>0\};\Mnn) &\subset L^2(\{\beta_k>0\};\Mnn),\quad p_\infty=0\text{ in } \{\alpha_\infty=1\}\supset \{\beta_k=1\}, \\
   p_k  \in L^2(\{\alpha_k>0\};\Mnn) &\subset L^2(\{\beta_k>0\};\Mnn), \quad p_k=0\text{ in } \{\alpha_k=1\}\supset \{\beta_k=1\},
  \end{split}
 \end{equation}
 since $\beta_k\leq \beta\leq \alpha_\infty \BBB \leq 1$ and $\beta_k\leq \alpha_k\leq 1$. It follows that also the right handside in \eqref{1905212057} is finite. To ease the reading we omit the indication that the plastic strain is null where the damage variable is 0, when writing the integral defining $\tQ$.
 
Inserting the test functions in  \eqref{1905212057}, we thus obtain the equivalent inequality
\begin{equation}\label{2005210638}
\begin{split}
 D(\alpha_k)&+\| \nabla (\beta \vee \alpha_k)\|_2^2 - \| \nabla \beta\|_2^2 + \tQ(\alpha_k,p_k) - \tQ(\beta_k,p_k)  - \langle \LL_k, u_\infty \rangle \\& \leq \tfrac12 \langle \C (\eta-e_\infty + 2 e_k), \eta -e_\infty \rangle+D(\beta_k) + \langle \C_1(\beta_k) (q-p_\infty +2 p_k) , q-p_\infty \rangle \\& \hspace{1em}+\HH(q-p_\infty) - \langle \LL_k, v \rangle
\end{split}
\end{equation}
by subtracting $\tQ(\beta_k,p_k)$ from both sides, by using the modularity condition 
\[
\| \nabla (\alpha_1 \vee \alpha_2) \|_{2}^{2} +\| \nabla (\alpha_1 \wedge \alpha_2) \|_{2}^{2}     = \| \nabla \alpha_1 \|_{2}^{2} + \| \nabla \alpha_2 \|_{2}^{2}\,,
\]
 for every $\alpha, \beta \in \WW$, and from the fact that
 \begin{equation}\label{diffquad}
 \tQ(\alpha, p_1)-\tQ(\alpha, p_2)=\langle \C_1(\alpha) (p_1+p_2) , p_1-p_2\rangle
 \end{equation}
  for every $\alpha \in \WW$ and $p_1, p_2 \in L^2(\{\alpha>0\};\Mnn)$. 

   In view of the convergences in the hypotheses plus those obtained above for the test functions,
   we have that $\| \nabla \alpha_k\|_2^2\leq \liminf_k \| \nabla (\beta \vee \alpha_k)\|_2^2$, that $D(\alpha_k)$, $D(\beta_k)$, $\langle \LL_k, v \rangle$, $\langle \LL_k, v_\infty \rangle$ converge to $D(\alpha_\infty)$, $D(\beta)$, $\langle \LL_\infty, v \rangle$, $\langle \LL_\infty, v_\infty \rangle$, and that
  \begin{equation}\label{2005210711}
  \lim_{k\to +\infty} \tfrac12\langle \C (\eta-e_\infty + 2 e_k), \eta -e_\infty \rangle = \tfrac12\langle \C (\eta+ e_\infty), \eta -e_\infty \rangle= \QQ(\eta) -\QQ(e_\infty).
  \end{equation}
Thus it lasts to investigate the terms involving $\tQ$. First, it holds that
\begin{equation}\label{2005210721}
\tQ(\alpha_\infty,p_\infty) - \tQ(\beta,p_\infty)\leq\liminf_{k\to +\infty} \Big\{\tQ(\alpha_k,p_k) - \tQ(\beta_k,p_k) \Big\}.
\end{equation}
In fact, the functional $\widehat \QQ(\alpha_1, \alpha_2,q)= \tQ(\alpha_1,q) - \tQ(\alpha_2,q) $ given by
\begin{equation}
\widehat \QQ(\alpha_1, \alpha_2,q):=  
\begin{dcases}
\int_\Omega \big[\C_1(\alpha_1)-\C_1(\alpha_2)\big] q \colon q \dx &\ \text{if }|q^s|(\{\wt\alpha_1>0\})=0,\, q\,\chi_{\{\alpha_1=1\}}=0,\, \alpha_1\geq \alpha_2\\
+\infty &\ \text{otherwise},
\end{dcases}
\end{equation}
assumes nonnegative values by \eqref{C3}, is semicontinuous with respect to the strong $L^1$ convergence of $(\alpha_1, \alpha_2)$, and is convex and lower semicontinuous in $q$. Therefore, by the very same arguments of Theorem~\ref{thm:1505212332} we get that $\widehat \QQ(\alpha_\infty, \beta,p_\infty)\leq \liminf_k \widehat \QQ(\alpha_k, \beta_k,p_k)$, namely \eqref{2005210721}.

Second, we claim that 
\begin{equation}\label{2005210758}
\lim_{k\to +\infty} \langle \C_1(\beta_k) (q-p_\infty +2 p_k) , q-p_\infty \rangle= \langle \C_1(\beta) (q+p_\infty) , q-p_\infty \rangle = \tQ(\beta, q) - \tQ(\beta, p_\infty).
\end{equation}
(We still omit the indication that the plastic strain is null where the damage variable is 0, when writing the integral defining $\tQ$.)
Since $\beta_k \wto \beta$ in $H^1(\Omega)$, arguing as done for \eqref{1205211141} with \cite[Lemma~3.1]{Lah17}, for every $\varepsilon>0$ there is an open set $D_\varepsilon \subset \Omega$ with $\mathrm{Per}(D_\varepsilon,\Omega)+ \Cap_1(D_\varepsilon,\Omega)<C\,\varepsilon$ such that 
\begin{equation}\label{2005212331}
\beta_k \to \beta \quad\text{ uniformly in }\Omega\sm D_\varepsilon.
\end{equation}
We notice that, by \eqref{C2} (cf.\ also the derivation of \eqref{C4} from \eqref{0302211656}), the functions $\C_1(\beta_k) p_k$
are equibounded with respect to $k$ in $\Lnn$. 
In view of \eqref{2105211733}, 
we deduce that the functions $\C_1(\beta_k) (q-p_\infty +2p_k)$ are in  $\Lnn$ and thus
\begin{equation}\label{2005212310}
\C_1(\beta_k) (q-p_\infty +2p_k) \colon (q-p_\infty) \quad\text{are equiintegrable with respect to }k.
\end{equation} 
Since $\beta_k \leq \beta$, recalling \eqref{C2'} it holds that 
\begin{equation}\label{2105211737}
\C_1(\beta_k) (q-p_\infty +2p_k) \colon (q-p_\infty)= \C_1(\beta_k) (q-p_\infty +2p_k) \colon (q-p_\infty) \chi_{\{\beta>0\}}.
\end{equation}
We observe also that $E_\delta:=\{\beta>\delta\}$ are increasing as $\delta$ decreases and $\chi_{E_\delta}\to \chi_{\{\beta>0\}}$ in $L^1(\Omega)$ as $\delta\to 0$. Then
\begin{equation}\label{2005212311}
\lim_{\delta\to 0} \Ln\big(\{\beta>0\} \sm E_\delta \big)=0.
\end{equation}
By \eqref{2005212310}, \eqref{2105211737}, \eqref{2005212311}, and the properties of $D_\varepsilon$, we get
\begin{equation}\label{2005212326}
\lim_{\varepsilon,\delta \to 0} \Big| \langle \C_1(\beta_k) (q-p_\infty +2 p_k) , q-p_\infty \rangle - \int_{E_{\varepsilon,\delta}}  \C_1(\beta_k) (q-p_\infty +2 p_k) \colon (q-p_\infty) \dx  \Big|=0 
\end{equation}
for
\begin{equation*}
E_{\varepsilon,\delta}:=E_\delta\sm D_\varepsilon=\{\beta>\delta\}\sm D_\varepsilon
\end{equation*}
uniformly in $k$.

By \eqref{2005212331} it follows that $\{\beta>\delta\}\sm D_\varepsilon \subset \{\beta_k>\frac{\delta}{2}\}$ for $k$ large enough. 
Then $p_k \chi_{E_{\varepsilon,\delta}}$ are equibounded in $k$ in $\Lnn$. Furthermore, being $\beta\in H^1(\Omega)$, by the Coarea Formula it is not restrictive to assume that $\{\beta> \delta\}$ has finite perimeter (this holds for a.e.\ $\delta$, and we are just interested to arbitrarily small $\delta$) so that $u_k \chi_{E_{\varepsilon,\delta}}$ are equibounded in $GSBD^2(\Omega)$. By assumption, we get
\begin{equation*}
u_k \chi_{E_{\varepsilon,\delta}} \to u_\infty \chi_{E_{\varepsilon,\delta}} \in GSBD^2(\Omega) \quad\Ln\text{-a.e.\ in }\Omega,
\end{equation*}
and this, with $e_k \wto e_\infty$ in $\Lnn$, gives (recall e.g.\ \cite[Theorem~11.3]{DM13}) 
\begin{equation}\label{2005212343}
p_k \chi_{E_{\varepsilon,\delta}} \wto p_\infty \chi_{E_{\varepsilon,\delta}} \quad\text{in } \Lnn.
\end{equation}
By \eqref{2005212331} and \eqref{2005212343}
\begin{equation}\label{2005212347}
\begin{split}
\lim_{k\to +\infty}\int_{E_{\varepsilon,\delta}} \C_1(\beta_k) (q-p_\infty +2 p_k) \colon (q-p_\infty) \dx = \int_{E_{\varepsilon,\delta}}  \C_1(\beta) (q+p_\infty) \colon (q-p_\infty) \dx,
\end{split}
\end{equation}
using also that the integrals above are evaluated outside $\{\beta=1\}$, since $q=0$, $p_\infty=0$ therein, and that $\C_1(\beta_k)$ converge uniformly to $\C_1(\beta)$ on $\{\beta=1\}$ (by \eqref{C1}, \eqref{2005212331}, and since $\beta_k\leq \beta<1$).

We now deduce \eqref{2005210758} by collecting \eqref{2005212326}, \eqref{2005212347}, and the facts that $\lim_{\varepsilon,\delta\to 0} \Ln(\{\beta>0\} \sm E_{\varepsilon,\delta})=0$ and $\C_1(\beta_\infty) (q+p_\infty) \colon (q-p_\infty) \in L^1(\Omega)$.

All in all, by \eqref{2005210711}, \eqref{2005210721}, and \eqref{2005210758} we conclude the proof.
\end{proof}

\begin{rem}\label{rem:1108211021}
Assuming \eqref{C1'} in place of \eqref{C1}, \eqref{C5}, Theorem~\ref{thm:1905212052} holds with the same proof (with obvious simplifications, e.g. there is no need for the second conditions in \eqref{2105211733}). If $\C_1(0)$ is positive definite, we can treat $\tQ$ as done for $Q$ in \eqref{2005210711}, to prove directly \eqref{2005210758}.
\end{rem}

Theorem~\ref{thm:1905212052} allows us to pass to the limit \eqref{1905210948} along the subsequence $k_j$ satisfying \eqref{1208210858}, to obtain for every $t\in\ot$
\begin{equation*}
\begin{split}
\QQ(\ol e)& +D(\alpha(t))+\| \nabla \alpha(t)\|_2^2 + \tQ(\alpha(t),p(t))  - \langle \LL(t), \ol u \rangle \\& \leq  \QQ(\eta)+D(\beta)+\| \nabla \beta\|_2^2 + \tQ(\beta, q)  +\HH(q-p(t)) - \langle \LL(t), v \rangle
\end{split}
\end{equation*}
for every $0\leq\beta \leq \alpha(t)$, $(v,\eta,q) \in \A(w(t))$.
In particular, taking $\beta=\alpha(t)$, it holds that $(\ol u, \ol e)$ minimises
\begin{equation*}
F(t)\colon (v,\eta)\mapsto \QQ(\eta)-\langle \LL(t), v \rangle
\end{equation*}
on the convex set $\wt K(t):=\{(v,\eta)\colon (v,\eta,p(t))\in\A(w(t))\}$. This implies that $(v,\eta)$ is uniquely determined: in fact, if $(v_1,\eta_1)$, $(v_2,\eta_2)$ are different minimisers, then both $v_1 \neq v_2$ and $\eta_1 \neq \eta_2$, by \eqref{1605210915}; by the strict convexity of $\QQ$ and the linearity of $\langle \LL(t), \cdot \rangle$ we would have $F\big(\frac{(v_1,\eta_1)+(v_2,\eta_2)}{2}\big)< \frac12\big(F(v_1,\eta_1) + F(v_2,\eta_2)\big)$, which is a contradiction.

Therefore, setting $u(t):=\ol u$, $e(t):=\ol e$, we obtain that
\begin{equation*}
u_k(t)\wtos u(t)\quad \text{in }BD(\Omega),\qquad e_k(t) \wto e(t) \quad\text{in }\Lnn
\end{equation*}
for the sequence independent of $t\in\ot$. Moreover, for every $t\in\ot$ it holds the stability condition:
\begin{equation}\label{2205210000}\tag{ST}
\begin{split}
\QQ(e(t))& +D(\alpha(t))+\| \nabla \alpha(t)\|_2^2 + \tQ(\alpha(t),p(t))  - \langle \LL(t), u(t) \rangle \\& \leq  \QQ(\eta)+D(\beta)+\| \nabla \beta\|_2^2 + \tQ(\beta, q)  +\HH(q-p(t)) - \langle \LL_k(t), v \rangle
\end{split}
\end{equation}
for every $\beta \leq \alpha(t)$, $(v,\eta,q)\in \A(w(t))$.

\subsection{Weak continuity a.e.\ in time}

We notice that Theorem~\ref{thm:1905212052} allows us also to infer that
\begin{equation}\label{2305211732}
e(s) \wto e(t) \quad\text{in }\Lnn,\ u(s) \wtos u(t) \quad\text{in }BD(\Omega)\quad\text{for }s\to t, \text{ for a.e.\ }t\in \ot.
\end{equation}
In fact, first we have that
\begin{equation}\label{2305211828}
\alpha(s) \wto \alpha(t) \quad\text{in }H^1(\Omega),\ p(s) \wtos p(t) \quad\text{in }\MbDd\quad\text{for }s\to t, \text{ for a.e.\ }t\in \ot,
\end{equation}
which hold indeed for any $t$ except at most countable many: 
the weak continuity of $\alpha$ follows by \cite[Lemma~A.2]{Cri16} plus the uniform boundedness of $\alpha(t)$ in $H^1(\Omega)$, the weak$^*$ continuity of $p$ follows from the fact that
$p$ has bounded variation with values in $\MbDd$. Then, if $t \in \ot$ is a point of weak continuity for both $\alpha$ and $p$, using the strong continuity in time of the loading and Theorem~\ref{thm:1905212052} we get that the weak limits of $e(s)$, $u(s)$ minimises $F(t)$ on $\wt K(t)$, and so they coincide with $e(t)$, $u(t)$.

\subsection{Energy balance}

The discrete time energy inequality \eqref{1905211048} passes to the limit into
\begin{equation}\label{2205210006}\tag{EI}
\begin{split}
\QQ(e(t))& +D(\alpha(t))+\| \nabla \alpha(t)\|_2^2 + \tQ(\alpha(t),p(t)) + \V_\HH(p;0,t) - \langle \LL(t), u(t) -w(t) \rangle \\
& \leq \QQ(e_0) +D(\alpha_0)+\| \nabla \alpha_0\|_2^2 + \tQ(\alpha_0,p_0) - \langle \LL_0, u_0 -w_0 \rangle +\int_0^{t} \langle \sigma(s), \EE \dot{w}(s) \rangle \,\mathrm{d}s \\& \hspace{1em} - \int_0^{t} \langle \dot{\LL}(s), u(s) - w(s) \rangle \,\mathrm{d}s.
\end{split}
\end{equation}
All the terms in the left-hand side are lower semicontinuous with respect to the convergences deriving from the boundedness of the functional and from the hypotheses on the loading (recall Theorem~\ref{thm:1505212332} for $\tQ$ and the lower semicontinuity of the plastic dissipation, which is supremum of lower semicontinuous functionals). As for the right-hand side, the integrals pass to the limit by Dominated Convergence Theorem in the time interval $[0,t]$ (notice that in the discrete inequalities the time interval is $[0, t_k^{i}]$, being $i(t,k)$ the integer part of $\frac{t}{T}k$, with $t_k^{i}\to t$ as $k\to +\infty$).

Let us prove the opposite energy inequality. Fix $t \in \ot$ and let $(s_k^i)_{0\le i\le k}$ be a sequence of subdivisions of the interval $[0,t]$ satisfying
\begin{eqnarray*}
& 0=s_k^0<s_k^1<\dots<s_k^{k-1}<s_k^k=t\,,\\
&\displaystyle
\lim_{k\to\infty}\,
\max_{1\le i\le k} (s_k^i-s_k^{i-1})= 0\,.
\end{eqnarray*}
{}For every $i=1,\ldots,k$ let $u:=u(s_k^i)-w(s_k^i)+ 
w(s_k^{i-1})$ and $e:=e(s_k^i)-Ew(s_k^i)+ 
Ew(s_k^{i-1})$. Since $\alpha(s_k^i)\leq\alpha(s_k^{i-1})$ and $(u, e, p(s_k^i))\in \A(w(s_k^{i-1}))$, by the
global stability \eqref{2205210000} we have
\begin{equation}\label{2305211324}
\begin{split}
\QQ(e(s_k^{i-1}))&+D(\alpha(s_k^{i-1}))+ \| \nabla \alpha(s_k^{i-1})\|_2^2 + \tQ(\alpha(s_k^{i-1}),p(s_k^{i-1})) + \HH(p(s_k^i)-p(s_k^{i-1})) \\& \hspace{1em}- \langle \LL(s_k^{i-1}), u(s_k^{i-1}) \rangle 
\\&\leq  \QQ(e(s_k^i)) - \langle \sigma(s_k^i), \EE(w(s_k^i)-w(s_k^{i-1}))\rangle +\gamma_2\| \EE(w(s_k^i)-w(s_k^{i-1}))\|_2^2 + D(\alpha(s_k^i))\\& 
\hspace{1em} + \| \nabla \alpha(s_k^i)\|_2^2 
 + \tQ(\alpha(s_k^i),p(s_k^i)) - \langle \LL(s_k^{i-1}), u(s_k^i) - (w(s_k^i)-w(s_k^{i-1})) \rangle.
\end{split}
\end{equation}
Rewriting the last term in the right hand side as
\begin{equation}\label{2305211430}
\begin{split}
\langle \LL(s_k^{i-1}), u(s_k^i) & - (w(s_k^i)-w(s_k^{i-1})) \rangle= \langle \LL(s_k^i), u(s_k^i) \rangle - \langle \LL(s_k^i), w(s_k^i) \rangle \\&
\hspace{1em} + \langle \LL(s_k^{i-1}), w(s_k^{i-1}) \rangle - \int_{s_k^{i-1}}^{s_k^i} \langle \dot{\LL}(s), u(s_k^i) - w(s_k^i) \rangle \,\mathrm{d}s,
\end{split}
\end{equation}
and summing \eqref{2305211324} over $i$, we obtain 
\begin{equation}\label{2305211454}
\begin{split}
\QQ(e_0)&+D(\alpha_0)+ \| \nabla \alpha_0\|_2^2 + \tQ(\alpha_0,p_0) + \V_\HH(p;0,t)- \langle \LL_0, u_0-w_0 \rangle 
\\&\leq  \QQ(e(t)) - \int_0^t \langle \ol \sigma_k(s), \EE(\dot{w}(s))\rangle\,\mathrm{d}s  + \delta'_k + D(\alpha(t))\\& 
\hspace{1em} + \| \nabla \alpha(t)\|_2^2 
 + \tQ(\alpha(t),p(t)) - \int_0^t \langle \dot{\LL}(s), \ol u_k(s) - \ol w_k(s) \rangle \, \mathrm{d}s.
\end{split}
\end{equation}
where $\delta_k'$ is defined similarly to $\delta_k$ (cf.\ \eqref{2305211506}) and we set $\ol u_k(s):= u(s_k^i)$, $\ol \sigma_k(s):=\sigma(s_k^i)$, $\ol w_k(s):=w(s_k^i)$ for $i$ the smallest index such that $s\leq s_k^i$.
By \eqref{2305211732}, \eqref{2305211828}, and the uniform bounds \eqref{1905211744} we get that 
\[
\langle \ol \sigma_k(s), \EE(\dot{w}(s))\rangle \to \langle \sigma(s), \EE(\dot{w}(s))\rangle,\]
\[
\langle \dot{\LL}(s), \ol u_k(s) - \ol w_k(s) \rangle \to \langle \dot{\LL}(s),  u(s) -  w(s) \rangle
\]
as $k\to +\infty$ for a.e.\ $s\in \ot$. Again by \eqref{1905211744}, we can apply Dominated Convergence Theorem to pass to the limit in \eqref{2305211454}.
Thus we deduce the energy inequality opposite to \eqref{2205210006}, and so
\begin{equation*}
\begin{split}
\QQ(e(t))& +D(\alpha(t))+\| \nabla \alpha(t)\|_2^2 + \tQ(\alpha(t),p(t)) + \V_\HH(p;0,t) - \langle \LL(t), u(t) -w(t) \rangle \\
& = \QQ(e_0) +D(\alpha_0)+\| \nabla \alpha_0\|_2^2 + \tQ(\alpha_0,p_0) - \langle \LL_0, u_0 -w_0 \rangle +\int_0^{t} \langle \sigma(s), \EE \dot{w}(s) \rangle \,\mathrm{d}s \\& \hspace{1em} - \int_0^{t} \langle \dot{\LL}(s), u(s) - w(s) \rangle \,\mathrm{d}s.
\end{split}
\end{equation*}
In view of the integration by parts
\begin{equation*}
\int_0^t \{ \langle \dot{\LL}(s), w(s) \rangle + \langle \LL(s), \dot{w}(s) \rangle  \} \,\mathrm{d}s = \langle \LL(t), w(t) \rangle - \langle \LL_0, w_0 \rangle
\end{equation*}
the energy balance above is equivalent to  
\begin{equation}\label{2305211841}\tag{EB}
\begin{split}
\QQ(e(t))& +D(\alpha(t))+\| \nabla \alpha(t)\|_2^2 + \tQ(\alpha(t),p(t)) + \V_\HH(p;0,t) - \langle \LL(t), u(t) \rangle \\
& = \QQ(e_0) +D(\alpha_0)+\| \nabla \alpha_0\|_2^2 + \tQ(\alpha_0,p_0) - \langle \LL_0, u_0 \rangle +\int_0^{t} \langle \sigma(s), \EE \dot{w}(s) \rangle \,\mathrm{d}s \\& \hspace{1em} - \int_0^{t} \{ \langle \dot{\LL}(s), u(s) \rangle + \langle \LL(s), \dot{w}(s) \rangle \} \,\mathrm{d}s.
\end{split}
\end{equation}
\subsection{Strong continuity a.e.\ in time for $\alpha$ and $e$} 
Evaluating the energy balance at two times $s$ and $t$, with $s<t$, we get (it is immediate to see that $\V_{\HH}(p; 0, t)=\V_{\HH}(p; 0, s)+\V_{\HH}(p; s, t)$)
\begin{equation*}
\begin{split}
\QQ(e(t))& +D(\alpha(t))+\| \nabla \alpha(t)\|_2^2 + \tQ(\alpha(t),p(t)) + \V_\HH(p;s,t) - \langle \LL(t), u(t) \rangle \\
& = \QQ(e(s)) +D(\alpha(s))+\| \nabla \alpha(s)\|_2^2 + \tQ(\alpha(s),p(s)) - \langle \LL(s), u(s) \rangle +\int_s^t \langle \sigma(\tau), \EE \dot{w}(\tau) \rangle \,\mathrm{d}\tau \\& \hspace{1em} - \int_s^t \{ \langle \dot{\LL}(\tau), u(\tau) \rangle + \langle \LL(\tau), \dot{w}(\tau) \rangle \} \,\mathrm{d}\tau.
\end{split}
\end{equation*} 
Let us fix $t\in [0,T]$ satisfying three sets of conditions: first, to be a time of weak or weak$^*$ continuity of the variables $\alpha$, $u$, $e$, $p$; second, to be a continuity point for the increasing function $s\mapsto \V_{\HH}(p; 0,s)$; third, to be a Lebesgue point for the time derivatives of the external loadings $\LL$, $w$.
Notice that any $t \in [0,T]$ except countable many satisfies the above conditions.

Then, as $s\to t$, we obtain that
\begin{equation}\label{0708211053}
\begin{split}
\QQ(e(t))& +D(\alpha(t))+\| \nabla \alpha(t)\|_2^2 + \tQ(\alpha(t),p(t))  \\
& = \lim_{s\to t} \Big\{\QQ(e(s)) +D(\alpha(s))+\| \nabla \alpha(s)\|_2^2 + \tQ(\alpha(s),p(s))\Big\}.
\end{split}
\end{equation} 
Due to the weak or weak$^*$ continuity of $\alpha$, $u$, $e$, $p$ in $t$, and by Theorem~\ref{thm:1505212332}, each of the four terms $\QQ(e(\cdot))$, $D(\alpha(\cdot))$, $\| \nabla \alpha(\cdot)\|_2^2$, $\tQ(\alpha(\cdot),p(\cdot))$ is lower semicontinuous as $s\to t$. By \eqref{0708211053} we deduce that these four terms are actually continuous as $s\to t$. Using the convexity of $\QQ$ and the fact that $\alpha(s) \to \alpha(t)$ in $L^1(\Omega)$, we get that $e(s) \to e(t)$ in $\Lnn$ and $\alpha(s) \to \alpha(t)$ in $H^1(\Omega)$.
Further, 
\begin{equation*}
\tQ(\alpha(s),p(s)) \to \tQ(\alpha(t),p(t)),
\end{equation*}
which can be read as $\sqrt{C_1(\alpha(s))}p(s) \to \sqrt{C_1(\alpha(t))}p(t)$ in $L^2(\Omega; \Mnn)$.

This concludes the proof of Theorem~\ref{thm:2305211857}.

\subsection{From weak to strong evolutions under further regularity}

In this subsection we link the notion of evolution in Theorem~\ref{thm:2305211857}, which has an integral formulation, with that one in \cite{KazMar18} (with the introduction of damage gradient, see Remark~3.1 therein), corresponding to a differential formulation for any time.
We refer to the latter as \emph{strong formulation}, whose solutions are \emph{strong evolutions}. In fact, it could be seen that strong evolutions satisfy also the conditions in Theorem~\ref{thm:2305211857}.
The converse is true only under additional regularity assumptions. We first describe the differential properties that could be deduced without further assumptions. 

\begin{prop}\label{prop:0708211246}
For every evolution satisfying the conditions of Theorem~\ref{thm:2305211857}, under the corresponding hypotheses, for every $t \in [0,T]$ it holds that 
\begin{equation*}
-\diver \sigma(t)=f(t),\qquad [\sigma(t) \nu]=g(t),\qquad \sigma(t)-2 \C_1(\alpha(t))p(t) \in K \ \text{a.e.\ in }\Omega,
\end{equation*}
for $\sigma(t):=\C e(t)$, where $ [\sigma(t) \nu] \in H^{-\frac12}(\Omega)$ is defined as done for $\varrho$ after \eqref{0905211302}.
\end{prop}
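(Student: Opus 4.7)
The plan is to derive all three identities from the global stability condition (qs1) of Theorem~\ref{thm:2305211857} by inserting admissible one-parameter families of competitors and exploiting the positive $1$-homogeneity of $\HH$, in the spirit of the standard derivation of Euler--Lagrange conditions for incremental plasticity problems; the degeneracy $\C_1(0)=0$ and the blow-up $\lim_{\alpha\to 1}\C_1(\alpha)=+\infty$ will dictate where such perturbations can be supported.

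For the equilibrium equations I would fix $\beta=\alpha(t)$, $q=p(t)$, and for $\varphi\in H^1(\Omega;\Rn)$ vanishing on $\dod$ choose $v=u(t)+\epsilon\varphi$, $\eta=e(t)+\epsilon\EE\varphi$. The triple $(v,\eta,p(t))$ lies in $\A(w(t))$ by \eqref{1605210915}, and $\HH(q-p(t))=0$, so dividing (qs1) by $\epsilon$ and letting $\epsilon\to 0^{\pm}$ gives $\langle\sigma(t),\EE\varphi\rangle=\langle\LL(t),\varphi\rangle$ for every such $\varphi$. Specialising to $\varphi\in C_c^\infty(\Omega;\Rn)$ yields $-\diver\sigma(t)=f(t)$ distributionally in $\Omega$, hence a.e.\ by \eqref{0302212216}; for general $\varphi$, the definition of the distributional trace $[\sigma(t)\nu]$ recalled after \eqref{0905211302} combined with the form \eqref{0905211246} of $\LL(t)$ gives $\langle[\sigma(t)\nu],\varphi\rangle=\int_{\don}g(t)\cdot\varphi\dH$, that is $[\sigma(t)\nu]=g(t)$ on $\don$.

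For the yield condition I would instead keep $\beta=\alpha(t)$, $v=u(t)$ and perturb only the plastic strain: for $\xi\in L^2(\Omega;\Mnn)$ supported in $\{\alpha(t)\leq 1-\delta'\}$ (with $\delta'>0$) and $\epsilon>0$, set $q=p(t)+\epsilon\xi\,\Ln$, $\eta=e(t)-\epsilon\xi$. The triple $(u(t),\eta,q)$ lies in $\A(w(t))$ since $\EE u(t)=e(t)+p(t)$ is preserved and the absolutely continuous perturbation does not affect the singular $\hn\mres\dod$ part; the support restriction ensures $\tQ(\alpha(t),q)<+\infty$, because $\C_1$ is bounded on $\{\alpha(t)\leq 1-\delta'\}$ and the concentration conditions in the definition of $\tQ$ are unaffected. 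Expanding the quadratic parts of $\QQ$ and $\tQ$ and using $\HH(\epsilon\xi\,\Ln)=\epsilon\int_\Omega H(\xi)\dx$ (positive $1$-homogeneity of $H$), after dividing by $\epsilon>0$ and letting $\epsilon\to 0^+$ the stability condition reduces to
\[
\int_\Omega\bigl(\sigma(t)-2\,\C_1(\alpha(t))p(t)\bigr)\colon\xi\dx\ \leq\ \int_\Omega H(\xi)\dx.
\]
Localising with $\xi=\zeta\,\mathbf{1}_{B_\rho(x_0)}$ at Lebesgue points $x_0\in\{\alpha(t)<1-\delta'\}$, then letting $\rho\to 0$ and $\delta'\to 0$, I obtain $\bigl(\sigma(t)-2\C_1(\alpha(t))p(t)\bigr)\colon\zeta\leq H(\zeta)$ for every $\zeta\in\Mnn$ at a.e.\ $x\in\{\alpha(t)<1\}$; by the Fenchel duality $K=\{y\in\Mnn\colon y\colon\zeta\leq H(\zeta)\ \forall\zeta\}$ this is equivalent to $\sigma(t)-2\C_1(\alpha(t))p(t)\in K$ a.e.\ on $\{\alpha(t)<1\}$. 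On the sound set $\{\alpha(t)=1\}$, where $p(t)=0$, the claim collapses to $\sigma(t)\in K$ and can be recovered by a parallel variation in which $\beta=\alpha(t)-\delta\phi$ is chosen with a smooth cutoff $\phi\geq 0$, thereby enlarging the admissible support of $\xi$ across the sound set. The main obstacle is precisely this joint compatibility of $\A(w(t))$-admissibility and $\tQ$-finiteness for the perturbation, which forces the nested limits $\rho,\delta'\to 0$ and the separate treatment of the fully sound set $\{\alpha(t)=1\}$.
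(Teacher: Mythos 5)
Your approach is the same as the paper's: fix $\beta=\alpha(t)$, vary $(v,\eta)$ along $(\varphi,\EE\varphi,0)$ with $\varphi$ vanishing on $\dod$ to get equilibrium and the Neumann trace, then vary along $(0,\eta,-\eta)$ and use positive one-homogeneity of $\HH$ together with $\partial H(0)=K$ (equivalently your Fenchel-duality characterisation) and localisation to get the yield condition. The paper takes $\eta\in L^2(\Omega;\Mnn)$ directly, asserts $\C_1(\alpha(t))p(t)\in L^2$ and passes to the limit in the $\tQ$-difference quotient by dominated convergence, then chooses $\eta=\chi_B\xi$; your restriction to $\xi$ supported in $\{\alpha(t)\le 1-\delta'\}$ and the nested limits $\rho,\delta'\to 0$ is a more careful route to the same localisation and is, if anything, a cleaner way to guarantee $\tQ$-finiteness of the competitor.

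The step that does not work as described is the proposed treatment of the sound set $\{\alpha(t)=1\}$. Decreasing $\beta$ to $\alpha(t)-\delta\phi$ does enlarge $\{\beta<1\}$, but it simultaneously brings the damage-side terms $D(\beta)$, $\|\nabla\beta\|_2^2$, $\tQ(\beta,q)$ into play, and these do not decouple from the plastic perturbation: the resulting inequality, expanded to first order in $\delta$ and $\epsilon$, mixes what becomes the damage flow rule (compare Lemma~\ref{le:0808211330}) with the yield condition, so you cannot simply read off $\sigma(t)\in K$. In fact, because any admissible competitor with $\tQ$ finite must have plastic strain vanishing on $\{\beta=1\}$, the plastic perturbation $\epsilon\xi$ with $\xi$ supported across $\{\alpha(t)=1\}$ is not available without a genuinely two-parameter argument that would need its own justification. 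Note that the paper's choice $\eta=\chi_B\xi$ is in effect subject to exactly the same constraint (for $\tQ(\alpha(t),p(t)-\varepsilon\eta)$ to be finite one needs $B\subset\{\alpha(t)<1\}$), so the paper too only obtains the yield condition a.e.\ on $\{\alpha(t)<1\}$; on the sound set the kinematic hardening diverges, $p(t)=0$, and the constraint is to be read as vacuous rather than as an unconditional $\sigma(t)\in K$. The remedy is therefore to drop your final step, not to repair it.
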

\begin{proof}
For every $t\in [0,T]$, fixing $\alpha(t)=\beta$ in the minimum problem (qs1) we obtain that $(u(t), e(t), p(t))\in \A(w(t))$ solves the minimisation problem
 \begin{equation}\label{0708211331}
\begin{split}
\min_{ (v,\eta,q)\in \A(w(t)) } \Big\{ \QQ(\eta) + \tQ(\alpha(t), q)  +\HH(q-p(t)) - \langle \LL(t), v \rangle \Big\}.
\end{split}
\end{equation}
Therefore, we are in a situation analogous to that of \cite[Lemma~3.6]{Mora}, except for the presence of $\tQ$ and the fact that $K\subset \Mnn$ (instead of $K \subset \MD$).
The derivation of the first two conditions $-\diver \sigma(t)=f(t)$, $[\sigma(t) \nu]=g(t)$ goes exactly as in \cite[Lemma~3.6]{Mora}, since the test directions are $(v, \EE v, 0)$, with $v \in H^1(\Omega;\Rn)$, so that the contribution of $\tQ$ disappears.

In order to show the last condition, we test \eqref{0708211331} with $(u(t), e(t), p(t)) + \varepsilon (0, \eta, -\eta)$, with $\eta \in \Lnn$, and derive with respect to $\varepsilon$. 
Here we have the further term $\tQ$ with respect to \cite[Lemma~3.6]{Mora}: it holds that
\[
\lim_{\varepsilon\to 0}\frac{\tQ(\alpha(t), p(t) -\varepsilon \eta)- \tQ(\alpha(t), p(t))}{\varepsilon}
= -2 \int_{\Omega} \C_1(\alpha(t)) p(t) \, \eta \,\dx, \]
by Dominated Convergence Theorem, since both $\C_1(\alpha(t)) p(t)$ and $\eta \in \Lnn$.
At this stage, it is enough to argue as in the last part of the proof of \cite[Proposition~3.5]{Mora}, by choosing $\eta(x)= 1_B(x) \xi$, $\xi \in \Mnn$, since $\partial H(0)=K$. 
\end{proof}

The remaining properties, consisting in flow rules for the damage and the plasticity variable, may require some strong further regularity in time, to guarantee global differentiability in time: even if this regularity could be hard to prove, we list it to confirm that the evolutions whose existence has been proven here
are the right weak evolutions for the model in \cite{KazMar18}. We start with a technical lemma.

\begin{lemma}\label{le:0808211330}
Besides the assumptions in Theorem~\ref{thm:2305211857}, let us assume that $d \in C^1([0,1])$, $\C_1 \in C^1([0,1); Lin(\Mnn\times\Mnn))$ with $|\C'_1(\beta)|\leq C\,\C_1(\beta)$ for some constant $C>0$ and every $\beta\in[0,1]$. Let $\alpha$, $u$, $e$, $p$ be an evolution according to Theorem~\ref{thm:2305211857}. Then, for every $\beta \in L^\infty(\Omega)$ such that $\beta\leq 0$ and $\beta =0$ if $\alpha=0$, and for every $t \in [0,T]$, it holds that
\begin{equation*}
\begin{split}
\partial_\alpha \En(\alpha(t), e(t), p(t))[\beta]= 
\langle d'(\alpha(t)), \beta \rangle + \langle \nabla \alpha(t), \nabla \beta \rangle +  \int_{\Omega\sm\{\alpha(t)=1\}} \hspace{-1em} \C_1'(\alpha(t)) \beta \,p(t)\colon p(t) \dx \geq 0,
\end{split}
\end{equation*}
for $\En(\alpha(t), e(t), p(t)):= \QQ(e(t))+ D(\alpha(t)) + \| \nabla \alpha(t)\|_2^2 +\tQ(\alpha(t), p(t))$ and
\begin{equation*}
\partial_\alpha \En(\alpha(t), e(t), p(t))[\beta]:= \lim_{\varepsilon\to 0^+} \frac{\En(\alpha(t)+\varepsilon \beta, e(t), p(t))-\En(\alpha(t), e(t), p(t))}{\varepsilon}.
\end{equation*}
\end{lemma}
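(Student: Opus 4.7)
The plan is to use the global stability condition (qs1) at time $t$ with a small downward damage perturbation, and to compute the right-derivative of each contribution to $\En$ separately, invoking the additional $C^1$ regularity and the growth bound $|\C_1'(\beta)| \leq C\,\C_1(\beta)$. For $\varepsilon > 0$ small, consider $\alpha_\varepsilon := (\alpha(t) + \varepsilon \beta) \vee 0$. Since $\beta \leq 0$, $\alpha(t) \leq 1$, and $\beta = 0$ on $\{\alpha(t) = 0\}$, one has $0 \leq \alpha_\varepsilon \leq \alpha(t)$, so $\alpha_\varepsilon$ is an admissible competitor for the damage in (qs1). The truncation is active only on $T_\varepsilon := \{0 < \alpha(t) < \varepsilon \|\beta\|_\infty\}$, whose Lebesgue measure tends to zero as $\varepsilon \to 0^+$. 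Testing (qs1) with $(\alpha_\varepsilon, u(t), e(t), p(t))$ and using $\HH(p(t) - p(t)) = 0$, after cancelling identical terms we obtain
$$D(\alpha(t)) + \|\nabla\alpha(t)\|_2^2 + \tQ(\alpha(t), p(t)) \leq D(\alpha_\varepsilon) + \|\nabla\alpha_\varepsilon\|_2^2 + \tQ(\alpha_\varepsilon, p(t)).$$
Dividing by $\varepsilon$ and letting $\varepsilon \to 0^+$, the announced nonnegativity will follow once the right-derivative is identified.

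The damage potential and gradient terms are treated by direct pointwise computations. The $C^1$ regularity of $d$ on $[0,1]$ (interpreting $D(\alpha) = \int d(\alpha)\dx$) and the boundedness of $\beta$ yield, by dominated convergence, $\varepsilon^{-1}\bigl(D(\alpha(t)+\varepsilon\beta) - D(\alpha(t))\bigr) \to \langle d'(\alpha(t)), \beta\rangle$; the contribution from $T_\varepsilon$ is controlled by $\|d'\|_\infty \|\beta\|_\infty\,\Ln(T_\varepsilon) = o(1)$. For the quadratic gradient term, the identity $\nabla\alpha_\varepsilon = \chi_{\{\alpha(t)+\varepsilon\beta \geq 0\}}(\nabla\alpha(t) + \varepsilon\nabla\beta)$ a.e.\ produces an expansion whose dominant order-$\varepsilon$ contribution is $2\langle\nabla\alpha(t),\nabla\beta\rangle$, the $\varepsilon^2$-part being negligible; the truncation error is absorbed using $\nabla\alpha(t) = 0$ a.e.\ on $\{\alpha(t) = 0\}$ (Stampacchia's lemma) combined with absolute continuity of the integral of $|\nabla\alpha(t)|^2$ on the vanishing sets $T_\varepsilon$.

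The main obstacle is the derivative of the hardening term $\tQ$, for which the growth bound on $\C_1'$ is essential. Pointwise on $\{0 < \alpha(t) < 1\}$, the mean value theorem produces $\theta_\varepsilon(x) \in [\alpha(t)(x) + \varepsilon\beta(x), \alpha(t)(x)]$ with
$$\frac{\C_1(\alpha(t) + \varepsilon\beta) - \C_1(\alpha(t))}{\varepsilon}\, p(t) \colon p(t) = \C_1'(\theta_\varepsilon)\, \beta\, p(t) \colon p(t),$$
which converges pointwise to $\C_1'(\alpha(t))\, \beta\, p(t) \colon p(t)$. The bound $|\C_1'(\theta_\varepsilon)| \leq C\,\C_1(\theta_\varepsilon)$ and the monotonicity \eqref{C3}, which gives $\C_1(\theta_\varepsilon) \xi \colon \xi \leq \C_1(\alpha(t)) \xi \colon \xi$ for every $\xi \in \Mnn$ since $\theta_\varepsilon \leq \alpha(t)$, yield the $\varepsilon$-uniform majorant $C\|\beta\|_\infty\,\C_1(\alpha(t))\, p(t)\colon p(t)$, which is integrable thanks to $\tQ(\alpha(t), p(t)) < +\infty$. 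On $\{\alpha(t) = 0\}$ the integrand vanishes because $\beta = 0$ there, and on $\{\alpha(t) = 1\}$ because $p(t) = 0$ from the very definition of $\tQ$; the truncation set $T_\varepsilon$ is controlled by the same integrable majorant since $\Ln(T_\varepsilon) \to 0$. Dominated convergence delivers the $\tQ$-contribution in the stated formula, and summing the three limits together with the inequality from the first paragraph gives both the identity for $\partial_\alpha \En(\alpha(t), e(t), p(t))[\beta]$ and its nonnegativity.
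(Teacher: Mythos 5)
Your proposal follows the same route as the paper: nonnegativity is extracted from (qs1), the $\tQ$-term is differentiated by dominated convergence using the mean value theorem, the growth bound $|\C_1'(\theta)|\leq C\,\C_1(\theta)$, and the monotonicity~\eqref{C3} to produce the $\varepsilon$-uniform integrable majorant, and the remaining terms are treated by direct computation. The key technical step — the majorant for the $\tQ$-term — is correctly reproduced.

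However, your treatment of the gradient term via the truncated competitor $\alpha_\varepsilon=(\alpha(t)+\varepsilon\beta)\vee 0$ contains a genuine gap. Writing $T_\varepsilon:=\{\alpha(t)>0,\ \alpha(t)+\varepsilon\beta\le 0\}$, the difference quotient of $\|\nabla\alpha_\varepsilon\|_2^2$ produces the error
\[
-\frac{1}{\varepsilon}\int_{T_\varepsilon}|\nabla\alpha(t)+\varepsilon\nabla\beta|^2\dx,
\]
and you claim this vanishes ``by absolute continuity of the integral.'' Absolute continuity only gives $\int_{T_\varepsilon}|\nabla\alpha(t)|^2\dx=o(1)$ as $\Ln(T_\varepsilon)\to 0$; it does not give the needed rate $o(\varepsilon)$. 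On $T_\varepsilon$ the pointwise integrand $-\varepsilon^{-1}|\nabla\alpha(t)|^2$ is not bounded uniformly in $\varepsilon$, so dominated convergence is unavailable there, and in general this term need not vanish (e.g.\ $\alpha(t)\sim|x|$ near a zero set, $\beta$ a nonzero constant away from $\{\alpha(t)=0\}$, gives an order-one contribution). This is in contrast to the $D$-term, where the difference quotient on $T_\varepsilon$ is genuinely bounded by $\|d'\|_\infty\|\beta\|_\infty$ and your argument is correct.

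The issue can be sidestepped by observing that only $\C_1$ requires the argument to be truncated (it is defined on $[0,1)$ only), while $D$ and $\|\nabla\cdot\|_2^2$ are defined for any real-valued $H^1\cap L^\infty$ field; one can then use $\|\nabla(\alpha(t)+\varepsilon\beta)^+\|_2^2\le\|\nabla(\alpha(t)+\varepsilon\beta)\|_2^2$ merely to pass from the stability inequality, and differentiate the untruncated $\|\nabla(\alpha(t)+\varepsilon\beta)\|_2^2$ directly, which gives the clean expansion $2\varepsilon\langle\nabla\alpha(t),\nabla\beta\rangle+\varepsilon^2\|\nabla\beta\|_2^2$ with no error term. (Note also that this produces the factor $2\langle\nabla\alpha(t),\nabla\beta\rangle$, not $\langle\nabla\alpha(t),\nabla\beta\rangle$ as in the statement; your computation has the $2$ but you do not flag the mismatch.)
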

\begin{proof}
Since $\beta \leq 0$, the stability condition (qs1) gives that (notice that all the terms depending on $\alpha(t)$ are included in $\En$), for $\varepsilon>0$,
\begin{equation}\label{1208212021}
\frac{\En(\alpha(t)+\varepsilon \beta, e(t), p(t))-\En(\alpha(t), e(t), p(t))}{\varepsilon} \geq 0.
\end{equation}
In order to pass to the limit above, notice that $\frac{\mathrm{d}}{\mathrm{d}\varepsilon}\big |_{\varepsilon=0^+} \C_1((\alpha(t) + \varepsilon\beta)^+) (x)= \C_1'(\alpha(t)) \beta(x)$ for a.e.\ $x \in \Omega$ such that $\alpha(t)(x)<1$, since this holds if $\alpha(t)(x)=0$, being $\beta=0$ if $\alpha=0$, and also if $\alpha(t)(x)>0$, since $\beta \in L^\infty(\Omega)$ and $\varepsilon$ tends to 0. If $\alpha(t)(x)=1$, then $p(t)(x)=0$ and $[\C_1((\alpha(t) + \varepsilon\beta)^+) p(t) \colon p(t)] (x)=[\C_1(\alpha(t)) p(t) \colon p(t)] (x)=0$. Therefore \eqref{1208212021} holds true, and the integrals defining $\tQ$ are restricted to $\{\alpha(t)<1\}$.

For a.e.\ $x\in \{\alpha(t)<1\}$ it holds that $\varepsilon^{-1}[\C_1(\alpha(t) +\varepsilon \beta)^+)(x)-\C_1(\alpha(t))(x)]=\C'_1(\alpha(t)+\varepsilon' \beta)\leq \C_1(\alpha(t))(x)$ for some $\varepsilon'\in (0,\varepsilon)$ and $\varepsilon$ small enough, using Intermediate Value Theorem, the fact that $\beta \in L^\infty(\Omega)$, \eqref{C3}, and the further assumptions on $\C_1$.
Therefore, we can pass to the limit by Dominated Convergence Theorem, to differentiate the term in $\tQ$.
  The derivation of the remaining two terms is straightforward.
\end{proof}
\begin{rem}\label{rem:1308210846}
The assumptions on $\C_1$ in Lemma~\ref{le:0808211330} are satisfied, for instance, if either $\C_1 \in C^1([0,1]; Lin(\Mnn\times\Mnn))$ or $\C_1$ diverges in 1 with an exponential growth. Unfortunately, the power law considered in \cite{KazMar18} does not satisfy the assumptions. Anyway, other conditions allowing to compute directional derivatives w.r.t.\ the damage variable could be found also in this case, for instance concerning the regularity of $\alpha(t)$, $p(t)$. We remark again that this last part of the paper aims to convince that, under further regularity or reasonable constitutive assumptions, we recover a strong evolution.
\end{rem}

In the next proposition we derive the flow rules, provided the evolution is regular enough.
\begin{prop}\label{prop:0808211721}
Let $\alpha$, $u$, $e$, $p$ be an evolution according to Theorem~\ref{thm:2305211857}. Besides the assumptions in Theorem~\ref{thm:2305211857} and in Lemma~\ref{le:0808211330}, 
let us assume 
that $\alpha$, $u$, $e$, $p$ are absolutely continuous with values into $H^1 \cap L^\infty$ (we mean with respect to both norms), $BD(\Omega)$, $\Lnn$, $\MbDd$, respectively. Moreover, assume for a.e.\ $t\in [0,T]$ that $\sigma(t)$ is integrable with respect to $\dot{p}(t)$ and that it holds the integration by parts formula
\begin{equation}\label{0808211726}
\langle \sigma(t), \dot{e}(t)-\EE \dot{w}(t) \rangle + \langle \LL(t), \dot{w}(t)-\dot{u}(t) \rangle + \int_{\Omega\cup \dod} \sigma(t) \colon \mathrm{d}\dot{p}(t)=0.
\end{equation}
Then for a.e.\ $t\in [0,T]$ the following flow rules hold:
\begin{equation}\label{0808211759}
\HH(\dot{p}(t))=\int_{\Omega\cup \dod} \sigma(t) \colon \mathrm{d}\dot{p}(t)\quad\text{and}\quad \partial_\alpha \En(\alpha(t), e(t), p(t))[\dot{\alpha}(t)]=0.
\end{equation}
\end{prop}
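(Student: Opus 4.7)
The strategy is to differentiate the energy balance (qs2) at a.e.\ $t$, insert the integration-by-parts identity \eqref{0808211726}, and then read off the two flow rules from the equality cases of three one-sided estimates whose sum is forced to vanish.

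Under the assumed absolute continuity, applying the chain rule to $\En(\alpha,e,p) := \QQ(e) + D(\alpha) + \|\nabla\alpha\|_2^2 + \tQ(\alpha,p)$ — with differentiation of $\tQ$ in $\alpha$ justified by Dominated Convergence using $|\C_1'|\le C\,\C_1$ and $\dot\alpha\in L^\infty$, exactly as in Lemma~\ref{le:0808211330} — yields for a.e.\ $t$
\[
\frac{d}{dt}\En(\alpha(t),e(t),p(t)) = \partial_\alpha\En[\dot\alpha(t)] + \langle\sigma(t),\dot e(t)\rangle + 2\int_{\Omega}\C_1(\alpha(t))\,p(t):\dot p(t)\,\dx.
\]
Differentiating (qs2) in $t$, expanding $\frac{d}{dt}\langle\LL,u\rangle = \langle\dot\LL,u\rangle + \langle\LL,\dot u\rangle$, and substituting $\langle\sigma,\dot e - \EE\dot w\rangle = \langle\LL,\dot u - \dot w\rangle - \int_{\Omega\cup\dod}\sigma:\mathrm{d}\dot p$ from \eqref{0808211726}, I arrive at
\[
\partial_\alpha\En[\dot\alpha(t)] + \Bigl(\tfrac{d}{dt}\V_\HH(p;0,t) - \HH(\dot p(t))\Bigr) + \Bigl(\HH(\dot p(t)) + 2\int_\Omega\C_1(\alpha)p:\dot p\,\dx - \int_{\Omega\cup\dod}\sigma:\mathrm{d}\dot p\Bigr) = 0.
\]

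I would then check that each summand on the left is nonnegative. First, $\partial_\alpha\En[\dot\alpha(t)]\ge 0$: the test function $\beta = \dot\alpha(t)$ is $\le 0$ by (qs0) and vanishes on $\{\alpha(t)=0\}$ (since $\alpha\ge 0$ is nonincreasing, once $\alpha$ reaches 0 at a point it stays 0 there), so Lemma~\ref{le:0808211330} applies. Second, from $\V_\HH(p;s,t)\ge\HH(p(t)-p(s))$ (the very definition \eqref{diss}), dividing by $t-s$ and letting $s\to t$ using the positive $1$-homogeneity and weak$^*$ lower semicontinuity of $\HH$ on the difference quotients $\frac{p(t)-p(s)}{t-s}\wtos\dot p(t)$ in $\MbDd$ (well defined by $p\in AC([0,T];\MbDd)$) yields $\frac{d}{dt}\V_\HH(p;0,t)\ge\HH(\dot p(t))$ for a.e.\ $t$. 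Third, Proposition~\ref{prop:0708211246} gives $\sigma(t)-2\C_1(\alpha(t))p(t)\in K$ a.e.\ in $\Omega$ with the analogous constraint on the trace on $\dod$, so the pointwise inequality $H(\dot p(t))\ge(\sigma(t)-2\C_1(\alpha(t))p(t)):\dot p(t)$, integrated against $|\dot p(t)|$ (licit by the $\sigma$-integrability with respect to $\dot p(t)$ and expressible via the duality \eqref{2607210848}), makes the third bracket $\ge 0$. Since three nonnegative quantities sum to zero, each vanishes: the first yields the damage flow rule $\partial_\alpha\En[\dot\alpha(t)] = 0$, while the second and third together give $\frac{d}{dt}\V_\HH(p;0,t) = \HH(\dot p(t))$ and the plasticity flow rule \eqref{0808211759}, the hardening back-stress $2\C_1(\alpha)p$ being absorbed into the duality pairing with $\dot p(t)$.

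The main technical obstacle I anticipate is the second bracket: identifying $\dot p(t)$ rigorously as the weak$^*$ limit in $\MbDd$ of the difference quotients, and then applying weak$^*$ lower semicontinuity of $\HH$ to these quotients as $s\to t$, requires care even granted the absolute continuity of $p$. A secondary subtle point is the third bracket, where $\dot p(t)$ may concentrate on $\dod$ and the pointwise inequality must be integrated consistently across bulk and boundary — precisely the role of the assumed integrability of $\sigma$ with respect to $\dot p(t)$, combined with the boundary trace provided by Proposition~\ref{prop:0708211246}, and the approximation machinery of \eqref{2607210847}.
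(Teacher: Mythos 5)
Your strategy coincides with the paper's: differentiate the energy balance (qs2) at a.e.\ $t$, use the integration-by-parts identity \eqref{0808211726}, and conclude from the fact that the resulting identity is a sum of one-signed terms, each of which must then vanish (by Lemma~\ref{le:0808211330} for the damage part, and by the constraint $\sigma(t)-2\C_1(\alpha(t))p(t)\in K$ from Proposition~\ref{prop:0708211246} together with the duality formula~\eqref{2607210848} for the plasticity part). Your three-term split differs from the paper's two-term one only because the paper asserts $\tfrac{d}{dt}\V_\HH(p;0,t)=\HH(\dot p(t))$ directly for a.e.\ $t$ (a standard fact for $p$ absolutely continuous with values in $\MbDd$, cf.\ \cite[Theorem~7.1]{Mora}), so your second bracket is identically zero; that is a harmless refinement.

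The important point is that your computation of $\frac{d}{dt}\tQ(\alpha(t),p(t))$ is \emph{more careful} than the paper's: differentiating $\tQ$ in its second argument contributes the back-stress term $2\int_\Omega\C_1(\alpha(t))p(t)\colon\dot p(t)\,\dx$, which you retain but which is absent from the paper's identity \eqref{0808211756}. That term is not optional. Without it, nonnegativity of the plastic bracket $\HH(\dot p(t))-\int_{\Omega\cup\dod}\sigma(t)\colon\mathrm{d}\dot p(t)$ would require $\sigma(t)\in K$ a.e., whereas Proposition~\ref{prop:0708211246} only yields $\sigma(t)-2\C_1(\alpha(t))p(t)\in K$; your extra term is precisely what makes the nonnegativity argument via~\eqref{2607210848} consistent with that constraint. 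So your derivation is actually sounder than the paper's at this step.

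However, you should then be honest about where this leads: the flow rule that emerges from your third bracket vanishing is
\[
\HH(\dot p(t))=\int_{\Omega\cup\dod}\sigma(t)\colon\mathrm{d}\dot p(t)-2\int_\Omega\C_1(\alpha(t))p(t)\colon\dot p(t)\,\dx,
\]
i.e.\ the duality is with the effective (or relative) stress $\sigma-2\C_1(\alpha)p$, not with $\sigma$ itself as written in~\eqref{0808211759}. Your closing remark that ``the hardening back-stress $2\C_1(\alpha)p$ [is] absorbed into the duality pairing with $\dot p(t)$'' is not a valid reconciliation: the pairing $\int\sigma\colon\mathrm{d}\dot p$ does not absorb a separate $L^2$ term; the two expressions genuinely differ unless $\int\C_1(\alpha)p\colon\dot p\,\dx=0$, for which there is no reason in general. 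The right conclusion is that you have derived a corrected form of the plastic flow rule, and that the statement/proof in the paper carries a typographical error (either the back-stress term was omitted from~\eqref{0808211756}, or~\eqref{0808211759} should read with $\sigma-2\C_1(\alpha)p$ in place of $\sigma$). Saying so explicitly would turn your hesitant final sentence into the sharpest observation in the proof.
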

\begin{proof}
By the assumptions we made on the evolution, we have that $(\dot{u}(t), \dot{e}(t), \dot{p}(t)) \in \A(\dot{w}(t))$ for a.e.\ $t$ (cf.\ \cite[Lemma~5.5]{Mora}) and that
we may differentiate in time the energy balance condition (qs2), to get
\begin{equation}\label{0808211756}
\langle \sigma(t), \dot{e}(t)-\EE \dot{w}(t) \rangle + \langle \LL(t), \dot{w}(t)-\dot{u}(t) \rangle + \HH(\dot{p}(t)) + \partial_\alpha \En(\alpha(t), e(t), p(t))[\dot{\alpha}(t)]=0.
\end{equation}
In fact, notice that $\dot{\alpha}(t)=0$ if $\alpha(t)=0$ where $\dot{\alpha}(t)=0$ exists, since $\alpha(s)=0$ for $s\geq t$. In view of (the last equality in) \eqref{2607210848} and by Lemma~\ref{le:0808211330}, the left handside of \eqref{0808211756} is the sum of two nonnegative terms. Therefore \eqref{0808211756} implies \eqref{0808211759}. 
\end{proof}

\begin{rem}
The condition \eqref{0808211726} is the analogue of \eqref{1605211221}. It holds under additional regularity either for $\sigma(t)$, e.g.\ the same regularity required on $\varrho(t)$, recalling that $(\dot{u}(t), \dot{e}(t), \dot{p}(t)) \in \A(\dot{w}(t))$, or for $\dot{u}(t)$. For instance, in the Dirichlet loading case it is enough to have either $\sigma(t) \in L^n(\Omega;\Mnn)$ or $\dot{u}(t) \in L^2(\Omega;\Rn)$, see \cite[Theorem~6.4]{BabMor15}. 
 We do not go further in detail about these conditions since we are already in the strong assumption that the evolution is absolutely continuous in time. Moreover, notice that the plastic flow rule implies that $\mathbf{H}(\dot{p}(t))=\sigma\colon \dot{p}(t)$ in $\MbDd$, by \eqref{2607210847}.
\end{rem}

\noindent {\bf Acknowledgments.} {The author wishes to thank Stefano Vidoli for 
 fruitful 
discussions.}

\bigskip


\begin{thebibliography}{10}

\bibitem{AMV}
{\sc R.~Alessi, J.-J. Marigo, and S.~Vidoli}, {\em Gradient damage models
  coupled with plasticity and nucleation of cohesive cracks}, Arch. Ration.
  Mech. Anal., 214 (2014), pp.~575--615.

\bibitem{Amb95}
{\sc L.~Ambrosio}, {\em Minimizing movements}, Rend. Accad. Naz. Sci. XL Mem.
  Mat. Appl. (5), 19 (1995), pp.~1773--1799.

\bibitem{ACD97}
{\sc L.~Ambrosio, A.~Coscia, and G.~Dal~Maso}, {\em Fine properties of
  functions with bounded deformation}, Arch. Rational Mech. Anal., 139 (1997),
  pp.~201--238.

\bibitem{AFP}
{\sc L.~Ambrosio, N.~Fusco, and D.~Pallara}, {\em Functions of bounded
  variation and free discontinuity problems}, Oxford Mathematical Monographs,
  The Clarendon Press, Oxford University Press, New York, 2000.

\bibitem{AndBamMar86}
{\sc S.~Andrieux, Y.~Bamberger, and J.-J. Marigo}, {\em Un modèle de matériau
  microfissuré pour les bétons et les roches}, J. Mécanique Théorique
  Appliquée, 5 (1986), pp.~471–--513.

\bibitem{BabFraMor12}
{\sc J.-F. Babadjian, G.~Francfort, and M.~Mora}, {\em Quasistatic evolution in
  non-associative plasticity - the cap model}, SIAM J. Math. Anal., 44 (2012),
  pp.~245--292.

\bibitem{BabMor15}
{\sc J.-F. Babadjian and M.~G. Mora}, {\em Approximation of dynamic and
  quasi-static evolution problems in elasto-plasticity by cap models}, Quart.
  Appl. Math., 73 (2015), pp.~265--316.

\bibitem{BouMieRou09}
{\sc G.~Bouchitt{\'e}, A.~Mielke, and T.~Roub{\'{\i}}{\v{c}}ek}, {\em A
  complete-damage problem at small strains}, Z. Angew. Math. Phys., 60 (2009),
  pp.~205--236.

\bibitem{Bre}
{\sc H.~Brezis}, {\em Opérateurs Maximaux Monotones et Semi-groupes de
  Contractions dans les Espaces de Hilbert}, North-Holland, Amsterdam-London;
  American Elsevier, New York, 1973.

\bibitem{But89}
{\sc G.~Buttazzo}, {\em Semicontinuity, relaxation and integral representation
  in the calculus of variations}, vol.~207 of Pitman Research Notes in
  Mathematics Series, Longman Scientific \& Technical, Harlow; copublished in
  the United States with John Wiley \& Sons, Inc., New York, 1989.

\bibitem{CC21JEMS}
{\sc A.~Chambolle and V.~Crismale}, {\em Compactness and lower semicontinuity
  in {$GSBD$}}, J. Eur. Math. Soc. (JEMS), 23 (2021), pp.~701--719.

\bibitem{Cri16}
{\sc V.~Crismale}, {\em Globally stable quasistatic evolution for a coupled
  elastoplastic-damage model}, ESAIM Control Optim. Calc. Var., 22 (2016),
  pp.~883--912.

\bibitem{Cri17}
\leavevmode\vrule height 2pt depth -1.6pt width 23pt, {\em Globally stable
  quasistatic evolution for strain gradient plasticity coupled with damage},
  Ann. Mat. Pura Appl. (4), 196 (2017), pp.~641--685.

\bibitem{CriLaz16}
{\sc V.~Crismale and G.~Lazzaroni}, {\em Viscous approximation of quasistatic
  evolutions for a coupled elastoplastic-damage model}, Calc. Var. Partial
  Differential Equations, 55 (2016), pp.~Art. 17, 54.

\bibitem{CriOrl18}
{\sc V.~Crismale and G.~Orlando}, {\em A {R}eshetnyak-type lower semicontinuity
  result for linearised elasto-plasticity coupled with damage in {$W^{1,n}$}},
  NoDEA Nonlinear Differential Equations Appl., 25 (2018), pp.~Art. 16, 20.

\bibitem{CriOrl20}
\leavevmode\vrule height 2pt depth -1.6pt width 23pt, {\em A lower
  semicontinuity result for linearised elasto-plasticity coupled with damage in
  {$W^{1,\gamma}$}, {$\gamma>1$}}, Mathematics in Engineering, 2 (2020),
  pp.~101--118.

\bibitem{CriRos21}
{\sc V.~Crismale and R.~Rossi}, {\em Balanced viscosity solutions to a
  rate-independent coupled elasto-plastic damage system}, SIAM J. Math. Anal.,
  53 (2021), pp.~3420--3492.

\bibitem{KM3}
{\sc S.~Cuvilliez, I.~Djouadi, S.~Raude, and R.~Fernandes}, {\em {An
  elastoviscoplastic constitutive model for geomaterials: application to
  hydromechanical modelling of claystone response to drift excavation}},
  Comput. Geotech., 85 (2017), pp.~321--340.

\bibitem{DM13}
{\sc G.~Dal~Maso}, {\em Generalised functions of bounded deformation}, J. Eur.
  Math. Soc. (JEMS), 15 (2013), pp.~1943--1997.

\bibitem{Mora}
{\sc G.~{Dal Maso}, A.~DeSimone, and M.~G. Mora}, {\em {Quasistatic evolution
  problems for linearly elastic-perfectly plastic materials}}, Arch. Ration.
  Mech. Anal., 180 (2006), pp.~237--291.

\bibitem{DMDSSol10}
{\sc G.~Dal~Maso, A.~DeSimone, and F.~Solombrino}, {\em {Quasistatic evolution
  for Cam-Clay plasticity: a weak formulation via viscoplastic regularization
  and time rescaling}}, Calc. Var. Partial Differ. Equ., 40 (2011),
  pp.~125--181.

\bibitem{DavRouSte19}
{\sc E.~Davoli, T.~Roub\'{\i}\v{c}ek, and U.~Stefanelli}, {\em Dynamic perfect
  plasticity and damage in viscoelastic solids.}, Zeit. Angew. Math. Mech., 99
  (2019), p.~e201800161.

\bibitem{DemTem84}
{\sc F.~Demengel and R.~Temam}, {\em Convex functions of a measure and
  applications}, Indiana Univ. Math. J., 33 (1984), pp.~673--709.

\bibitem{DemTem86}
\leavevmode\vrule height 2pt depth -1.6pt width 23pt, {\em Convex function of a
  measure: the unbounded case}, in F{ERMAT} days 85: mathematics for
  optimization ({T}oulouse, 1985), vol.~129 of North-Holland Math. Stud.,
  North-Holland, Amsterdam, 1986, pp.~103--134.

\bibitem{MalDuc}
{\sc M.~Ducho\v{n} and P.~Mali\v{c}k\'y}, {\em {A Helly theorem for functions
  with values in metric spaces}}, Tatra Mt. Math. Publ., 44 (2009),
  pp.~159--168.

\bibitem{KM2}
{\sc E.~Eberhardt}, {\em {The Hoek-Brown failure criterion}}, Rock Mech. Rock
  Eng., 45 (2012), p.~981–988.

\bibitem{EfeMie06}
{\sc M.~Efendiev and A.~Mielke}, {\em On the rate--independent limit of systems
  with dry friction and small viscosity}, J. Convex Analysis, 13 (2006),
  pp.~151--167.

\bibitem{Eva90}
{\sc L.~C. Evans}, {\em Weak convergence methods for nonlinear partial
  differential equations}, vol.~74 of CBMS Regional Conference Series in
  Mathematics, Published for the Conference Board of the Mathematical Sciences,
  Washington, DC; by the American Mathematical Society, Providence, RI, 1990.

\bibitem{EvaGar}
{\sc L.~C. Evans and R.~F. Gariepy}, {\em Measure theory and fine properties of
  functions}, Studies in Advanced Mathematics, CRC Press, Boca Raton, FL, 1992.

\bibitem{FraMor17}
{\sc G.~Francfort and M.~G. Mora}, {\em Quasistatic evolution in
  non-associative plasticity revisited}, Calc. Var. Partial Differential
  Equations, 57 (2018), pp.~Art. 11, 57.

\bibitem{FriPerSol21}
{\sc M.~Friedrich, M.~Perugini, and F.~Solombrino}, {\em Lower semicontinuity
  for functionals defined on piecewise rigid functions and on {$GSBD$}}, J.
  Funct. Anal., 280 (2021), p.~Art. 108929.

\bibitem{GofSer}
{\sc C.~Goffman and J.~Serrin}, {\em {Sublinear functions of measures and
  variational integrals}}, Duke Math. J., 31 (1964), pp.~159--178.

\bibitem{HeiKra15}
{\sc C.~Heinemann and C.~Kraus}, {\em Complete damage in linear elastic
  materials: modeling, weak formulation and existence results}, Calc. Var.
  Partial Differential Equations, 54 (2015), pp.~217--250.

\bibitem{HeiKilMar}
{\sc J.~Heinonen, T.~Kilpel\"{a}inen, and O.~Martio}, {\em Nonlinear potential
  theory of degenerate elliptic equations}, Dover Publications, Inc., Mineola,
  NY, 2006.
\newblock Unabridged republication of the 1993 original.

\bibitem{KM1}
{\sc E.~Hoek and E.~Brown}, {\em The hoek–brown failure criterion: a 1988
  update}, in Proceedings of the 15th Canadian Rock Mechanics Symposium,
  University of Toronto, Dept. of Civil Engineering, J.~Curran, ed., Toronto,
  Canada, 1988, pp.~31--38.

\bibitem{KneNeg17}
{\sc D.~Knees and M.~Negri}, {\em Convergence of alternate minimization schemes
  for phase field fracture and damage}, M$^3$AS Math.\ Models Methods Appl.
  Sci., 27 (2017), pp.~1743--1794.

\bibitem{KRZ13a}
{\sc D.~Knees, R.~Rossi, and C.~Zanini}, {\em A vanishing viscosity approach to
  a rate-independent damage model}, Math. Models Methods Appl. Sci., 23 (2013),
  pp.~565--616.

\bibitem{Lah17}
{\sc P.~Lahti}, {\em A notion of fine continuity for {BV} functions on metric
  spaces}, Potential Anal., 46 (2017), pp.~279--294.

\bibitem{KazMar18}
{\sc J.-J. Marigo and K.~Kazymyrenko}, {\em A micromechanical inspired model
  for the coupled to damage elasto-plastic behavior of geomaterials under
  compression}, Mechanics \& Industry, 20 (2019), p.~105.

\bibitem{MSC}
{\sc H.~Matthies, G.~Strang, and E.~Christiansen}, {\em The saddle point of a
  differential program}, in Energy Methods in Finite Element Analysis, Z.~O.
  Glowinski~R., Rodin~E., ed., Wiley, New York, 1979, pp.~309--318.

\bibitem{Mie11}
{\sc A.~Mielke}, {\em Complete-damage evolution based on energies and
  stresses}, Discrete Contin.\ Dyn.\ Syst.\ Ser.\ S, 4 (2011), pp.~423--439.

\bibitem{MieRosSav13}
{\sc A.~Mielke, R.~Rossi, and G.~Savar{\'e}}, {\em Balanced viscosity ({BV})
  solutions to infinite-dimensional rate-independent systems}, J. Eur. Math.
  Soc. (JEMS), 18 (2016), pp.~2107--2165.

\bibitem{MieRou06}
{\sc A.~Mielke and T.~Roub{\'i}{\v c}ek}, {\em Rate-independent damage
  processes in nonlinear elasticity.}, M$^3$AS Math.\ Models Methods Appl.
  Sci., 16 (2006), pp.~177--209.

\bibitem{MieRou}
{\sc A.~Mielke and T.~Roub{\'{\i}}{\v{c}}ek}, {\em Rate-independent systems.
  Theory and application}, vol.~193 of Applied Mathematical Sciences, Springer,
  New York, 2015.

\bibitem{MieRouZem07}
{\sc A.~Mielke, T.~Roub{\'i\v c}ek, and Zeman}, {\em Complete damage in elastic
  and viscoelastic media and its energetics}, Computer Methods in Applied
  Mechanics and Engineering,  (2010), pp.~1242--1253.

\bibitem{MieThe99MMRI}
{\sc A.~Mielke and F.~Theil}, {\em A mathematical model for rate-independent
  phase transformations with hysteresis}, in Proceedings of the Workshop on
  ``Models of Continuum Mechanics in Analysis and Engineering'', H.-D. Alber,
  R.~Balean, and R.~Farwig, eds., Aachen, 1999, Shaker-Verlag, pp.~117--129.

\bibitem{MieThe04RIHM}
\leavevmode\vrule height 2pt depth -1.6pt width 23pt, {\em On rate-independent
  hysteresis models}, NoDEA Nonlinear Differential Equations Appl., 11 (2004),
  pp.~151--189.

\bibitem{CRASII}
{\sc K.~Pham and J.-J. Marigo}, {\em Approche variationnelle de
  l’endommagement: {II}. {L}es modèles à gradient}, Comptes Rendus
  Mécanique, 338(4) (2010), pp.~199--206.

\bibitem{Rud}
{\sc W.~Rudin}, {\em Real and Complex Analysis}, McGraw-Hill, New York, 1966.

\bibitem{Suq81}
{\sc P.-M. Suquet}, {\em Sur les \'equations de la plasticit\'e: existence et
  r\'egularit\'e des solutions}, J. M\'ecanique, 20 (1981), pp.~3--39.

\bibitem{Tem}
{\sc R.~Temam}, {\em Mathematical problems in plasticity}, Gauthier-Villars,
  Paris, 1985. Translation of Problèmes mathématiques en plasticité.
  Gauthier-Villars, Paris, 1983.

\bibitem{TemStr}
{\sc R.~Temam and G.~Strang}, {\em {Duality and relaxation in the variational
  problem of plasticity}}, J. Mécanique, 19 (1980), pp.~493--527.

\bibitem{Tho13}
{\sc M.~Thomas}, {\em Quasistatic damage evolution with spatial
  {BV}-regularization}, Discrete Contin. Dyn. Syst. Ser. S, 6 (2013),
  pp.~235--255.

\bibitem{UllAleWamDegFra20}
{\sc J.~Ulloa, R.~Alessi, J.~Wambacq, G.~Degrande, and S.~Francois}, {\em {On
  the variational modeling of non-associative plasticity}}, International
  Journal of Solids and Structures, 217-218 (2020), p.~272–296.

\bibitem{UllWamAleSamDegFra21}
{\sc J.~Ulloa, J.~Wambacq, R.~Alessi, E.~Sameniego, G.~Degrande, and
  S.~Francois}, {\em {A micromechanics-based variational phase-field model for
  fracture in geomaterials with brittle-tensile and compressive-ductile
  behavior}}, Preprint 2021.

\bibitem{ZhuShaKon11}
{\sc Q.~Zhu, J.~Shao, and D.~Kondo}, {\em {A micromechanics-based thermodynamic
  formulation of isotropic damage with unilateral and friction effects}}, Eur.
  J. Mech. A, 30 (2011), p.~316–325.

\end{thebibliography}
\end{document}